\def\hbar{\bar{h}}
\def\Rarr#1{\buildrel #1\over \longrightarrow}
\def\iso{\buildrel \sim\over\to}
\def\GA{{\mathfrak{A}}}
\def\Gm{{\mathfrak{m}}}
\def\Gsl{{\mathfrak{sl}}}
\def\CB{{\mathcal{B}}}
\def\CC{{\mathcal{C}}}
\def\CD{{\mathcal{D}}}
\def\CF{{\mathcal{F}}}
\def\CL{{\mathcal{L}}}
\def\CO{{\mathcal{O}}}
\def\CP{{\mathcal{P}}}
\def\CS{{\mathcal{S}}}
\def\CT{{\mathcal{T}}}
\def\BA{{\mathbf{A}}}
\def\BF{{\mathbf{F}}}
\def\BG{{\mathbf{G}}}
\def\BL{{\mathbf{L}}}
\def\BZ{{\mathbf{Z}}}
\def\eps{\varepsilon}
\def\ad{\operatorname{ad}\nolimits}
\def\Aut{\operatorname{Aut}\nolimits}
\def\can{{\mathrm{can}}}
\def\coker{\operatorname{coker}\nolimits}
\def\DPic{\operatorname{DPic}\nolimits}
\def\en{{\mathrm{en}}}
\def\End{\operatorname{End}\nolimits}
\def\Endgr{\operatorname{Endgr}\nolimits}
\def\Ext{\operatorname{Ext}\nolimits}
\def\GL{\operatorname{GL}\nolimits}
\def\hd{\operatorname{hd}\nolimits}
\def\Hom{\operatorname{Hom}\nolimits}
\def\Homgr{\operatorname{Homgr}\nolimits}
\def\id{\operatorname{id}\nolimits}
\def\im{\operatorname{im}\nolimits}
\def\Int{\operatorname{Int}\nolimits}
\def\mMod{\operatorname{\!-mod}\nolimits}
\def\mMOD{\operatorname{\!-Mod}\nolimits}
\def\mMODgr{\operatorname{\!-Modgr}\nolimits}
\def\mModgr{\operatorname{\!-modgr}\nolimits}
\def\mmodgr{\operatorname{\!-modgr}\nolimits}
\def\Pic{\operatorname{Pic}\nolimits}
\def\mprojgr{\operatorname{\!-projgr}\nolimits}
\def\Out{\operatorname{Out}\nolimits}
\def\SL{\operatorname{SL}\nolimits}
\def\soc{\operatorname{soc}\nolimits}
\def\Spec{\operatorname{Spec}\nolimits}
\def\mstab{\operatorname{\!-stab}\nolimits}
\def\mstabgr{\operatorname{\!-stabgr}\nolimits}
\def\StPic{\operatorname{StPic}\nolimits}
\def\ie{{\em i.e.}}
\def\tg{{\tilde{g}}}
\newtheorem{thm}{Th\'eor\`eme}[section]
\newtheorem{lemme}[thm]{Lemme}
\newtheorem{cor}[thm]{Corollaire}
\newtheorem{prop}[thm]{Proposition}
\newtheorem{defi}[thm]{D\'efinition}
\theoremstyle{definition}
\newtheorem{rem}[thm]{Remarque}
\newtheorem{question}[thm]{Question}
\def\DPic{\operatorname{DPic}\nolimits}
\begin{document}
\title{Automorphismes, graduations et cat\'egories
triangul\'ees}
\author{Rapha\"el Rouquier}
\maketitle
\tableofcontents

\vskip 1cm

\begin{abstract}
We give a moduli interpretation of the outer automorphism group
$\Out$ of a finite dimensional algebra similar to that of the Picard
group of a scheme. We deduce that the 
connected component of $\Out$
is invariant under derived and stable equivalences.
This allows us to transfer gradings between algebras
and gives rise to conjectural homological constructions of
interesting gradings on block of finite groups with abelian defect.
We give applications to the lifting
of stable equivalences to derived equivalences.
We give a counterpart of the invariance result for smooth
projective varieties~: the product $\Pic^0\rtimes\Aut^0$ is
invariant under derived equivalence.
\end{abstract}

\section{Introduction}

Il est classique qu'un automorphisme $\sigma$ d'une alg\`ebre de 
dimension finie $A$ fournit une
version tordue $A_\sigma$ du bimodule r\'egulier et qu'on obtient ainsi
une bijection entre automorphismes ext\'erieurs et classes d'isomorphismes
de bimodules, libres de rang $1$ commes modules \`a gauche et \`a droite.
Dans ce travail, nous donnons une version g\'eom\'etrique de ce 
r\'esultat (th\'eor\`eme \ref{repOut}).
Le groupe des automorphismes ext\'erieurs repr\'esente
le foncteur qui associe \`a une vari\'et\'e $S$
le quotient du groupe
des classes d'isomorphisme de
$(A^\en\otimes\CO_S)$-bimodules qui sont localement
libres de rang $1$ comme $(A\otimes\CO_S)$-modules et comme
$(A^\circ\otimes\CO_S)$-modules par $\Pic(S\times\Spec ZA)$.
Le r\'esultat classique s'en d\'eduit par passage aux points
ferm\'es (\ie, cas o\`u $S$ est un point).
Ceci est \`a rapprocher de la description de la vari\'et\'e de Picard
d'une vari\'et\'e projective lisse $X$ comme repr\'esentant le
foncteur $S\mapsto \Pic(X\times S)/\Pic(S)$.

\smallskip
Nous d\'eduisons de cette description de $\Out$ que la composante
connexe $\Out^0$ de l'identit\'e est invariante par \'equivalence
de Morita (th\'eor\`eme \ref{equivMorita}),
r\'esultat d\^u \`a Brauer \cite{Po}. De mani\`ere
similaire, on d\'eduit l'invariance de $\Out^0$ par \'equivalence
d\'eriv\'ee (th\'eor\`eme \ref{invarderiv}), r\'esultat obtenu
ind\'ependamment, et par des m\'ethodes diff\'erentes,
par B.~Huisgen-Zimmermann et M.~Saorin \cite{HuiSa}.
Nous donnons aussi une version de ce r\'esultat pour les g\'eom\`etres~:
pour une vari\'et\'e projective lisse, le produit $\Pic^0\rtimes \Aut^0$
est invariant par \'equivalence de cat\'egories d\'eriv\'ees (th\'eor\`eme
\ref{PicAut}).

Le cas d'\'equivalences stables entre alg\`ebres auto-injectives est
plus d\'elicat. La rigidit\'e des modules projectifs est bien connue, celle
de facteurs directs projectifs n'est pas nouvelle non plus. Ces propri\'et\'es
sont de nature locale et nous avons besoin d'un crit\`ere qui nous assure
de la pr\'esence globale d'un facteur direct projectif, \`a
partir d'informations ponctuelles. C'est l'objet
de la proposition \ref{factproj}. On d\'eduit alors l'invariance de $\Out^0$
par \'equivalence stable de type de Morita entre alg\`ebres auto-injectives
(th\'eor\`eme \ref{Out0stable}).

\smallskip
Nous en d\'eduisons la possibilit\'e de transporter des graduations par
de telles \'equivalences~: en particulier, on s'attend, de cette mani\`ere,
\`a obtenir des
graduations int\'eressantes pour les blocs \`a d\'efaut ab\'elien des
groupes finis (car l'alg\`ebre de groupe d'un $p$-groupe fini sur
un corps de caract\'eristique $p$ admet une graduation compatible aux
puissances du radical) ou pour les alg\`ebres de Hecke aux racines de l'unit\'e.

\smallskip

Le bloc principal de la cat\'egorie $\CO$
d'une alg\`ebre de Lie semi-simple complexe peut \^etre
munie d'une ``graduation'' extr\^emement int\'eressante --- les
polyn\^omes de Kazhdan-Lusztig s'interpr\`etent alors comme les
multiplicit\'es gradu\'ees des modules simples apparaissant dans les
modules de Verma. Cette graduation provient de l'\'equivalence avec
une cat\'egorie de faisceaux pervers sur la vari\'et\'e des drapeaux,
o\`u la graduation provient des structures de poids (structure de Hodge
mixte ou action de l'endomorphisme de Frobenius). La graduation a aussi \'et\'e
construite alg\'ebriquement par Soergel \cite{Soe} et notre travail
est en ce sens une continuation de celui de Soergel.

Notre travail montre qu'un ph\'enom\`ene similaire
doit se passer pour les blocs \`a d\'efaut ab\'elien de groupes finis,
malgr\'e l'absence de g\'eom\'etrie pour l'interpr\'eter. L'utilisation
de ces graduations pour prouver la conjecture du d\'efaut ab\'elien de
Brou\'e se heurte au probl\`eme de positivit\'e des graduations
obtenues. Le point clef est le rel\`evement d'\'equivalences stables
en \'equivalences d\'eriv\'ees. Nous montrons que de tels rel\`evements
existent pour les alg\`ebres extension triviale d'une alg\`ebre
h\'er\'editaire (th\'eor\`eme \ref{th:relevhered}). Des r\'esultats
similaires dans le cas o\`u le type de repr\'esentation est fini
sont d\^us \`a Asashiba, qui avait utilis\'e
des m\'ethodes de recouvrement pour relever des \'equivalences stables
en \'equivalences d\'eriv\'ees \cite{As1}.

\vskip 0.2cm
Je remercie M.~Brou\'e, D.~Huybrechts et J.-P.~Serre 
pour leurs commentaires et leurs suggestions, et A.~Yekutieli, pour
des discussions qui ont consid\'erablement aid\'e ma
compr\'ehension de ce travail.

Les r\'esultats de cet article ont \'et\'e expos\'es en 2000-2002 (Paris,
Chicago, Constan\c{t}a, Yale, Tokyo, Osaka, Oberwolfach, Londres, Grenoble)
et annonc\'es
plus r\'ecemment dans \cite[\S 3.1.2]{Rou3}. Une version
pr\'eliminaire de cet article a circul\'e en 2000-2001.

\section{Notations}
On prend pour $k$ un corps alg\'ebriquement clos.
Par vari\'et\'e, on entend un sch\'ema s\'epar\'e r\'eduit
de type fini sur $k$.
Par groupe alg\'ebrique, on entend un sch\'ema en groupe affine lisse
de type fini sur $k$. Si $X$ est un
sch\'ema affine et $x$ un point ferm\'e de $X$, on note $\Gm_x$ l'id\'eal
maximal correspondant de $\Gamma(X)$.

On \'ecrira $\otimes$ pour $\otimes_k$.

Soit $A$ une $k$-alg\`ebre. On note $A^\circ$ l'alg\`ebre oppos\'ee et on pose
$A^\en=A\otimes A^\circ$.
On note $J(A)$ le radical de Jacobson de $A$ et on pose
$J^i(A)=J(A)^i$. On note $\soc^i(A)$ l'annulateur de $J^i(A)$ dans $A$.
On note $A\mMod$ la cat\'egorie des $A$-modules \`a gauche de type fini et
$D^b(A)$ sa cat\'egorie d\'eriv\'ee born\'ee, lorsque $A$ est coh\'erent.

Si $A$ est de dimension finie et $M\in A\mMod$, on note $P_M$ une
enveloppe projective de $M$. On note $\Omega_A M$ le noyau d'une
surjection $P_M\to M$. On pose $\Omega^0_A M=M$ et
$\Omega^i_A M=\Omega_A(\Omega_A^{i-1}M)$ pour $i>0$. Si $I_M$ est
une enveloppe injective de $M$ et $M\to I_M$ est une injection,
on note $\Omega^{-1}_AM$ son conoyau. On d\'efinit par induction
$\Omega^{-i}_AM=\Omega^{-1}_A(\Omega^{-i+1}_AM)$ pour $i\ge 1$.

Pour une alg\`ebre gradu\'ee $A$, la
cat\'egorie $A\mModgr$ d\'esignera la cat\'egorie des $A$-modules
gradu\'es (de m\^eme pour la cat\'egorie stable $A\mstabgr$).
Pour $V$ un $A$-module
gradu\'e, on note $W=V\langle i \rangle$ le $A$-module gradu\'e donn\'e
par $W_j=V_{i+j}$. On note
$\Homgr(V,V')=\bigoplus_i \Hom(V,V'\langle i\rangle)$.
Pour $V$ un $k$-espace vectoriel, on note $V^*=\Hom_k(V,k)$ le dual
$k$-lin\'eaire.

\smallskip
Soit $X$ une vari\'et\'e sur $k$.
Pour $x$ point de $X$, on note $k(x)$ le corps r\'esiduel en $x$
(corps des fractions de $\CO_x/\Gm_x$).
Pour $\CF$ un $\CO_X$-module, on note $\CF(x)=\CF_x\otimes_{\CO_x} k(x)$.
On \'ecrira parfois ``$x\in X$'' pour ``$x$ est un point ferm\'e de $X$''.

\section{Groupes d'automorphismes d'alg\`ebres de dimension finie}

\subsection{Structure}
\subsubsection{}
Soit $k$ un corps alg\'ebriquement clos et $A$ une $k$-alg\`ebre de dimension
finie.

Soit $G$ un groupe alg\'ebrique d'automorphismes de $A$. Soit
$\Delta_A:A\to A\otimes \CO_G$ le morphisme associ\'e.
Alors, $\Delta_A$ est un morphisme d'alg\`ebres (en particulier,
$g\cdot (ab)=(g\cdot a)(g\cdot b)$ pour $g\in G$ et $a,b\in A$). En outre,
on a un diagramme
commutatif
$$\xymatrix{
A\ar[rr]^{\Delta_A}\ar[d]_{\Delta_A} & & A\otimes \CO_G
  \ar[d]^{\id_A\otimes \Delta} \\
A\otimes \CO_G\ar[rr]_{\Delta_A\otimes \id_{\CO_G}} & &
 A\otimes \CO_G\otimes \CO_G
}$$
o\`u $\Delta:\CO_G\to \CO_G\otimes \CO_G$ est la comultiplication
(en particulier, $g\cdot (g'\cdot a)=(gg')\cdot a$ pour $g,g'\in G$ et
$a\in A$).

\subsubsection{}
On a une suite exacte de groupes alg\'ebriques~:
$$1\to 1+J(A)\to A^\times \to (A/J(A))^\times\to 1$$
et $1+J(A)$ est le radical unipotent de $A^\times$.
On a 
une filtration $1+J(A)\supset 1+J^2(A)\supset\cdots$ dont les quotients
successifs sont des groupes unipotents commutatifs $\BG_a^r$.

Soit $S$ une sous-alg\`ebre semi-simple maximale de $A$, {\em i.e.},
l'image d'une section du morphisme d'alg\`ebres $A\to A/J(A)$.
On a $A=S\oplus JA$ et $A^\times=(1+J(A))\rtimes S^\times$.

Soit $\Aut(A)$ le groupe d'automorphismes de $A$ (vu comme sch\'ema
en groupe sur $k$). On note $\Int(A)$ son groupe d'automorphismes int\'erieurs,
image de $A^\times$ par le morphisme de conjugaison
$\ad:A^\times\to \Aut(A),\ a\mapsto (b\mapsto aba^{-1})$.
C'est un sous-groupe ferm\'e distingu\'e connexe.

On a une suite exacte
$$1\to (ZA)^\times\to A^\times \to \Int(A)\to 1.$$
Elle fournit une d\'ecomposition

$$\Int(A)=\left((1+J(A))/(1+J(ZA))\right)\rtimes
 \left(S^\times/((ZA)^\times\cap S^\times)\right).$$

\begin{lemme}
\label{scissionlocale}
Soit $H$ un sous-groupe ferm\'e distingu\'e d'un groupe alg\'ebrique $G$
et $f:G\to G/H$ le morphisme quotient.
Si $H$ est extension de groupes additifs $\BG_a$ et de groupes
multiplicatifs $\BG_m$, alors, $f$ est localement scind\'e comme
morphisme de vari\'et\'es.
\end{lemme}

\begin{proof}
(cf \cite[VII \S 1.6]{Se})
Il suffit de d\'emontrer le lemme lorsque $G/H$ est connexe.
Soit $\eta$ le point g\'en\'erique de $G/H$. Son image
inverse dans $G$ est un espace homog\`ene principal sous $H$,
donc est trivial, \ie, poss\`ede un point rationnel sur $k(\eta)$
(car un espace homog\`ene principal sous un groupe $\BG_a$ ou $\BG_m$ est
trivial). Un tel point fournit
une section rationnelle du morphisme $f$.
\end{proof}

Il r\'esulte du lemme \ref{scissionlocale} que

\begin{prop}
\label{scissionInt}
Le morphisme canonique de vari\'et\'es $A^\times\to\Int(A)$ est localement
scind\'e.
\end{prop}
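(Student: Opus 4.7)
Le plan consiste à appliquer directement le lemme \ref{scissionlocale} à la suite exacte
$$1 \to (ZA)^\times \to A^\times \to \Int(A) \to 1$$
rappelée juste avant la proposition. Il s'agit donc de vérifier deux points~: $(ZA)^\times$ est un sous-groupe fermé distingué de $A^\times$, et il est extension itérée de groupes $\BG_a$ et $\BG_m$.

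Les deux premières propriétés sont immédiates~: $(ZA)^\times$ est fermé dans $A^\times$ puisque $ZA$ est une sous-algèbre fermée de $A$, et il est distingué puisque central. Pour la structure, on reprend pour la $k$-algèbre commutative $ZA$ la discussion menée ci-dessus pour $A$. On a la suite exacte
$$1 \to 1 + J(ZA) \to (ZA)^\times \to (ZA/J(ZA))^\times \to 1.$$
Comme $k$ est algébriquement clos et que $ZA/J(ZA)$ est une $k$-algèbre commutative semi-simple de dimension finie, on a $ZA/J(ZA) \simeq k^n$, d'où $(ZA/J(ZA))^\times \simeq \BG_m^n$. La filtration décroissante $1 + J(ZA) \supset 1 + J^2(ZA) \supset \cdots$ a pour quotients successifs les groupes vectoriels $J^i(ZA)/J^{i+1}(ZA) \simeq \BG_a^{r_i}$, via l'isomorphisme $1 + x \mapsto x$ (bien défini au niveau de chaque quotient puisque $J^i \cdot J^i \subset J^{i+1}$). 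Ainsi $(ZA)^\times$ est une extension itérée de groupes $\BG_a$ et $\BG_m$, et le lemme \ref{scissionlocale} fournit la conclusion.

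Cet argument ne présente pas d'obstacle véritable~: toute la matière géométrique est encapsulée dans le lemme \ref{scissionlocale} (dont la démonstration repose sur la trivialité des espaces principaux homogènes sous $\BG_a$ et $\BG_m$), et la vérification des hypothèses pour $(ZA)^\times$ se réduit, grâce à la clôture algébrique de $k$, à une analyse directe des unités d'une algèbre commutative de dimension finie. Le seul point auquel il faut prendre garde est de bien identifier le noyau comme $(ZA)^\times$ et non, par exemple, comme $(ZA)^\times \cap S^\times$ ou un autre sous-groupe issu de la décomposition $A^\times = (1+J(A)) \rtimes S^\times$.
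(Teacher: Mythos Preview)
Votre démonstration est correcte et suit exactement l'approche du papier~: celui-ci se contente d'indiquer que la proposition découle du lemme \ref{scissionlocale}, et vous explicitez proprement la vérification (évidente mais non écrite dans le texte) que $(ZA)^\times$ est bien extension itérée de $\BG_a$ et $\BG_m$.
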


\subsubsection{}
\label{secOutscinde}
Soit $\Out(A)$ le groupe quotient $\Aut(A)/\Int(A)$.
La composante connexe de l'identit\'e $\Aut^0(A)$ de $\Aut(A)$ est
contenue dans le sous-groupe $\Aut^K(A)$
des \'el\'ements qui fixent les classes
d'isomorphisme de modules simples (=qui agissent int\'erieurement
sur $A/JA$). On note $\Out^K(A)$ le sous-groupe de $\Out(A)$
d\'efinit de mani\`ere similaire.

Si $A=A_1\times A_2$, alors $\Out^0(A)=\Out^0(A_1)\times\Out^0(A_2)$.
Si $A$ est simple, alors $\Out(A)=1$.

\smallskip

Soit $F(A)$ le sous-groupe ferm\'e distingu\'e
de $\Aut^0(A)$ form\'e des
\'el\'ements qui agissent trivialement sur $A/J(A)$.

Puisque le morphisme d'alg\`ebres $A\to A/J(A)$ est scind\'e et que
$\Aut^0(A/J(A))=\Int(A/J(A))$, on d\'eduit que l'on a une suite
exacte scind\'ee

$$1\to F(A)\to \Aut^0(A)\to \Int(A/J(A))\to 1$$
et $\Aut^0(A)=F(A)\cdot \Int(A)$, {\em i.e.}, le morphisme canonique
$F(A)\to\Out^0(A)$ est surjectif.

\smallskip

On a
$$F(A)\cap\Int(A)=\left((1+J(A))/(1+J(ZA))\right)\rtimes
 \left((ZS)^\times/((ZA)^\times\cap S^\times)\right).$$

On d\'eduit du lemme \ref{scissionlocale} que les morphismes canoniques
$1+J(A)\to F(A)\cap\Int(A)$ et
$F(A)\to\Out^0(A)$ sont localement scind\'es. 

En particulier, on a

\begin{prop}
\label{scission}
Le morphisme de vari\'et\'es $\Aut(A)\to\Out(A)$ est localement scind\'e.
\end{prop}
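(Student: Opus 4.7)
The plan is to deduce the statement from the already-established local splitness of $F(A)\to\Out^0(A)$, combined with a translation argument in the algebraic group $\Aut(A)$ to handle points outside the identity component of $\Out(A)$.

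First I would reduce the problem to constructing a section over an open neighborhood of the identity in $\Out(A)$. Suppose such a section $s_0:U_0\to\Aut(A)$ is given, with $1\in U_0$. For any point $\bar\tau\in\Out(A)$, choose a lift $\tau\in\Aut(A)$. Since left translation by $\bar\tau$ is an automorphism of $\Out(A)$ as a variety, the set $\bar\tau U_0$ is an open neighborhood of $\bar\tau$, and the composition
$$\bar\tau U_0\;\Rarr{\bar\tau^{-1}\cdot}\;U_0\;\Rarr{s_0}\;\Aut(A)\;\Rarr{\tau\cdot}\;\Aut(A),$$
namely $\bar\tau\bar u\mapsto \tau\cdot s_0(\bar u)$, is a morphism of vari\'et\'es whose composition with the projection $\Aut(A)\to\Out(A)$ is the inclusion $\bar\tau U_0\hookrightarrow\Out(A)$. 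This yields a local section at $\bar\tau$.

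Next I would construct $s_0$. Since $\Out^0(A)$ is open in $\Out(A)$, it is enough to obtain a section on some open neighborhood of the identity in $\Out^0(A)$. The paragraph preceding the proposition furnishes precisely such a section of the surjection $F(A)\to\Out^0(A)$: Lemma~\ref{scissionlocale} applies because the kernel of $F(A)\to\Out^0(A)$ is the quotient $F(A)\cap\Int(A)$ of $1+J(A)$ and $(ZS)^\times/((ZA)^\times\cap S^\times)$, hence a successive extension of groupes $\BG_a$ et $\BG_m$. Composing this section with the inclusion $F(A)\hookrightarrow\Aut(A)$ produces $s_0$.

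The substantive work has thus already been completed in the preceding discussion, where the kernel $F(A)\cap\Int(A)$ was identified and Lemma~\ref{scissionlocale} was invoked. I do not anticipate any real obstacle: the present proposition is essentially a routine globalization by left translation in $\Aut(A)$, starting from the local section of $F(A)\to\Out^0(A)$ at the identity.
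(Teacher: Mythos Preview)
Your proposal is correct and follows the same approach as the paper: the proposition is stated with \emph{En particulier} immediately after the observation that $F(A)\to\Out^0(A)$ is locally split via Lemma~\ref{scissionlocale}, and your translation argument is precisely the standard step the paper leaves implicit. Your description of the kernel $F(A)\cap\Int(A)$ is slightly loose (it is $\bigl((1+J(A))/(1+J(ZA))\bigr)\rtimes\bigl((ZS)^\times/((ZA)^\times\cap S^\times)\bigr)$, not a quotient of $1+J(A)$ alone), but this does not affect the argument.
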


\begin{prop}
On a $\Aut^K(A)=C_{\Aut(A)}(S)\cdot\Int(A)$.
\end{prop}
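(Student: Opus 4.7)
L'inclusion $C_{\Aut(A)}(S)\cdot\Int(A)\subseteq \Aut^K(A)$ est imm\'ediate~: un automorphisme int\'erieur agit (par d\'efinition) int\'erieurement sur $A/J(A)$, et un \'el\'ement de $C_{\Aut(A)}(S)$ fixe $S$ point par point, donc induit l'identit\'e sur $A/J(A)$ via l'isomorphisme $S\iso A/J(A)$. Pour l'inclusion inverse, l'id\'ee est de d\'ecomposer $\sigma\in\Aut^K(A)$ en un produit $\iota\cdot\tau$, $\iota\in\Int(A)$ et $\tau$ fixant $S$, par trois normalisations successives.

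Premi\`erement, l'automorphisme $\bar\sigma$ induit par $\sigma$ sur $A/J(A)$ est int\'erieur~; comme la surjection $A^\times\to(A/J(A))^\times$ est surjective, on rel\`eve l'\'el\'ement qui conjugue en $a\in A^\times$, et $\sigma_1:=\ad(a)^{-1}\sigma$ agit trivialement sur $A/J(A)$. Deuxi\`emement, $\sigma_1(S)$ est une nouvelle section semi-simple maximale de $A\to A/J(A)$~; j'invoquerais alors le th\'eor\`eme de Wedderburn-Malcev pour trouver $u\in 1+J(A)$ tel que $\sigma_1(S)=uSu^{-1}$. Cette \'etape me para\^\i t \^etre le seul point un peu profond de la preuve, mais elle rel\`eve de r\'esultats classiques sur la structure des alg\`ebres de dimension finie. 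Troisi\`emement, $\sigma_2:=\ad(u)^{-1}\sigma_1$ stabilise alors $S$~; comme $u\in 1+J(A)$, $\sigma_2$ agit encore trivialement sur $A/J(A)$, donc sa restriction \`a $S$ est un automorphisme de $S$ qui devient l'identit\'e apr\`es projection sur $A/J(A)$. L'isomorphisme canonique $S\iso A/J(A)$ force alors $\sigma_2|_S=\id_S$, soit $\sigma_2\in C_{\Aut(A)}(S)$.

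En remontant les deux modifications, on obtient $\sigma=\ad(au)\cdot\sigma_2$ avec $\sigma_2\in C_{\Aut(A)}(S)$, ce qui termine. Le seul ingr\'edient externe utilis\'e est Wedderburn-Malcev~; tout le reste n'est que suivi de classes modulo $J(A)$.
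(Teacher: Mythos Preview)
Your proof is correct and uses the same two ingredients as the paper: the Wedderburn--Malcev theorem (the paper cites \cite[Th\'eor\`eme 7.3(c)]{Th}) and the fact that an element of $\Aut^K(A)$ acts by an inner automorphism on $A/J(A)\simeq S$. The only difference is the order of the two normalizations. The paper first conjugates $\phi(S)$ back to $S$ (Wedderburn--Malcev), observes that the resulting $\phi$ restricts to an inner automorphism of $S$, and then conjugates by an element of $S^\times$ to make $\phi|_S=\id$. You instead first kill the action on $A/J(A)$ by lifting the conjugating element through $A^\times\twoheadrightarrow(A/J(A))^\times$, and only then invoke Wedderburn--Malcev; because you use the strong form (conjugation by an element of $1+J(A)$), the triviality on $A/J(A)$ survives and the final step $\sigma_2|_S=\id$ is automatic. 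Your ordering saves one line, at the cost of needing the refined Wedderburn--Malcev statement; the paper's ordering only needs conjugacy of maximal semisimple subalgebras. Either way, the argument is the same in substance.
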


\begin{proof}
Tout d'abord, les \'el\'ements de $C_{\Aut(A)}(S)$ et $\Int(A)$ fixent les
classes d'isomorphisme de $A$-modules simples.

On prend $\phi\in\Aut^K(A)$. Alors, $\phi(S)$ est conjugu\'ee \`a
$S$ \cite[Th\'eor\`eme 7.3(c)]{Th}. Quitte \`a changer $\phi$
par un automorphisme int\'erieur, on peut supposer $\phi(S)=S$.
Par hypoth\`ese, $\phi$ induit alors un automorphisme int\'erieur de
$S$. Donc, quitte \`a multiplier par un \'el\'ement inversible de $S$,
on se ram\`ene au cas o\`u $\phi$ agit trivialement sur $S$.
\end{proof}

\begin{rem}
Soit $G(A)$ le sous-groupe de $F(A)$ des \'el\'ements qui agissent
trivialement sur $JA/J^2A$. Alors, $G(A)$ agit trivialement sur
$J^iA/J^{i+1}A$ pour tout $i$, donc $G(A)$ est unipotent
(notons au passage que $\Aut(A)$ est contenu dans le sous-groupe parabolique de
$\End_{k\mMod}(A)^\times$ fixateur du drapeau
$A\supset JA\supset J^2A\supset\cdots$
et que $G(A)$ est l'intersection du radical unipotent de ce sous-groupe
parabolique avec $\Aut^0(A)$).

Ainsi, le noyau de l'application canonique
$$F(A)\to\End_{(A/J(A)\otimes (A/J(A))^\circ}(JA/J^2A)^\times$$
est unipotent~: la partie r\'eductive de $\Out(A)$ se retrouve dans
l'action de $F(A)$ sur l'espace cotangent.

Notons enfin que 
$\End_{(A/J(A)\otimes (A/J(A))^\circ}(JA/J^2A)^\times\simeq
\prod_{V,W}\GL(\Ext^1_A(V,W))$, o\`u $V,W$ parcourent un ensemble
de repr\'esentants des classes d'isomorphisme de $A$-modules simples
(lemme \ref{descriptionT}). On dispose de repr\'esentations de
$F(A)$ sur les $\Ext^1_A(V,W)$ et la somme de ces repr\'esentations
a un noyau unipotent. En particulier, si ces $\Ext^1$ sont de dimension
$0$ ou $1$, alors $F(A)$ (et donc $\Out^0(A)$) est r\'esoluble.
\end{rem}

\begin{rem}
Le lecteur int\'eress\'e pourra consid\'erer la version sch\'ematique
$\Aut'$ du groupe des automorphismes.
Soit $A=k[x]/x^2$. Si $k$ n'est pas de caract\'eristique $2$, alors
$\Aut'(A)\simeq \BG_m$ est r\'eduit. Par contre, lorsque $k$ est de
caract\'eristique $2$,
$\Aut'(A)$ est un produit semi-direct $U^{[1]}\rtimes \BG_m$
o\`u $U^{[1]}$ est le noyau de l'endomorphisme de Frobenius
$x\to x^2$ sur $\BG_a$. Dans ce cas, $\Aut'(A)$ n'est pas r\'eduit.
\end{rem}

\begin{rem}
On a une suite de sous-groupes de $\Aut(A)$, correspondant aux
fixateurs des $J^i(A)$.
\end{rem}

\begin{rem}
Notons que les endomorphismes de $k$-alg\`ebres (unitaires) de $A$
pr\'eservent $JA$. En effet, soit $S$ une sous-alg\`ebre semi-simple
maximale de $A$. Soit $\sigma$ un endomorphisme de $A$. Alors,
la composition $S\to A\Rarr{\sigma} A \to A/JA$ est un morphisme
d'alg\`ebres unitaires entre deux alg\`ebres semi-simples isomorphes,
donc est un isomorphisme. Par cons\'equent, le noyau du morphisme
surjectif $A\Rarr{\sigma} A \to
A/JA$ est $JA$, donc $\sigma(JA)\subseteq JA$.

Les endomorphismes non unitaires de $A$ ne pr\'eservent pas
n\'ecessairement $JA$. La repr\'esentation
r\'eguli\`ere de $k[x]/(x^2)$ fournit un plongement
de $k[x]/(x^2)$ dans $M_2(k)$. En prolongeant par $0$, on obtient un
endomorphisme non unitaire de $A=k[x]/(x^2)\oplus M_2(k)$, injectif sur $JA$,
d'image ayant une intersection nulle avec $JA$.
\end{rem}

\subsection{Familles alg\'ebriques d'automorphismes et bimodules}
\label{autobimod}
\subsubsection{}
\label{familles}
Soit $S$ une vari\'et\'e. On note
$\delta:S\to S\times S$ le plongement diagonal.
Se donner un morphisme $\pi$ de $S$ dans 
l'espace des endomorphismes de $k$-espace vectoriel de $A$ revient \`a se
donner un morphisme $\rho:A\to A\otimes \CO_S$
de faisceaux de $k$-espaces vectoriels sur $S$.
Demander que le morphisme $\pi$ soit \`a valeur dans l'espace des
endomorphismes d'alg\`ebre revient \`a demander que $\rho$ soit un
morphisme d'alg\`ebres.
Demander que $f$ soit \`a valeurs dans la vari\'et\'e des endomorphismes
inversibles revient \`a demander que le morphisme
$$\rho\otimes 1:A\otimes \CO_S\xrightarrow{\rho\otimes \id_{\CO_S}}
A\otimes \CO_S\otimes\CO_S\xrightarrow{\id_A\otimes\delta^*}A\otimes\CO_S$$
soit un isomorphisme
({\em i.e.}, pour tout point ferm\'e $x$ de $S$, le morphisme
compos\'e $A\Rarr{\rho} A\otimes \CO_S\to A\otimes k(x)=A$ est un
isomorphisme).

\smallskip
Soit $\CD_A(S)$ l'ensemble des morphismes d'alg\`ebres
$\rho:A\to A\otimes\CO_S$ tels que
$\rho\otimes 1:A\otimes\CO_S\to A\otimes\CO_S$ est un isomorphisme.

On munit $\CD_A(S)$ d'une structure de groupe par
$$\rho\ast\rho':A\xrightarrow{\rho'}A\otimes\CO_S
\xrightarrow{\rho\otimes 1}A\otimes\CO_S.$$

Soit $\phi:S'\to S$ un morphisme de vari\'et\'es. On a alors une
application canonique 
$$\CD_A(S)\to\CD_A(S'),\ \rho\mapsto
(A\xrightarrow{\rho}A\otimes\CO_S\xrightarrow{\id_A\otimes\phi^*}
A\otimes\CO_{S'}).$$

On obtient ainsi un foncteur $\CD_A$ de la cat\'egorie des vari\'et\'es vers
la cat\'egorie des groupes~: il est repr\'esent\'e par $\Aut(A)$.

\subsubsection{}
\label{CA}
Soit $\tilde{\CC}_A(S)$ l'ensemble des couples $(M,f)$ o\`u 
$M$ est un $(A^\en\otimes\CO_S)$-module, libre de rang $1$
comme $(A\otimes\CO_S)$-module et comme $(A^\circ\otimes\CO_S)$-module
et $f$ est un isomorphisme de $(A\otimes\CO_S)$-modules de $M$ vers
$A\otimes\CO_S$.

On d\'efinit un produit par
$(M,f)\ast(M',f')=((M\otimes_A M')\otimes_{\CO_S\otimes\CO_S}\CO_S,f'')$
o\`u $f''=h\otimes \id_{\CO_S}$ et $h$ est l'isomorphisme de
$(A\otimes\CO_S\otimes\CO_S)$-modules
$$h:M\otimes_A M'\xrightarrow{\id_M\otimes f'}M\otimes_A A\otimes\CO_S
\xrightarrow{\can}M\otimes\CO_S\xrightarrow{f\otimes \id_{\CO_S}}
A\otimes\CO_S\otimes\CO_S.$$

Soit $\phi:S'\to S$ un morphisme de vari\'et\'es. On alors une
application canonique 
$$\tilde{\CC}_A(S)\to\tilde{\CC}_A(S'),\ \
(M,f)\mapsto (\phi^*M,f')$$
o\`u $f':\phi^*M\xrightarrow{\phi^*(f)}\phi^*(A\otimes\CO_S)
\xrightarrow{\can}A\otimes\CO_{S'}$.

On dit que $(M,f)$ et $(M',f')$ sont isomorphes si
$f^{-1}f':M'\to M$ est compatible \`a l'action de $A^\circ$, \ie, si c'est
un isomorphisme de $(A^\en\otimes\CO_S)$-modules.
On note $\CC_A(S)$ le groupe des classes d'isomorphisme d'\'el\'ements
de $\tilde{\CC}_A(S)$.

On a construit un foncteur $\CC_A$ de la cat\'egorie des vari\'et\'es vers
la cat\'egorie des groupes.

\begin{prop}
\label{pr:fonctAut}
Le foncteur $\CC_A$ est repr\'esent\'e par $\Aut(A)$.
\end{prop}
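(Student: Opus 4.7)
The plan is to construct a natural isomorphism of group-valued functors $\Phi : \CD_A \iso \CC_A$ and invoke the fact, recalled in~\ref{familles}, that $\CD_A$ is represented by $\Aut(A)$. Since the issue is purely functorial, it suffices to define $\Phi_S$ and an inverse $\Psi_S$ on sections, then check compatibility with restriction and with the group laws.

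For the forward direction, given $\rho \in \CD_A(S)$, I take $M_\rho := A \otimes \CO_S$ with its standard left $(A \otimes \CO_S)$-module structure, equipped with the right $A$-action $m \cdot a := m \cdot \rho(a)$ (product in the ring $A \otimes \CO_S$). Associativity in $A \otimes \CO_S$ gives the bimodule compatibility, and the identity $f_\rho := \id$ is the required left-trivialization. The map $A \otimes \CO_S \to M_\rho$, $a \otimes s \mapsto (1 \otimes 1) \cdot a \cdot s$, is precisely the composite $\rho \otimes 1$ from~\ref{familles}, which is an isomorphism by hypothesis on $\rho$; hence $M_\rho$ is also free of rank $1$ on the right, with generator $1 \otimes 1$. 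Set $\Phi_S(\rho) := [(M_\rho, f_\rho)] \in \CC_A(S)$.

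For the inverse, given a representative $(M, f)$, transport via $f$ to assume $M = A \otimes \CO_S$ as left module. A right $A$-action commuting with the left $(A \otimes \CO_S)$-action is left-$(A \otimes \CO_S)$-linear, hence determined by its value $\rho(a) := (1 \otimes 1) \cdot a \in A \otimes \CO_S$; unwinding $(bs) \cdot (aa') = ((bs) \cdot a) \cdot a'$ via left-linearity shows $\rho$ is a morphism of $k$-algebras. The right-freeness of $M$ of rank $1$ then forces $\rho \otimes 1$ to be an isomorphism: fiberwise at a closed point $x$, $\rho_x : A \to A$ makes $A$ into a free right $A$-module of rank $1$, hence is surjective, hence an iso by finite dimensionality, and this globalises to $\rho \otimes 1$ since $A \otimes \CO_S$ is coherent. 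A change of representative $(M', f')$ with $f^{-1} f'$ a bimodule isomorphism manifestly yields the same $\rho$, so $\Psi_S([(M,f)]) := \rho$ is well-defined and inverse to $\Phi_S$.

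It remains to verify naturality in $S$ and compatibility with the group laws. Naturality is immediate, as pulling back $M_\rho$ along $\phi : S' \to S$ recovers $M_{\phi^*\rho}$ with its canonical trivialization. The only step requiring genuine bookkeeping—and the main potential obstacle—is compatibility with the group laws, because the product in $\CD_A$ is simple composition whereas that in $\CC_A$ mixes a tensor product $\otimes_A$ with a pullback along the diagonal. Explicitly, one identifies $M_\rho \otimes_A M_{\rho'} \iso A \otimes \CO_S \otimes \CO_S$ via $(b \otimes s) \otimes_A (c \otimes t) \mapsto (b \otimes s)\rho(c) \otimes t$ (passing the left action of $c$ through $\otimes_A$ as right action via $\rho$); the residual right $A$-action coming from $M_{\rho'}$ then sends the generator $1 \otimes 1 \otimes 1$ to $\rho(\rho'(a)) \in A \otimes \CO_S \otimes \CO_S$, and applying $\id_A \otimes \delta^*$ to collapse the two $\CO_S$-factors collapses this to $(\rho \otimes 1)(\rho'(a)) = (\rho \ast \rho')(a)$, which is exactly the right action defining $M_{\rho \ast \rho'}$. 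I therefore conclude $\Phi(\rho \ast \rho') = \Phi(\rho) \ast \Phi(\rho')$, so $\CC_A$ is represented by the same scheme as $\CD_A$, namely $\Aut(A)$.
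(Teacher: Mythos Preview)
Your proof is correct and follows the same strategy as the paper: constructing an explicit natural isomorphism $\CD_A\iso\CC_A$ by twisting the right action on $A\otimes\CO_S$ via $\rho$ and recovering $\rho$ from the right action on the trivialized bimodule. You supply several details the paper leaves implicit---notably the right-freeness of $M_\rho$, the invertibility of the recovered $\rho\otimes 1$, and the compatibility with the group laws on both sides---but the underlying argument is identical.
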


\begin{proof}
On construit un isomorphisme $\CD_A\iso\CC_A$.

Soit $\rho\in\CD_A(S)$. On d\'efinit $M=A\otimes\CO_S$ comme
$(A\otimes\CO_S)$-module et l'action de $a\in A^\circ$ est donn\'ee
par multiplication \`a droite par $\rho(a)$. On prend pour
$f$ l'identit\'e. Alors, $(M,f)\in\CC_A(S)$ et on a construit
un morphisme de foncteurs $\CD_A\to\CC_A$.

On construit l'inverse comme suit. Soit $(M,f)\in\CC_A(S)$.
On a un isomorphisme canonique (multiplication \`a droite)
$$A\otimes\CO_S\iso \End_{A\otimes\CO_S}(A\otimes\CO_S).$$
Via $f$, il induit un isomorphisme
$$\alpha:A\otimes\CO_S\iso \End_{A\otimes\CO_S}(M).$$
Finalement, on obtient un morphisme d'alg\`ebres
$$\rho:A\to \End_{A\otimes\CO_S}(M)\Rarr{\alpha^{-1}} A\otimes\CO_S$$
o\`u la premi\`ere fl\`eche est donn\'ee par l'action \`a droite de $A$ sur
$M$. Alors, $\rho\in\CD_A(S)$.
\end{proof}

\subsubsection{}
\label{IntOut}

Soit $\eps:k\to A\otimes A^*\to A\otimes\CO_A\to A\otimes\CO_{A^\times}$ o\`u
la premi\`ere fl\`eche est le morphisme canonique, la seconde fl\`eche
est l'inclusion des fonctions lin\'eaires sur $A$ dans l'espace des
fonctions polynomiales et la troisi\`eme fl\`eche provient de
l'inclusion de $A^\times$ dans $A$.
Alors, le morphisme $A\to A\otimes \CO_{A^\times}$ associ\'e par
la correspondance de \S \ref{familles}
au morphisme de conjugaison $A^\times\to\Aut(A)$ est donn\'e par
$a\mapsto \eps(1)a\eps(1)^{-1}$.

Soit $(M,f)\in\CC_A(S)$. Alors, le morphisme associ\'e
$\pi:S\to \Aut(A)$ se factorise par $A^\times$
si et seulement si
$M\simeq A\otimes\CO_S$. Via un tel isomorphisme, $f$ est la
multiplication 
\`a gauche par un \'el\'ement $\zeta\in\Gamma(A\otimes\CO_S)^\times$.
Le morphisme $\rho$ associ\'e est $a\mapsto \zeta a \zeta^{-1}$.

Plus g\'en\'eralement,
le morphisme $S\to\Aut(A)$ associ\'e \`a
$(M,f)\in\CC_A(S)$ est \`a valeur dans $\Int(A)$ si et seulement
si $M$ est localement isomorphe \`a $A\otimes\CO_S$
(cf proposition \ref{scissionInt}).

Le morphisme d\'eduit $S\to\Out(A)$ ne d\'epend que de $M$, et pas de $f$.

\subsection{Familles d'automorphismes ext\'erieurs}
\subsubsection{}
Commen\c{c}ons par rappeler l'interpr\'etation classique
des automorphismes ext\'erieurs en terme de bimodules (cas ponctuel).

Un $A^\en$-module $M$ est inversible s'il existe un $A^\en$-module $N$ tel
que $M\otimes_A N\simeq N\otimes_A M\simeq A$ comme $A^\en$-module.
Soit $\Pic(A)$ le groupe des classes d'isomorphisme
de $A^\en$-modules inversibles.

Soit $f\in\Aut(A)$. Soit $A_f$ le $A^\en$-module
\'egal \`a $A$ comme $A$-module, o\`u l'action de $A^\circ$ se fait par
multiplication \`a droite pr\'ec\'ed\'ee de $f$: c'est la construction 
de la preuve de la proposition \ref{pr:fonctAut} pour
$S=\Spec k$. Alors, $A_f$ est inversible et
on obtient un morphisme de groupes $\Aut(A)\to\Pic(A)$ qui se factorise
en une injection $\Out(A)\to \Pic(A)$.
Elle identifie $\Out(A)$ aux classes de modules $M$ qui sont libres
de rang $1$ comme $A$-module et comme $A^\circ$-module.
Le sous-groupe $\Out^K(A)$ s'identifie au noyau du morphisme
canonique $\Pic(A)\to\Aut(K_0(A))$, o\`u $K_0(A)$ est le groupe de
Grothendieck de la cat\'egorie des $A$-modules de type fini. En particulier,
$\Out^K(A)$ est un sous-groupe d'indice fini de $\Pic(A)$.

\subsubsection{}
\label{PicAX}
Soit $S$ une vari\'et\'e. On munit la cat\'egorie
des 
$(A^\en\otimes \CO_S)$-modules d'une structure mono\"{\i}dale
donn\'ee par
$M\boxtimes N= M\otimes_{A\otimes\CO_S}N$.
On pose $M^\vee=\Hom_{A\otimes\CO_S}(M,A\otimes\CO_S)$.

On note $\Pic_A(S)$ le groupe des classes d'isomorphisme
de $(A^\en\otimes\CO_S)$-modules inversibles.
On note $\Pic^f_A(S)$ le groupe des classes d'isomorphisme de
$(A^\en \CO_S)$-modules $M$ qui sont
localement libres de rang $1$ comme
$(A\otimes\CO_S)$-module et comme $(A^\circ\otimes\CO_S)$-module. C'est
un sous-groupe de $\Pic_A(S)$.

\smallskip
Soit $M\in\Pic_A(S)$ et soit $x$ un point ferm\'e de $S$. Alors,
$M(x)$ est un $A^\en$-module inversible et
$M(x)\otimes_A -$ induit un automorphisme de $K_0(A)$
(provenant d'une permutation de l'ensemble des classes de modules simples).
Si cet automorphisme est trivial, alors
$M(x)$ est libre de rang $1$ comme $A$-module et comme $A^\circ$-module.
En particulier, si les automorphismes de $K_0(A)$ sont
triviaux pour tout $x$, alors $M\in\Pic^f_A(S)$.

\smallskip
On dispose d'un morphisme canonique
$$\Pic_A(S)\to \Pic_{A/JA}(S),\ M\mapsto A/JA\otimes_A M\otimes_A A/JA$$
se restreignant en
$\Pic_A^f(S)\to \Pic_{A/JA}^f(S)$.
On note $\CB_A(S)$ le noyau de ce morphisme.

\smallskip
Soient $A_1,A_2$ deux $k$-alg\`ebres de dimension finie. On dispose
d'un isomorphisme canonique
$$\CB_{A_1}(S)\times\CB_{A_2}(S)\iso \CB_{A_1\times A_2}(S),\ (M_1,M_2)\mapsto
M_1\oplus M_2.$$

\smallskip
Soit $\phi:S'\to S$ un morphisme de vari\'et\'es. On dispose
d'un morphisme canonique
$\phi^*:\Pic_A(S)\to\Pic_A(S')$ se restreignant en
$\CB_A(S)\to \CB_A(S')$.

\subsubsection{}
Soit $M\in\Pic_A^f(S)$.
Il existe un recouvrement ouvert
$\CF$ de $S$ tel que $M_{|U}$ est libre de rang $1$ comme
$(A\otimes\CO_U)$-module et comme $(A^\circ\otimes\CO_U)$-module, pour tout
$U\in\CF$.
Fixons un isomorphisme de $(A\otimes\CO_U)$-modules
$f_U:M_{|U}\iso A\otimes\CO_U$, pour tout $U\in\CF$.

On dispose alors (cf \S \ref{CA}), pour $U\in\CF$, d'un morphisme
$U\to \Aut(A)$, donc par composition, d'un morphisme $U\to\Out(A)$.
Puisque ces morphismes sont ind\'ependants du choix
des $f_U$ (cf \S \ref{IntOut}),
ils se recollent. On obtient ainsi un morphisme $S\to\Out(A)$.
On a ainsi construit un morphisme de groupes
$\Pic_A^f(S)\to \Hom(S,\Out(A))$.

\begin{lemme}
\label{lemmeOut0}
Il existe $M\in\Pic_A^f(\Out(A))$ induisant l'identit\'e
de $\Out(A)$.
\end{lemme}

\begin{proof}
Soit $S=\Out^0(A)$.
D'apr\`es \S \ref{secOutscinde},
il existe un recouvrement ouvert $\CF$
de $S$ et des morphismes
$\psi_U:U\to F(A)$ relevant l'immersion ouverte $U\to \Out^0(A)$
pour $U\in\CF$. Quitte \`a raffiner $\CF$, on peut
supposer que $(\psi_U)_{|U\cap V}\times(\psi_V)_{|U\cap V}^{-1}:
U\cap V\to F(A)\cap\Int(A)$ se factorise par $\alpha_{U,V}:
U\cap V\to (1+J(A))\rtimes T$,
o\`u $T$ est un sous-tore de $(ZS)^\times$ tel que
$(ZS)^\times=T\times \bigl((ZA)^\times\cap S^\times)\bigr)$
(cf \S \ref{secOutscinde}). Soit $\rho_U:A\to A\otimes \CO_U$ le
morphisme de faisceaux sur $U$ associ\'e \`a $\phi_U$ (cf \S \ref{familles}).

Soit $N_U=A\otimes\CO_U$ comme $(A\otimes\CO_U)$-module, muni
de l'action de $A^\circ$ donn\'ee par multiplication pr\'ec\'ed\'ee de $\rho_U$.
On a un isomorphisme de $(A^\en\otimes\CO_{U\cap V})$-modules
$\phi_{U,V}:(N_U)_{|U\cap V}\iso (N_V)_{|U\cap V}$ donn\'e par
multiplication \`a droite par l'\'el\'ement $\beta_{U,V}$
de $A\otimes\CO_{U\cap V}$ d\'efini par $\alpha_{U,V}$.

Soit $$c_{U,V,W}=(\beta_{U,W})_{|U\cap V\cap W}^{-1}
(\beta_{V,W})_{|U\cap V\cap W}(\beta_{U,V})_{|U\cap V\cap W}.$$
On a $c_{U,V,W}\in 1+J(ZA)\otimes\CO_{U\cap V\cap W}$.
Soit $\zeta$ la classe correspondante dans
$\check{H}^2(S,1+J(ZA)\otimes\CO_S) \subset H^2(S,1+J(ZA)\otimes\CO_S)$.
On a $H^2(S,1+(JZA)^i/(JZA)^{i+1}\otimes\CO_S)=0$ puisque
$1+(JZA)^i/(JZA)^{i+1}\otimes\CO_S$ est un faisceau coh\'erent et
$\Out^0(A)$ est affine.
Par cons\'equent, $H^2(S,1+J(ZA)\otimes\CO_S)=0$, donc
$\zeta=0$. On en d\'eduit qu'il existe un choix des
$\alpha_{U,V}$ tel que $c_{U,V,W}=0$ et les $N_U$ se recollent
en un $(A^\en\otimes\CO_S)$-module $N$. Puisque
$A/JA\otimes_A N_U\otimes_A A/JA\simeq A/JA\otimes\CO_U$ et
$\phi_{U,V}$ induit l'identit\'e
$A/JA\otimes_A (N_U)_{|U\cap V}\otimes_A A/JA\iso
A/JA\otimes_A (N_V)_{|U\cap V}\otimes_A A/JA$, on d\'eduit
que $A/JA\otimes_A N\otimes_A A/JA\simeq A/JA\otimes\CO_S$. Par
cons\'equent, $N\in\CB_A(S)$.
On a montr\'e que $N$ induit
l'injection canonique $\Out^0(A)\to \Out(A)$.

\smallskip
Soit $\CS$ un ensemble de repr\'esentants des classes \`a droite 
$\Out(A)/\Out^0(A)$ (vus comme points ferm\'es de $\Out(A)$).
On a $\Out(A)=\coprod_{g\in \CS}\Out^0(A)g$.
Soit $g\in \CS$ et $\tg\in\Aut(A)$ relevant $g$.
On consid\`ere la multiplication \`a droite
$g^{-1}:\Out^0(A)g\to\Out^0(A)$. Alors,
$g^{-1*}(N)\otimes_A A_\tg\in \Pic_A^f(\Out^0(A)g)$ induit
l'injection canonique $\Out^0(A)g\to\Out(A)$. Par cons\'equent,
$M=\bigoplus_{g\in \CS}(g^{-1*}M)\otimes_A A_g\in\Pic_A^f(\Out(A))$
induit l'identit\'e de $\Out(A)$.
\end{proof}

\smallskip
Soit $M\in\Pic_A^f(S)$ tel qu'il existe un recouvrement ouvert
$\CF$ de $U$ avec des isomorphismes
de $(A^\en\otimes\CO_U)$-modules
$\phi_U:A\otimes\CO_U\iso M_{|U}$, pour $U\in\CF$.
Pour $U,V\in \CF$, soit
$$c_{U,V}=(\phi_U)_{|U\cap V}^{-1}\circ (\phi_V)_{|U\cap V}\in
\End_{A^\en \otimes \CO_{U\cap V}}(A\otimes \CO_{U\cap V})=
(ZA\otimes\CO_{U\cap V})^\times.$$
Les $c_{U,V}$ d\'efinissent une classe de cohomologie de \v{C}ech dans
$\check{H}^1(S,(ZA\otimes\CO_S)^\times)=
H^1(S,(ZA\otimes\CO_S)^\times)$.

R\'eciproquement, un $1$-cocycle de \v{C}ech \`a valeurs dans
$(ZA\otimes\CO_S)^\times$ d\'efinit un recollement des
$A\otimes\CO_U$ en un $(A^\en\otimes\CO_S)$-module dans
$\Pic_A^f(S)$ et on obtient un morphisme canonique
$H^1(S,(ZA\otimes\CO_S)^\times)\to \Pic_A^f(S)$, inverse \`a droite du
pr\'ec\'edent.

\begin{prop}
\label{Pic}
On a un diagramme commutatif de suites horizontales et verticales exactes
et dont les suites horizontales et la suite verticale gauche sont 
scind\'ees
$$\xymatrix{
& 1\ar[d] & 1\ar[d]& 1\ar[d] \\
1\ar[r] & H^1(S,1+JZA\otimes\CO_S) \ar[r]\ar[d] & \CB_A(S)\ar[r]\ar[d] &
\Hom(S,\Out^K(A)) \ar[r]\ar[d]\ar@/_1pc/@{.>}[l] & 1 \\
1\ar[r] & H^1(S,(ZA\otimes\CO_S)^\times) \ar[r]\ar[d] &
 \Pic_A^f(S)\ar[r]\ar[d] & \Hom(S,\Out(A)) \ar[r]\ar[d]\ar@/_1pc/@{.>}[l] & 1 \\
1\ar[r] & H^1(S,(ZA/JZA\otimes\CO_S)^\times) \ar[r]\ar[d]
 \ar@/^1pc/@{.>}[u] &
 \Pic_{A/JA}^f(S)\ar[d] \ar[r] & \Hom(S,\Out(A/JA))\ar[d]\ar[r]
 \ar@/_1pc/@{.>}[l]& 1\\
& 1 & 1 & 1
}$$
\end{prop}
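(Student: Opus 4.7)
The plan is to assemble the diagram in three stages, using Lemma~\ref{lemmeOut0} and the \v{C}ech cocycle description developed in the paragraph preceding the statement as the main tools.

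\emph{Stage 1 (horizontal rows)}. The middle row is essentially packaged in the preceding discussion. For $M \in \Pic_A^f(S)$, the local bimodule trivializations $\phi_U : A \otimes \CO_U \iso M_{|U}$ produce simultaneously a \v{C}ech cocycle $c_{U,V} \in (ZA \otimes \CO_{U \cap V})^\times$ and, via \S\ref{IntOut}, a well-defined morphism $\phi_M : S \to \Out(A)$. The cocycle identifies the kernel of $\Pic_A^f(S) \to \Hom(S, \Out(A))$ with $H^1(S, (ZA \otimes \CO_S)^\times)$, while both surjectivity and a group-theoretic splitting come from pullback of the universal class $N \in \Pic_A^f(\Out(A))$ of Lemma~\ref{lemmeOut0} along $\phi_M$. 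The top row is then obtained by restriction: if $M \in \CB_A(S)$ then its image in $\Pic_{A/JA}^f(S)$ is trivial, which forces $\phi_M$ to factor through $\Out^K(A)$ and, after adjusting by a coboundary, forces the transition cocycle to take values in $1 + JZA \otimes \CO_S$. The bottom row is the same construction applied to the semisimple algebra $A/JA$.

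\emph{Stage 2 (left vertical)}. Apply cohomology to the short exact sequence of abelian sheaves on $S$
\[
1 \to 1 + JZA \otimes \CO_S \to (ZA \otimes \CO_S)^\times \to (ZA/JZA \otimes \CO_S)^\times \to 1.
\]
Since $k$ is algebraically closed and $ZA/JZA$ is a reduced commutative finite-dimensional $k$-algebra, hence isomorphic to $k^r$, lifting its idempotents yields a $k$-algebra section $ZA/JZA \hookrightarrow ZA$ which splits the sequence at the sheaf level. Applying $H^*$ degree by degree then produces split short exact sequences, yielding the left column in degree $1$.

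\emph{Stage 3 (middle and right verticals)}. The middle column is exact at $\Pic_A^f(S)$ by the definition of $\CB_A(S)$. Its surjectivity onto $\Pic_{A/JA}^f(S)$ reduces, via the splittings of the horizontal rows, to two separate liftings: (a) cohomology classes from $H^1(S, (ZA/JZA \otimes \CO_S)^\times)$ lift to $H^1(S, (ZA \otimes \CO_S)^\times)$ by Stage~2, and (b) morphisms $S \to \Out(A/JA)$ lift to morphisms $S \to \Out(A)$ by an obstruction analysis in the spirit of Lemma~\ref{lemmeOut0}, relying on the vanishing of $H^2$ of the unipotent subsheaves $1 + (JZA)^i / (JZA)^{i+1} \otimes \CO_S$. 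Once the middle column is established as a short exact sequence, the right column is recovered by quotienting the middle column by the left column, with its exactness following from a nine-lemma-style diagram chase using the three horizontal SES and the two already-constructed vertical SES. The main obstacle is precisely step~(b): controlling the lifting of $\Out$-valued morphisms from $A/JA$ to $A$ in families, which is where the $H^2$-vanishing from Lemma~\ref{lemmeOut0} and the Wedderburn section from Stage~2 must be combined non-formally.
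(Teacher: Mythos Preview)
Your Stages~1 and~2 are essentially the paper's argument: the middle row comes from the local cocycle description and \S\ref{IntOut}, its surjectivity and splitting from pulling back the universal class of Lemma~\ref{lemmeOut0}, and the left column is split by a Wedderburn section of $ZA\to ZA/JZA$. The paper's proof stops there.

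The genuine gap is Stage~3(b). The map $\Out(A)\to\Out(A/JA)$ is \emph{not} surjective in general, so no cohomological obstruction argument can manufacture the lift you want. Take $A$ to be the path algebra of the quiver $1\to 2$: then $A/JA\simeq k\times k$ has $\Out(A/JA)\simeq\BZ/2$, but the two indecomposable projective $A$-modules have different dimensions, so no automorphism of $A$ swaps the simples and the map $\Out(A)\to\Out(A/JA)$ is trivial. The $H^2$-vanishing in Lemma~\ref{lemmeOut0} controls the unipotent kernel $1+J(ZA)$; it says nothing about extending an automorphism of $A/JA$ across the bimodule $JA/J^2A$, which is where the actual obstruction lives. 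You correctly flag step~(b) as ``the main obstacle,'' and it is---but because the claim is false, not because the argument is delicate.

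In practice this does not matter: exactness of the middle and right columns at their middle terms is immediate from the definitions of $\CB_A(S)$ and $\Out^K(A)$, and the applications in the paper (Theorems~\ref{equivMorita}, \ref{invarderiv}, \ref{Out0stable}) only use the horizontal rows and the left column. The bottom surjectivities are neither needed nor supplied by the paper's own proof.
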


\begin{proof}
L'exactitude de
$$1\to  H^1(S,(ZA\otimes\CO_S)^\times) \to \Pic_A^f(S)\to \Hom(S,\Out(A))$$
r\'esulte de l'\'etude ci-dessus et de \S \ref{IntOut}.

\smallskip
Soit $M\in\Pic_A^f(\Out(A))$ fourni par le lemme \ref{lemmeOut0}.
Soit $\pi:S\to\Out(A)$. Alors, $\pi^*M\in\Pic_A^f(S)$ et on d\'efinit
ainsi un morphisme de groupes $\Hom(S,\Out(A))\to \Pic_A^f(S)$, scission
du morphisme canonique. Ce morphisme se restreint en
$\Hom(S,\Out^K(A))\to \CB_A(S)$.

\smallskip
La suite exacte 
$$1\to 1+JZA\otimes\CO_S \to (ZA\otimes\CO_S)^\times\to
(ZA/J(ZA)\otimes\CO_S)^\times\to 1$$
est scind\'ee~:
$$(ZA\otimes\CO_S)^\times=\left(1+JZA\otimes\CO_S\right)\times
\left(((ZA)^\times\cap S_0^\times)\times\CO_S^\times\right)$$
o\`u $S_0$ est une sous-alg\`ebre semi-simple maximale de $A$.
On en d\'eduit que la suite verticale gauche est exacte et scind\'ee.
\end{proof}

\begin{rem}
\label{trivK0}
Le sous-groupe $\CB_A(S)/H^1(S,1+J(ZA)\otimes\CO_S)$ 
de $\Pic_A(S)/H^1(S,(ZA\otimes\CO_S)^\times)$ consiste en
les \'el\'ements qui induisent
un automorphisme trivial de $K_0(A)$ en tout point ferm\'e.
\end{rem}

\begin{rem}
On a $H^1(S,(ZA\otimes\CO_S)^\times)=\Pic(S\times\Spec ZA)$. Si
$\CL$ est un fibr\'e inversible sur $S\times\Spec ZA$, alors
on lui associe $M=\CL\otimes_{ZA}A\in\Pic_A^f(S)$.
\end{rem}

\subsubsection{}

Soit $S$ une vari\'et\'e. Soit $\overline{\Pic}^f_A(S)$ le quotient du groupe
des classes d'isomorphisme de
$(A^\en\otimes\CO_S)$-bimodules qui sont localement
libres de rang $1$ comme $(A\otimes\CO_S)$-modules et comme
$(A^\circ\otimes\CO_S)$-modules par le sous groupe des bimodules
de la forme $\CL\otimes_{ZA}A$, o\`u $\CL$ est un fibr\'e inversible sur
$S\times\Spec ZA$. La proposition \ref{Pic} d\'ecrit ce foncteur:

\begin{thm}
\label{repOut}
Le foncteur $\overline{\Pic}^f_A$ est repr\'esent\'e par $\Out(A)$.
\end{thm}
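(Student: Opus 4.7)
Le plan est de d\'eduire ce th\'eor\`eme directement de la proposition \ref{Pic}, qui en contient d\'ej\`a tout le contenu substantiel~: il ne s'agit plus que de l'interpr\'eter comme un \'enonc\'e de repr\'esentabilit\'e apr\`es passage au quotient appropri\'e.

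D'abord, je partirais de la suite horizontale centrale de la proposition \ref{Pic}, qui fournit la suite exacte scind\'ee
$$1\to H^1(S,(ZA\otimes\CO_S)^\times)\to \Pic_A^f(S)\to \Hom(S,\Out(A))\to 1.$$
Ensuite, j'invoquerais l'identification $H^1(S,(ZA\otimes\CO_S)^\times)\simeq \Pic(S\times\Spec ZA)$ mentionn\'ee dans la remarque qui pr\'ec\`ede le th\'eor\`eme, ainsi que la description explicite de l'inclusion correspondante sous la forme $\CL\mapsto \CL\otimes_{ZA}A$. Le point clef est alors la co\"{\i}ncidence suivante~: le sous-groupe par lequel on quotiente $\Pic_A^f(S)$ pour d\'efinir $\overline{\Pic}_A^f(S)$ est pr\'ecis\'ement l'image de cette inclusion. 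En passant au quotient, on obtient donc un isomorphisme de groupes
$$\overline{\Pic}_A^f(S)\iso \Hom(S,\Out(A)).$$

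Il reste \`a v\'erifier que cet isomorphisme est fonctoriel en $S$ et qu'il provient effectivement d'un objet universel. Pour la fonctorialit\'e, toutes les constructions en jeu (image r\'eciproque de bimodules, produit tensoriel avec $A$, cohomologie de \v{C}ech) sont manifestement compatibles aux morphismes $S'\to S$. Pour l'objet universel, je m'appuierais sur le bimodule $M\in\Pic_A^f(\Out(A))$ construit au lemme \ref{lemmeOut0}~: sa classe dans $\overline{\Pic}_A^f(\Out(A))$ correspond \`a l'identit\'e $\id_{\Out(A)}\in\Hom(\Out(A),\Out(A))$ par l'isomorphisme ci-dessus, et tout \'el\'ement de $\overline{\Pic}_A^f(S)$ est obtenu par image r\'eciproque sous l'unique morphisme associ\'e $\pi:S\to\Out(A)$. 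La difficult\'e principale ayant d\'ej\`a \'et\'e r\'egl\'ee dans le lemme \ref{lemmeOut0} --- \`a savoir l'annulation de l'obstruction cohomologique $\zeta\in H^2(S,1+J(ZA)\otimes\CO_S)$ qui permet le recollement global du bimodule universel --- ce th\'eor\`eme ne requiert pas d'argument technique suppl\'ementaire significatif.
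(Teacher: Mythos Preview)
Your proposal is correct and follows exactly the approach of the paper, which simply states that ``La proposition \ref{Pic} d\'ecrit ce foncteur'' before stating the theorem without further proof. You have faithfully unpacked what that sentence means: quotienting the middle row of the diagram in Proposition \ref{Pic} by $H^1(S,(ZA\otimes\CO_S)^\times)=\Pic(S\times\Spec ZA)$ yields the natural isomorphism $\overline{\Pic}^f_A(S)\iso\Hom(S,\Out(A))$, and the universal object is the class of the bimodule $M$ from Lemma \ref{lemmeOut0}.
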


\smallskip

Supposons maintenant que $S$ est un groupe alg\'ebrique.
Soit $M\in\Pic_A^f(S)$. Alors, $M$ induit un morphisme de groupes
alg\'ebriques $S\to\Out(A)$ si et seulement si
$m^*M\simeq M\otimes_A M$, o\`u $m:S\times S\to S$ est la multiplication.

\section{Invariance du groupe des automorphismes ext\'erieurs}
Soient $A$ et $B$ deux $k$-alg\`ebres de dimension finie.

\subsection{Equivalences de Morita}

Soient $L$ un $(A\otimes B^\circ)$-module et $L'$ un
$(B\otimes A^\circ)$-module tels que
$$L\otimes_B L'\simeq A\text{ comme }B^\en\textrm{-modules}$$
$$L'\otimes_A L\simeq B\text{ comme }B^\en\textrm{-modules}.$$

Soit $S$ une vari\'et\'e.
Les foncteurs $L'\otimes_A-\otimes_A L$ et $L\otimes_B -\otimes_B L'$
induisent des
isomorphismes inverses $\Psi_L:\Pic_A(S)\iso \Pic_B(S)$
et $\Psi_{L'}:\Pic_B(S)\iso \Pic_A(S)$.

\begin{lemme}
\label{lemmeMorita}
Les isomorphismes $\Psi_L$ et $\Psi_{L'}$
se restreignent en des isomorphismes inverses
$$\xymatrix{H^1(S,1+J(ZA)\otimes\CO_S)
\ar@<0.5ex>[r]^-{\sim}  & \ar@<0.5ex>[l]^-{\sim}
H^1(S,1+J(ZB)\otimes\CO_S)}.$$
Ils induisent
en des isomorphismes inverses
$$\xymatrix{\CB_A(S)/H^1(S,1+J(ZA)\otimes\CO_S)
\ar@<0.5ex>[r]^-{\sim}  & \ar@<0.5ex>[l]^-{\sim}
\CB_B(S)/H^1(S,1+J(ZB)\otimes\CO_S)}.$$
\end{lemme}

\begin{proof}
Soit $M\in\Pic_A(S)$ localement isomorphe \`a $A\otimes\CO_S$. Alors,
$\Psi_L(M)$ est localement isomorphe \`a
$B\otimes\CO_S$, donc $\Psi_L$ et $\Psi_{L'}$ se restreignent en
des isomorphismes inverses
$$\xymatrix{
H^1(S,(ZA\otimes\CO_S)^\times)
\ar@<0.5ex>[r]^-{\sim}  & \ar@<0.5ex>[l]^-{\sim}
H^1(S,(ZB\otimes\CO_S)^\times)}.$$

Pour la deuxi\`eme partie du lemme, il suffit de noter que si 
$M(x)$ induit un automorphisme trivial de $K_0(A)$, alors $\Psi_L(M)(x)$
induit un automorphisme trivial de $K_0(B)$ (cf remarque \ref{trivK0}).
\end{proof}

Le r\'esultat suivant (pour les groupes r\'eduits) est du \`a Brauer
\cite[Corollaire 2.2]{Po}.

\begin{thm}
\label{equivMorita}
Une \'equivalence $A\mMod\iso B\mMod$ induit
un isomorphisme de groupes alg\'ebriques $\Out^K(A)\iso
\Out^K(B)$, donc par restriction un isomorphisme 
$\Out^0(A)\iso \Out^0(B)$.
\end{thm}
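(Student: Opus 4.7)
Mon plan est d'appliquer le lemme de Yoneda \`a la repr\'esentation fonctorielle de $\Out^K$ fournie par la proposition \ref{Pic}, en utilisant l'invariance de Morita du lemme \ref{lemmeMorita}. L'id\'ee est qu'une fois sortis $\Out^K(A)$ et $\Out^K(B)$ de la partie moduli du diagramme de la proposition \ref{Pic}, l'\'equivalence de Morita agit dessus via le transport de bimodules, ce qui est pr\'ecis\'ement ce qui a \'et\'e pr\'epar\'e dans le lemme \ref{lemmeMorita}.

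Concr\`etement, je choisirais d'abord des bimodules $L$ et $L'$ r\'ealisant l'\'equivalence $A\mMod\iso B\mMod$, avec $L\otimes_B L'\simeq A$ comme $A^\en$-modules et $L'\otimes_A L\simeq B$ comme $B^\en$-modules. Pour toute vari\'et\'e $S$, le lemme \ref{lemmeMorita} fournit alors un isomorphisme de groupes
$$\CB_A(S)/H^1(S,1+J(ZA)\otimes\CO_S)\iso \CB_B(S)/H^1(S,1+J(ZB)\otimes\CO_S),$$
manifestement naturel en $S$, puisque les foncteurs $\Psi_L$ et $\Psi_{L'}$ commutent aux images r\'eciproques $\phi^*$ pour $\phi:S'\to S$ (ils sont d\'efinis par produit tensoriel sur $A$ et $B$).

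D'apr\`es la ligne horizontale sup\'erieure du diagramme de la proposition \ref{Pic}, chacun de ces deux quotients s'identifie canoniquement et naturellement en $S$ \`a $\Hom(S,\Out^K(A))$ (resp.~$\Hom(S,\Out^K(B))$). En composant, j'obtiens un isomorphisme de foncteurs en groupes
$$\Hom(-,\Out^K(A))\iso \Hom(-,\Out^K(B))$$
sur la cat\'egorie des vari\'et\'es. Par Yoneda, ce dernier correspond \`a un isomorphisme de groupes alg\'ebriques $\Out^K(A)\iso \Out^K(B)$. Comme $\Out^0\subset \Out^K$ (\S\ref{secOutscinde}), sa restriction aux composantes neutres fournit l'isomorphisme annonc\'e $\Out^0(A)\iso \Out^0(B)$.

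L'essentiel de la preuve est ainsi d\'ej\`a encapsul\'e dans les \'enonc\'es pr\'ec\'edents, et il n'y a pas d'obstacle v\'eritable. Le seul point \`a surveiller est que l'on obtient bien un isomorphisme au niveau des groupes alg\'ebriques (et pas seulement des ensembles de $k$-points), ce que garantit pr\'ecis\'ement l'emploi du lemme de Yoneda sur la cat\'egorie des vari\'et\'es~; la compatibilit\'e aux structures de groupe est, elle, imm\'ediate, puisque la proposition \ref{Pic} et le lemme \ref{lemmeMorita} fournissent par construction des morphismes de groupes.
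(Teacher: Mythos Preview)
Your proposal is correct and follows exactly the paper's approach: the paper's proof consists of a single sentence invoking the lemma~\ref{lemmeMorita} and the proposition~\ref{Pic}, and you have simply made explicit the Yoneda argument and the naturality in $S$ that this sentence leaves implicit.
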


\begin{proof}
Le lemme \ref{lemmeMorita} et la proposition \ref{Pic}
fournissent l'isomorphisme $\Out^K(A)\iso \Out^K(B)$.
\end{proof}

\begin{rem}
Soit $A=k\times M_2(k)$ et $B=k\times k$. Alors,
$\Aut(A)=\GL_2(k)$, $\Out(A)=1$,
$\Aut(B)=\BZ/2$ et $\Out(B)=\BZ/2$. On voit dans ce cas
que $\Aut^0(A)\not\simeq\Aut^0(B)$ et que
$\Out(A)\not\simeq\Out(B)$.
\end{rem}

Le th\'eor\`eme montre que l'action par conjugaison de $\Pic(A)$ sur
$\Out^0(A)$ est alg\'ebrique. Ceci permet de munir $\Pic(A)$ d'une structure
de groupe alg\'ebrique (la structure de vari\'et\'e est donn\'ee par l'union disjointe
de copies de $\Out^0(A)$ param\'etr\'ees par $\Pic(A)/\Out^0(A)$). Alors,
l'isomorphisme du th\'eor\`eme \ref{equivMorita}
s'\'etend en un isomorphisme de groupes
alg\'ebriques entre $\Pic(A)$ et $\Pic(B)$.

\newpage
\subsection{Equivalences d\'eriv\'ees}
\subsubsection{}
\begin{lemme}
\label{semicoho}
Soit $C$ un complexe parfait de $\CO_S$-modules et $x\in S$ tel que
$H^i(C\otimes_{\CO_S}^\BL k(x))=0$ pour $i\not=0$. Alors,
il existe un voisinage ouvert $\Omega$ de $x$ tel que 
$H^i(C_{|\Omega})=0$ pour $i\not=0$.
\end{lemme}

\begin{proof}
Soit $f:M\to N$ un morphisme entre $\CO_S$-modules libres de type fini.

Si $f\otimes_{\CO_S} k(x)$ est surjectif, alors $f$ est surjectif dans
un voisinage
ouvert de $x$. Par dualit\'e, on en d\'eduit que si $f\otimes_{\CO_S} k(x)$
est injectif, alors $f$ est une injection scind\'ee dans un voisinage
ouvert de $x$.

Cela d\'emontre le lemme lorsque $C$ est un complexe born\'e de
$\CO_S$-modules libres de type fini. On en d\'eduit alors le lemme
lorsque $C$ est parfait, puisque qu'il existe un voisinage ouvert
$U$ de $x$ tel que $C_{|U}$ est quasi-isomorphe \`a un complexe
born\'e de $\CO_U$-modules libres de type fini.
\end{proof}

\subsubsection{}
\label{secequivderiv}
Soient $L\in D^b(A\otimes B^\circ)$ et
$L'\in D^b(B\otimes A^\circ)$ tels que
$$L\otimes_B^\BL L'\simeq A\text{ dans }D^b(A^\en)$$
$$L'\otimes_A^\BL L\simeq B\text{ dans }D^b(B^\en).$$

Soit $S$ une vari\'et\'e. Soit $\DPic_A(S)$ le groupe des classes
d'isomorphisme d'objets inversibles de $D^b(A^\en\otimes\CO_S)$
(l'inversibilit\'e est d\'efinie comme en \S \ref{PicAX}).
Les foncteurs $L'\otimes_A^\BL -\otimes_A^\BL L$ et
$L\otimes_B^\BL -\otimes_B^\BL L'$ induisent des
isomorphismes inverses $\Psi_L:\DPic_A(S)\iso \DPic_B(S)$
et $\Psi_{L'}:\DPic_B(S)\iso \DPic_A(S)$.

Soit $\Pic_A^0(S)$ le sous-groupe de $\Pic_A^f(S)$ des
\'el\'ements induisant un morphisme $S\to\Out^0(A)$.

\begin{lemme}
\label{lemmederive}
Les isomorphismes $\Psi_L$ et $\Psi_{L'}$
induisent des isomorphismes inverses
$$\xymatrix{\DPic_A(S)/H^1(S,(ZA\otimes\CO_S)^\times)
\ar@<0.5ex>[r]^-{\sim}  & \ar@<0.5ex>[l]^-{\sim}
\DPic_B(S)/H^1(S,(ZB\otimes\CO_S)^\times)}.$$
Ils se restreignent en des isomorphismes inverses
$$\xymatrix{\Pic_A^0(S)/H^1(S,(ZA\otimes\CO_S)^\times)
\ar@<0.5ex>[r]^-{\sim}  & \ar@<0.5ex>[l]^-{\sim}
\Pic_B^0(S)/H^1(S,(ZB\otimes\CO_S)^\times)}.$$
\end{lemme}

\begin{proof}
La premi\`ere assertion est toute aussi imm\'ediate que dans le lemme
\ref{lemmeMorita}.

\smallskip
Soit $S=\Out^0(A)$ et $M\in\CB_A(S)$ correspondant \`a l'injection
$\Out^0(A)\to\Out(A)$.
Soit $N=\Psi_L(M)$. On a $N(1)\simeq B$.
Il r\'esulte du lemme \ref{semicoho} qu'il
existe un voisinage ouvert $U$ de $1$ dans $S$ tel que $N_{|U}$
n'a de l'homologie qu'en degr\'e $0$ et quitte \`a r\'etr\'ecir $U$, on peut
supposer que $H^0(N)_{|U}$ est libre de rang $1$ comme
$(B\otimes\CO_U)$-module et comme $(B^\circ\otimes\CO_U)$-module.

On a $N(g)\otimes_B^\BL N(h)\simeq N(gh)$ pour $g,h\in S$. Si
$N(g)$ et $N(h)$ n'ont de l'homologie qu'en degr\'e $0$ et que celle-ci
est libre de rang $1$ comme $B$-module et comme $B^\circ$-module, alors
$N(gh)$ n'a de l'homologie qu'en degr\'e $0$ et celle-ci
est libre de rang $1$ comme $B$-module et comme $B^\circ$-module. 
Par cons\'equent, puisque $U$ engendre le groupe alg\'ebrique connexe
$\Out^0(A)$, cette propri\'et\'e est vraie pour tout $g\in S$:
$N$ n'a de l'homologie qu'en degr\'e $0$ et $H^0(N)$ est
localement libre de rang $1$ comme
$(B\otimes \CO_S)$-module et comme $(B^\circ\otimes \CO_S)$-module.
Par cons\'equent, $\Psi_L(M)\in\Pic_B^f(S)$.

\smallskip
Tout \'el\'ement de $\Pic_A^0(S)/H^1(S,(ZA\otimes\CO_S)^\times)$ est de la forme
$\phi^*M$, o\`u $\phi:S\to\Out^0(A)$ est un morphisme.
On a $\Psi_L\phi^*M\simeq \phi^*\Psi_L(M)\in \Pic_B^0(S)$ d'apr\`es
ce qui pr\'ec\`ede. On d\'eduit donc
que $\Psi_L$ et $\Psi_{L'}$ induisent des isomorphismes inverses
$$\xymatrix{\Pic_A^0(S)/H^1(S,(ZA\otimes\CO_S)^\times)
\ar@<0.5ex>[r]^-{\sim}  & \ar@<0.5ex>[l]^-{\sim}
\Pic_B^0(S)/H^1(S,(ZB\otimes\CO_S)^\times)}.$$
\end{proof}

Le r\'esultat suivant 
a \'et\'e obtenu ind\'ependemment, et par des m\'ethodes diff\'erentes,
par B.~Huisgen-Zimmermann et M.~Saor\`{\i}n \cite{HuiSa}. Ce r\'esultat
avait \'et\'e \'etabli pour des \'equivalences d\'eriv\'ees particuli\`eres
auparavant \cite{GuSa}.

\begin{thm}
\label{invarderiv}
Une \'equivalence $D^b(A)\iso D^b(B)$ induit
un isomorphisme de groupes alg\'ebriques $\Out^0(A)\iso\Out^0(B)$.
\end{thm}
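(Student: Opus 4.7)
Le plan est de comparer $\Out^0(A)$ et $\Out^0(B)$ via leurs foncteurs de points, en utilisant l'interpr\'etation modulaire fournie par le th\'eor\`eme \ref{repOut} et le transport donn\'e par le lemme \ref{lemmederive}. L'id\'ee est que chacun des deux quotients $\Pic_A^0(S)/H^1(S,(ZA\otimes\CO_S)^\times)$ et $\Pic_B^0(S)/H^1(S,(ZB\otimes\CO_S)^\times)$ s'identifie fonctoriellement \`a $\Hom(S,\Out^0(A))$, respectivement $\Hom(S,\Out^0(B))$, et il restera alors \`a invoquer le lemme de Yoneda.

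Je commencerais donc par expliciter, pour toute vari\'et\'e $S$, l'identification canonique $\Pic_A^0(S)/H^1(S,(ZA\otimes\CO_S)^\times)\iso\Hom(S,\Out^0(A))$. Elle r\'esulte directement de la suite exacte horizontale m\'ediane de la proposition \ref{Pic}, qui identifie d\'ej\`a $\Pic_A^f(S)/H^1(S,(ZA\otimes\CO_S)^\times)$ \`a $\Hom(S,\Out(A))$: par d\'efinition, $\Pic_A^0(S)$ est l'image r\'eciproque du sous-groupe $\Hom(S,\Out^0(A))\subset \Hom(S,\Out(A))$, et l'identification se restreint donc en celle voulue.

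Ensuite, j'appliquerais le lemme \ref{lemmederive} pour obtenir, fonctoriellement en $S$, un isomorphisme entre les deux quotients ci-dessus pour $A$ et $B$. En composant avec les identifications pr\'ec\'edentes, j'obtiendrais un isomorphisme de foncteurs $\Hom(-,\Out^0(A))\iso\Hom(-,\Out^0(B))$, d'o\`u un isomorphisme de sch\'emas $\Out^0(A)\iso\Out^0(B)$ par Yoneda. Pour la compatibilit\'e aux lois de groupe, il suffit de remarquer que le produit tensoriel sur les $\Pic_A^0$ correspond, via le th\'eor\`eme \ref{repOut}, \`a la loi sur $\Out^0$, et que $\Psi_L=L'\otimes_A^\BL - \otimes_A^\BL L$ est compatible au produit tensoriel puisque $L\otimes_B^\BL L'\simeq A$ dans $D^b(A^\en)$.

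L'obstacle principal est d\'ej\`a trait\'e en amont dans le lemme \ref{lemmederive}: la difficult\'e est de v\'erifier que $\Psi_L$, qui a priori prend ses valeurs dans $\DPic_B(S)$, envoie en fait $\Pic_A^0(S)$ dans $\Pic_B^0(S)$, c'est-\`a-dire que le complexe image est concentr\'e en degr\'e $0$ et localement libre de rang $1$ des deux c\^ot\'es. L\`a, l'argument utilise le lemme \ref{semicoho} pour obtenir cette propri\'et\'e au voisinage de l'identit\'e de $\Out^0(A)$, puis la connexit\'e de $\Out^0(A)$ combin\'ee \`a la compatibilit\'e de $\Psi_L$ au produit tensoriel pour la propager \`a tout $S$. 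Une fois ce point acquis, le pr\'esent th\'eor\`eme en est une cons\'equence formelle.
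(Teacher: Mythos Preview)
Your approach is the same as the paper's: invoke the lemma \ref{lemmederive} together with the moduli description of $\Out$ given by theorem \ref{repOut} (equivalently, proposition \ref{Pic}), and conclude by Yoneda. The one step you have not made explicit is the passage from an abstract equivalence $D^b(A)\iso D^b(B)$ to a pair of complexes $L\in D^b(A\otimes B^\circ)$, $L'\in D^b(B\otimes A^\circ)$ satisfying $L\otimes_B^\BL L'\simeq A$ and $L'\otimes_A^\BL L\simeq B$; this is Rickard's theorem \cite{Ri}, and the paper invokes it at the outset before applying the lemma.
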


\begin{proof}
La th\'eorie de Rickard \cite{Ri} fournit des complexes $L$ et $L'$
comme plus haut.
Le th\'eor\`eme r\'esulte alors du lemme \ref{lemmederive} et du th\'eor\`eme
\ref{repOut}.
\end{proof}

\subsubsection{}
Soit $\DPic(A)$ le groupe des classes d'isomorphisme d'objets inversibles
de $D^b(A^\en)$ (ce groupe a \'et\'e introduit dans \cite{RouZi,Ye1}).
Comme l'explique Yekutieli \cite{Ye2}, le th\'eor\`eme pr\'ec\'edent montre que
$\Out^0(A)$ est un sous-groupe distingu\'e de $\DPic(A)$ et
que l'action de $\DPic(A)$ par conjugaison sur $\Out^0(A)$ produit des
automorphismes de groupe alg\'ebrique. Ceci fournit une structure
de groupe localement alg\'ebrique (=sch\'ema en groupes s\'epar\'e localement
de type fini sur $k$) sur l'union disjointe de
copies de $\Out^0(A)$ param\'etr\'ees par le quotient $\DPic(A)/\Out^0(A)$.

On a finalement une famille de groupes (localement) alg\'ebriques, de
composante identit\'e $\Out^0(A)$~:
$$\xymatrix{
\Out^0(A) \ar@{^{(}->}[r]\ar@/_2pc/[rrrr]_{\textrm{distingu\'e}} &
 \Out^K(A) \ar@{^{(}->}[r] \ar@/^2pc/[rr]^{\textrm{distingu\'e}} &
 \Out(A) \ar@{^{(}->}[r] & \Pic(A) \ar@{^{(}->}[r] & \DPic(A)
}$$

\subsection{Equivalences stables \`a la Morita}
\label{secstable}
Soit $S$ une vari\'et\'e.

\subsubsection{Rigidit\'e des modules projectifs}

Soit $A$ une $k$-alg\`ebre de dimension finie.

\begin{lemme}
\label{rigiditeproj}
Soit $M$ un $(A\otimes\CO_S)$-module, localement libre de type fini
comme $\CO_S$-module.
Soit $x$ un point
ferm\'e de $S$ tel que $P=M(x)$ est un $A$-module
projectif.
Alors, il existe un voisinage ouvert $\Omega$ de $x$ tel que 
$M_{|\Omega}$ est isomorphe \`a $P\otimes\CO_\Omega$.
\end{lemme}

\begin{proof}
On peut supposer $S=\Spec R$, une vari\'et\'e affine.
Consid\'erons le morphisme canonique
$f:M\to P=M/\Gm_x M\simeq M\otimes_R R/\Gm_x$.
Le morphisme canonique
$P\otimes R\to P\otimes R/\Gm_x=P$ se factorise par $f$ en
$g:P\otimes R\to M$. Comme $fg$ est surjectif, il existe
un voisinage ouvert $\Omega$ de $x$ tel que $g_{|\Omega}$ est surjectif.
Puisque le rang de $M$ sur $R$ est la dimension de
$M\otimes_R R/\Gm_x$, c'est aussi le rang de $P\otimes R$ sur $R$. Par
cons\'equent,
$g_{|\Omega}$ est un isomorphisme.
\end{proof}

\subsubsection{Rigidit\'e des facteurs projectifs}
Soit $M$ un $(A\otimes\CO_S)$-module de type fini.
On note
$\rho_M$ l'application canonique
$$\rho_M:\Hom_{A\otimes \CO_S}(M,A\otimes\CO_S)\to
\Hom_{A\otimes\CO_S}(M,A/J(A)\otimes\CO_S).$$
On pose $\bar{M}=M/\bigcap_{f\in\im\rho_M}\ker f$. C'est un
$(A/JA\otimes\CO_S)$-module.

Si $P$ est un $A$-module projectif
et $M=P\otimes\CO_S$, alors $\rho_M$ est surjectif et
$\bar{M}=\hd(P)\otimes\CO_S$ (pour un $A$-module $V$, on note
$\hd(V)$ le plus grand quotient semi-simple de $V$).

\subsubsection{Cas ponctuel}
Nous rappelons comment trouver un facteur direct projectif maximal
lorsque $S=\Spec k$.

Le quotient $\bar{M}$ de $M$ est le plus grand tel que l'application
canonique $M\to\bar{M}$ est projective~:

\begin{lemme}
\label{factprojcorps}
Supposons $S=\Spec k$ et soit
$M$ un $(A\otimes\CO_S)$-module de type fini.
Alors, $M$ a un facteur direct projectif non nul si et seulement si
$\rho_M$ est non nulle.

Soit $P\to\bar{M}$ une enveloppe projective de $\bar{M}$. Alors,
l'application canonique $M\to\bar{M}$ se factorise en un morphisme
surjectif $M\to P$ dont le noyau n'a pas de facteur direct projectif non nul.
\end{lemme}

\begin{proof}
Soit $f\in\Hom_A(M,A)$ tel que $\im f\not\subseteq JA$. Alors,
il existe un $A$-module simple $S$ et un morphisme de $A$-modules
$A\to S$ tel que le compos\'e $M\to A\to S$ est non nul.
Ce compos\'e se factorise par une enveloppe projective
$P_S\to S$ de $S$ en un morphisme
$M\to P_S$, qui est surjectif (lemme de Nakayama). Par cons\'equent,
$P_S$ est facteur direct de $M$.

R\'eciproquement, si $M$ est projectif, alors $\bar{M}=\hd(M)$ et
$\rho_M$ est non nul. La deuxi\`eme partie du lemme est claire.
\end{proof}

\subsubsection{Rigidit\'e locale}

Pour le reste de \S \ref{secstable}, nous supposerons $A$ auto-injective
(\ie, $A$ est un $A$-module injectif).

Le r\'esultat suivant est classique (cf \cite[Corollaire 16]{DoFl} et
\cite[Th\'eor\`eme 3.16]{Da}).

\begin{lemme}
\label{factprojglobal}
Soit $M$ un $(A\otimes\CO_S)$-module, localement libre de type fini
comme $\CO_S$-module.
Soit $x$ un point ferm\'e de 
$S$ et soit $P$ un $A$-module projectif facteur direct de $M(x)$.
Alors, il existe un voisinage ouvert $\Omega$ de $x$
tel que $P\otimes\CO_\Omega$ est facteur direct de $M_{|\Omega}$.
\end{lemme}

\begin{proof}
On peut supposer $S=\Spec R$, une vari\'et\'e affine.
Puisque $A$ est auto-injective, $P^*$ est un $A$-module \`a droite
projectif.
On a $\Ext^1_{A\otimes R}(M,P\otimes \Gm_x)\simeq
\Ext^1_R(P^*\otimes_A M,\Gm_x)$
(cf par exemple \cite[\S 2.2.2]{Roucha}).
Puisque $P^*\otimes_A M$ est un $R$-module projectif, on a donc
$\Ext^1_{A\otimes R}(M,P\otimes \Gm_x)=0$.
Par cons\'equent, un morphisme surjectif $M\to P=P\otimes R/\Gm_x$ se
rel\`eve en morphisme $M\to P\otimes \CO_S$. Ce morphisme est
alors surjectif (et donc scind\'e) dans un voisinage ouvert $\Omega$ de $x$.
\end{proof}

\begin{rem}
Le r\'esultat n'est pas vrai pour des alg\`ebres non
auto-injectives, m\^eme lorsque $S=\Spec R$ avec $R$ locale.

Prenons pour $A$ l'alg\`ebre des matrices triangulaires
sup\'erieures $2\times 2$ sur $k$, $R=k[t]_{(t)}$ et
$M=\{\left(\begin{array}{c} a \\ bt\end{array}\right)\ |\ a,b\in R\}$.
Alors, $M$ est un $(A\otimes R)$-module ind\'ecomposable non projectif
et $R$-libre,
mais $M\otimes_R k$ est somme
directe des deux $A$-modules simples, l'un d'eux \'etant projectif.

Plus g\'en\'eralement, soit $A$ une $k$-alg\`ebre de dimension finie
qui n'est pas auto-injective
et soit $P$ un $A$-module projectif ind\'ecomposable
non injectif. Soit $f:P\to I$ une
enveloppe injective et $N=\coker f$. Soit $\zeta\in\Ext^1_A(N,P)$ d\'etermin\'e
par l'extension. Soit
$\xi=t\zeta\in\Ext^1_{A\otimes R}(N\otimes R,P\otimes R)$ et $M$
le $(A\otimes R)$-module extension de $N\otimes R$ par
$P\otimes R$ d\'etermin\'e par cette classe. Alors, le facteur direct
projectif $P$ de $M\otimes_R k$ ne se remonte pas en un facteur
direct projectif de $M$.
\end{rem}

\subsubsection{Cas g\'en\'eral}

Sous certaines hypoth\`eses, on peut ``recoller'' les facteurs
directs projectifs.

\begin{prop}
\label{factproj}
Soit $S$ une vari\'et\'e affine.
Soit $M$ un $(A\otimes\CO_S)$-module, localement libre de type fini
comme $\CO_S$-module
et soit $P$ un $A$-module projectif.
Supposons
\begin{itemize}
\item[(i)] pour tout point ferm\'e $x$, le $A$-module $P$ est
facteur direct de $M(x)$
\item[(ii)] pour tout point g\'en\'erique $\eta$ 
d'une composante irr\'eductible de $S$, le $A\otimes\CO_\eta$-module
$M_\eta$ n'a pas de facteur direct projectif contenant
strictement $P\otimes \CO_\eta$.
\end{itemize}

Alors, il existe un $(A\otimes \CO_S)$-module projectif $Q$ facteur
direct de $M$ tel que $Q(x)\simeq P$ pour tout point ferm\'e $x$.
\end{prop}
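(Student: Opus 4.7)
The plan is to reduce the proposition to a cohomological gluing on the affine $S$, combining local existence (from Lemma~\ref{factprojglobal} and (i)) with a pointwise rigidity forced by (ii). We may assume $S$ is irreducible with generic point $\eta$ and, by induction on the number of indecomposable summands of $P$, that $P$ is indecomposable, so that $B:=\End_A(P)$ is a local $k$-algebra with maximal ideal $\mathfrak{m}_B$. Applying (i) and Lemma~\ref{factprojglobal} at each closed point produces a finite affine open cover $\{U_i\}$ of $S$ equipped with decompositions $M_{|U_i}=Q_i\oplus N_i$ (with $Q_i\cong P\otimes\CO_{U_i}$) and associated inclusion--projection data $(\iota_i,\pi_i,\iota_{N_i},\pi_{N_i})$.

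The first key step is a rigidity at closed points: for every closed $z\in S$, the $A$-module $M(z)$ has no projective direct summand strictly larger than $P$. Indeed, any such enlargement would, by Lemma~\ref{factprojglobal}, spread to an affine neighbourhood of $z$ and restrict to a projective summand of $M_\eta$ strictly containing $P\otimes\CO_\eta$, violating (ii).

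This rigidity then forces the transitions $\beta_{ij}:=\pi_i\iota_j\in B\otimes\CO_{U_i\cap U_j}$ to be everywhere invertible. Applying the identity $\id_{M_{|U_i}}=\iota_i\pi_i+\iota_{N_i}\pi_{N_i}$ to $\iota_j$ and composing with $\pi_j$ yields
\[
\id_P=(\pi_j\iota_i)\,\beta_{ij}+(\pi_j\iota_{N_i})\circ(\pi_{N_i}\iota_j)\quad\text{in }B\otimes\CO_{U_i\cap U_j}.
\]
If $\beta_{ij}(z)\in\mathfrak{m}_B$ at some closed $z$, the second summand would be a unit in the local ring $B$, forcing its first factor $\pi_{N_i}\iota_j\colon P\to N_i(z)$ to be a monomorphism; since $P$ is injective (as $A$ is auto-injective), this mono would split and produce $P^{\oplus 2}$ as a direct summand of $M(z)$, contradicting the rigidity. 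Hence each $\beta_{ij}$ is a unit on all of $U_i\cap U_j$.

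The invertibility of the $\beta_{ij}$'s exhibits the local idempotents $e_i:=\iota_i\pi_i\in\End_{A\otimes\CO_{U_i}}(M_{|U_i})$ as equivalent on overlaps, reducing the gluing to a cohomological obstruction living in a quasi-coherent sheaf on $S$; the affineness of $S$ and the vanishing of $H^1$ for quasi-coherent sheaves then furnish a global idempotent $e\in\End_{A\otimes\CO_S}(M)$ whose image is the desired projective direct summand $Q$. The main obstacle is this last step: the complements $N_i$ vary with $i$, so formulating a \v{C}ech cocycle in a single fixed quasi-coherent sheaf requires care, and the universal invertibility of the $\beta_{ij}$'s---itself a crucial consequence of the auto-injectivity of $A$---is precisely what reduces the a priori nonabelian gluing to an abelian cohomological problem on the affine $S$.
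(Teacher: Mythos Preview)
Your local analysis is sound: the rigidity claim (no fibre $M(z)$ admits a projective summand strictly larger than $P$) and the ensuing invertibility of every $\beta_{ij}$ are correct and nicely argued. The gap is exactly the step you yourself flag as ``the main obstacle'': you assert that the invertibility of the $\beta_{ij}$ ``reduces the a priori nonabelian gluing to an abelian cohomological problem'', but you give no mechanism for this reduction, and I do not see one. Conjugacy of the idempotents $e_i,e_j$ on overlaps yields a $1$-cochain of units $u_{ij}$ in the sheaf $\Aut_{A\otimes\CO_S}(M)$, which is genuinely nonabelian; there is no vanishing theorem for such $H^1$ on affines (already $H^1_{\mathrm{Zar}}(S,\GL_n)$ classifies rank-$n$ projective $\Gamma(\CO_S)$-modules, which need not be free). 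Nor do the $\beta_{ij}$ themselves form a \v Cech cocycle: $\beta_{ij}\beta_{jk}-\beta_{ik}=\pi_i(e_j-\id)\iota_k=-\pi_i\iota_{N_j}\pi_{N_j}\iota_k$ has no reason to vanish. So the argument stops precisely where the global statement should be won.

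The paper sidesteps the gluing entirely by building a \emph{global} object from the outset. It defines $\bar M$ as the quotient of $M$ by the common kernel of those maps $M\to A/JA\otimes\CO_S$ that factor through $A\otimes\CO_S$. The local splitting $M_x\simeq M'\oplus P\otimes\CO_x$, together with hypothesis~(ii) at the generic points and the reducedness of $\CO_x$, forces $\rho_{M'}=0$, hence $(\bar M)_x\simeq\hd(P)\otimes\CO_x$ at every closed $x$. Thus $\bar M$ is $\CO_S$-locally free with semisimple $A$-action, say $\bar M\simeq\bigoplus_S S\otimes F_S$. A projective cover $Q=\bigoplus_S P_S\otimes F_S\twoheadrightarrow\bar M$ lifts through $M\twoheadrightarrow\bar M$ to $g:Q\to M$, and $g$ is split injective because its $\CO_S$-dual is a surjection from an $(A^\circ\otimes\CO_S)$-projective. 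No cover, no \v Cech data, and no need to reduce to $P$ indecomposable or $S$ irreducible.
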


\begin{proof}
Soit $x$ un point ferm\'e de $S$. D'apr\`es le lemme \ref{factprojglobal},
il existe un $(A\otimes\CO_x)$-module $M'$ tel que
$M_x\simeq M'\oplus P\otimes\CO_x$. 
Si $\eta$ est un point g\'en\'erique dont l'adh\'erence contient $x$,
alors le morphisme $\rho_{M'_\eta}$ est nul
puisque $M'_\eta$ n'a pas de facteur direct projectif
(lemme \ref{factprojcorps}).
On en d\'eduit que $\rho_{M'}$ est nul, puisque $\CO_x$ est r\'eduit.
Par cons\'equent, $(\bar{M})_x\simeq \overline{(M_x)}\simeq \hd(P)\otimes\CO_x$.

Le $(A\otimes\CO_S)$-module $\bar{M}$ est donc un $\CO_S$-module
projectif. Puisqu'il est $A$-semi-simple, il est somme directe
de modules $S\otimes F_S$, o\`u $S$ d\'ecrit les classes
d'isomorphisme de $A$-modules simples et $F_S$ est un $\CO_S$-module
projectif. Fixons alors un morphisme surjectif
$h:Q=\bigoplus_S P_S\otimes F_S\to \bar{M}$.
Ce morphisme se factorise par la surjection canonique $M\to\bar{M}$
en $g:Q\to M$. Le morphisme $\Hom_{\CO_S}(g,\CO_S)$ est surjectif,
car il l'est en tout point ferm\'e. Puisque $\Hom_{\CO_S}(Q,\CO_S)$ est
un $(A\otimes\CO_S)$-module projectif,
le morphisme $\Hom_{\CO_S}(g,\CO_S)$ est donc une surjection
scind\'ee et finalement $g$ est une injection scind\'ee.
\end{proof}

\begin{rem}
L'hypoth\`ese aux points g\'en\'eriques n'est pas superflue.

Soit $S=U\cup V$ un recouvrement ouvert de $S$ avec $U,V\not=S$ et $M',M''$
des $(A\otimes\CO_S)$-modules ind\'ecomposables, $\CO_S$-projectifs mais non
$(A\otimes\CO_S)$-projectifs, tels que
$M'_{|U}\simeq P\otimes\CO_U$ et $M''_{|V}\simeq P\otimes\CO_V$
pour un $A$-module projectif $P$.
Alors, $M=M'\oplus M''$ ne poss\`ede pas de facteur direct projectif,
bien que $P$ soit facteur direct de $M(x)$ pour tout point ferm\'e
$x$.
\end{rem}

Le lemme suivant montre qu'il n'est pas n\'ecessaire
de travailler avec un sch\'ema r\'eduit.

\begin{lemme}
\label{nilp}
Soit $R$ une $k$-alg\`ebre commutative, $I$ un id\'eal nilpotent de
$R$ et $M$ un $(A\otimes R)$-module de type fini, projectif comme $R$-module.
Alors, tout facteur direct projectif de $M\otimes_R R/I$ se rel\`eve 
en un facteur direct projectif de $M$.
\end{lemme}

\begin{proof}
Un $(A\otimes R/I)$-module projectif ind\'ecomposable de type fini est
de la forme $P\otimes L$ o\`u $P$ est un $A$-module projectif ind\'ecomposable
et $L$ un $R/I$-module projectif de type fini. Puisque tout
idempotent se rel\`eve \`a travers le morphisme canonique
$\End_R(R^n)\to \End_{R/I}((R/I)^n)$, alors, il existe un
$R$-module projectif de type fini $L'$ tel que $L'\otimes_{R} R/I\simeq L$.
Par cons\'equent, un $(A\otimes R/I)$-module projectif de type fini
se rel\`eve en un $(A\otimes R)$-module projectif de type fini.

Soit donc $N$ un
$(A\otimes R)$-module projectif et $f:M\to N\otimes_R R/I$ un
morphisme surjectif.
On a $\Ext^1_{A\otimes R}(M,N\otimes_R I)\simeq
\Ext^1_R(\Hom_R(N,R)\otimes_{A\otimes R} M,I)=0$ et
on d\'eduit comme dans la preuve
du lemme \ref{factprojglobal} que $f$ se rel\`eve en un morphisme surjectif
$M\to N$.
\end{proof}

\subsubsection{Invariance stable}
\label{secinvariancestable}
Supposons $A$ et $B$ auto-injectives. Soient
$L$ un $(A\otimes B^\circ)$-module de type fini et 
$L'$ un $(B\otimes A^\circ)$-module de type fini
induisant des \'equivalences stables
inverses entre $A$ et $B$, \ie, tels que
$$L\otimes_B L'\simeq A\oplus\text{ projectif comme }
A^\en\textrm{-modules}$$
$$L'\otimes_A L\simeq B\oplus \text{ projectif comme }
B^\en\textrm{-modules}.$$

\smallskip
Soit $S$ une vari\'et\'e.
Soit $(A^\en\otimes\CO_S)\mstab$ le quotient additif de la
cat\'egorie des $(A^\en\otimes\CO_S)$-modules de type
fini par la sous-cat\'egorie additive des modules localement projectifs.

Soit $\StPic_S(A)$ le groupe des classes d'isomorphisme des 
objets inversibles de $(A^\en\otimes\CO_S)\mstab$.

Les foncteurs $L'\otimes_A-\otimes_A L$ et $L\otimes_B -\otimes_B L'$
induisent des
isomorphismes inverses $\Psi_L:\StPic_A(S)\iso \StPic_B(S)$
et $\Psi_{L'}:\StPic_B(S)\iso \StPic_A(S)$.

\smallskip
Supposons $A$ et $B$ sans modules simples projectifs.
Alors, le morphisme canonique $\Pic_A(S)\to\StPic_A(S)$ est injectif
(on le d\'eduit imm\'ediatement du cas ponctuel).

\begin{lemme}
\label{lemmestable}
Supposons $A$ et $B$ sans modules simples projectifs.
Les isomorphismes $\Psi_L$ et $\Psi_{L'}$
induisent des isomorphismes inverses
$$\xymatrix{\StPic_A(S)/H^1(S,(ZA\otimes\CO_S)^\times)
\ar@<0.5ex>[r]^-{\sim}  & \ar@<0.5ex>[l]^-{\sim}
\StPic_B(S)/H^1(S,(ZB\otimes\CO_S)^\times)}.$$
Ils se restreignent en des isomorphismes inverses
$$\xymatrix{\Pic_A^0(S)/H^1(S,(ZA\otimes\CO_S)^\times)
\ar@<0.5ex>[r]^-{\sim}  & \ar@<0.5ex>[l]^-{\sim}
\Pic_B^0(S)/H^1(S,(ZB\otimes\CO_S)^\times)}.$$
\end{lemme}

\begin{proof}
La premi\`ere assertion est toute aussi imm\'ediate que dans le lemme
\ref{lemmeMorita}.

\smallskip
Soit $S=\Out^0(A)$ et $M\in\CB_A(S)$ correspondant \`a l'injection
$\Out^0(A)\to\Out(A)$.
Soit $N=\Psi_L(M)$. On a $N(1)\simeq B\oplus P$, o\`u $P$ est
un $B^\en$-module projectif de type fini.

Soit $H$ l'ensemble des points ferm\'es $g$ de $S$ tels qu'il existe
un $B^\en$-module $R$ libre de
rang $1$ comme $B$-module et comme $B^\circ$-module avec
$N(g)\simeq R\oplus P$.

D'apr\`es le lemme \ref{factprojglobal}, il existe un voisinage ouvert $\Omega$
de l'identit\'e de $S$ et un $(B^\en\otimes \CO_\Omega)$-module
$T$ tels que $N_{|\Omega}\simeq T\oplus P\otimes\CO_\Omega$.
En particulier, $P$ est facteur direct de $N(g)$
pour tout $g\in \Omega$. 

Puisque $T(1)=B$ est libre de rang $1$ comme $B$-module et comme 
$B^\circ$-module, il existe un voisinage
ouvert $U$ de l'identit\'e dans $\Omega$ tel que
$T_{|U}$ est libre de rang $1$ comme $(B\otimes\CO_U)$-module et comme
$(B^\circ\otimes\CO_U)$-module (lemme \ref{factprojglobal}).
Alors, l'ensemble des points ferm\'es de $U$ est
contenu dans $H$.

Au point g\'en\'erique $\eta$ de $\CO_S$, le module $T_\eta$ ne contient
pas de facteur direct projectif non nul, car l'alg\`ebre $B\otimes \CO_\eta$ 
ne contient pas de facteur direct simple.

Soient $g,h\in S$. Alors, 
$N(g)\otimes_B N(h)\simeq N(gh)\oplus\text{projectif}$.
Prenons $g,h\in H$ et d\'ecomposons $N(g)\simeq R_1\oplus P$ et
$N(h)\simeq R_2\oplus P$. Alors, $N(gh)\simeq R_1\otimes_B R_2\oplus
Q$ o\`u $Q$ est projectif. Puisque $R_1$ et $R_2$ sont libres
de rang $1$ comme $B$-modules et comme
$B^\circ$-modules, on en d\'eduit que $R_1\otimes_B R_2$ est aussi
libre de rang $1$ comme $B$-module et comme $B^\circ$-module.
Dans un voisinage ouvert $V$
de $gh$, on a $N\simeq R\oplus Q\otimes \CO_V$
o\`u $R$ est localement libre de rang $1$ comme $(B\otimes\CO_V)$-module
et comme $(B^\circ\otimes\CO_V)$-module.
Alors, $Q\otimes\CO_\eta\simeq P\otimes\CO_\eta$, donc $Q\simeq P$.
Par cons\'equent, $gh\in H$.

Ainsi, $H$ est un sous-groupe de $S$. Puisqu'il contient les points ferm\'es
d'un ouvert de $S$, c'est $S$ tout entier, car $S$ est connexe.

La proposition \ref{factproj} montre l'existence d'un
$(B^\en\otimes \CO_S)$-module $W$ et d'un
$(B^\en\otimes \CO_S)$-module projectif $Z$ tels que 
$N\simeq W\oplus Z$ et $Z(g)\simeq P$ pour tout $g\in S$. En outre,
$W$ est localement libre de rang $1$ comme $(B\otimes \CO_S)$-module
et comme $(B^\circ\otimes \CO_S)$-module. Par cons\'equent,
$\Psi_L(M)\simeq S\in\Pic_B^f(S)$.
On conclut maintenant comme dans le lemme \ref{lemmederive}.
\end{proof}

\begin{thm}
\label{Out0stable}
Une \'equivalence stable \`a la Morita entre $A$ et $B$ induit un isomorphisme
de groupes alg\'ebriques $\Out^0(A)\iso\Out^0(B)$.
\end{thm}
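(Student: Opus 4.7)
La strat\'egie est de reproduire le sch\'ema utilis\'e pour les th\'eor\`emes \ref{equivMorita} et \ref{invarderiv}~: combiner le lemme \ref{lemmestable} avec la repr\'esentabilit\'e fournie par le th\'eor\`eme \ref{repOut} et appliquer le lemme de Yoneda. L'obstacle principal est que le lemme \ref{lemmestable} n\'ecessite l'hypoth\`ese d'absence de modules simples projectifs, ce qui n'appara\^{\i}t pas dans l'\'enonc\'e du th\'eor\`eme \ref{Out0stable}. Il faut donc commencer par s'y ramener.

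On d\'ecompose $A=A_0\times A_{\mathrm{ss}}$, o\`u $A_{\mathrm{ss}}$ est le produit des blocs simples (alg\`ebres de matrices) de $A$, et de m\^eme $B=B_0\times B_{\mathrm{ss}}$. Puisque $\Out(A_{\mathrm{ss}})=1$ (cf \S \ref{secOutscinde}) et que $\Out^0$ est compatible aux produits, on a $\Out^0(A)=\Out^0(A_0)$ et $\Out^0(B)=\Out^0(B_0)$. Il s'agit alors de v\'erifier que les bimodules $L,L'$ donnant l'\'equivalence stable \`a la Morita peuvent \^etre ajust\'es (quitte \`a ajouter ou retirer des facteurs directs projectifs) de fa\c{c}on \`a induire une \'equivalence stable \`a la Morita entre $A_0$ et $B_0$ d'une part, et une \'equivalence de Morita entre $A_{\mathrm{ss}}$ et $B_{\mathrm{ss}}$ d'autre part~: les modules simples projectifs de $A$ sont envoy\'es par $L'\otimes_A -$ sur des sommes de modules simples projectifs de $B$ (modulo projectifs), ce qui fournit l'appariement des blocs semi-simples. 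On est alors ramen\'e au cas o\`u $A=A_0$ et $B=B_0$ n'ont pas de module simple projectif.

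Sous cette hypoth\`ese, le lemme \ref{lemmestable} fournit, pour toute vari\'et\'e $S$, un isomorphisme fonctoriel
\[
\Pic_A^0(S)/H^1(S,(ZA\otimes\CO_S)^\times)\iso
\Pic_B^0(S)/H^1(S,(ZB\otimes\CO_S)^\times).
\]
D'apr\`es le th\'eor\`eme \ref{repOut} (et sa sp\'ecialisation \`a la composante connexe, cf proposition \ref{Pic}), ces deux foncteurs sont respectivement repr\'esent\'es par les groupes alg\'ebriques $\Out^0(A)$ et $\Out^0(B)$. Le lemme de Yoneda fournit donc un isomorphisme de vari\'et\'es $\Out^0(A)\iso\Out^0(B)$, et la compatibilit\'e du lemme \ref{lemmestable} au produit dans $\Pic^0$ (\ie\ la composition des \'equivalences) montre que c'est un isomorphisme de groupes alg\'ebriques.

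Le point d\'elicat est donc l'\'etape de r\'eduction au cas sans module simple projectif~: une fois celle-ci effectu\'ee, la preuve se r\'eduit \`a une application formelle du lemme \ref{lemmestable} et du principe de Yoneda, tout comme le th\'eor\`eme \ref{invarderiv} d\'ecoule du lemme \ref{lemmederive} et du th\'eor\`eme \ref{repOut}.
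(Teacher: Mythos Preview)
Your proposal is correct and follows exactly the paper's approach: the paper's proof reads simply ``Il suffit de traiter le cas o\`u $A$ et $B$ n'ont pas de modules simples projectifs. Le th\'eor\`eme r\'esulte alors du lemme \ref{lemmestable} et du th\'eor\`eme \ref{repOut}.'' You have expanded the reduction step (splitting off the semisimple blocks, which carry trivial $\Out^0$) that the paper leaves implicit, and then invoked the same two ingredients.
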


\begin{proof}
Il suffit de traiter le cas o\`u $A$ et $B$ n'ont pas de modules simples
projectifs.
Le th\'eor\`eme r\'esulte alors du lemme \ref{lemmestable} et du th\'eor\`eme
\ref{repOut}.
\end{proof}

\begin{rem}
Soit $R$ une $k$-alg\`ebre locale noeth\'erienne commutative et $V$ un
$(B\otimes R)$-module, libre de type fini sur $R$, tel que
$V\otimes_R k$ n'a pas de facteur direct projectif. Alors, il
existe un $(A\otimes R)$-module $W$, libre de type fini sur $R$, tel que
$L\otimes_B V\simeq W\oplus P\otimes R$, o\`u $P$ est un $A$-module projectif
et $W\otimes_R k$ n'a pas de facteur direct projectif (lemme \ref{factprojglobal}).
En d'autres termes, si $V'$ est un $B$-module de type fini et 
$L\otimes_B V'\simeq W'\oplus\text{ projectif}$, alors une d\'eformation
de $V'$ s'envoie sur une d\'eformation de $W'$. En particulier, $V'$ est
rigide si et seulement si $W'$ est rigide.
\end{rem}

\subsection{Equivalences d\'eriv\'ees de vari\'et\'es projectives lisses}

\subsubsection{}
Soit $X$ une vari\'et\'e projective lisse sur $k$. On note
$p_1$ et $p_2$ les premi\`eres et deuxi\`emes projections $X\times X\to X$.

Soit $S$ un sch\'ema s\'epar\'e de type fini sur $k$.
On note
$\tilde{\CP}_X(S)$ le groupe des classes d'isomorphisme de
$\CO_{X\times X\times S}$-modules $M$ coh\'erents localement libres sur $S$
tels que $p_{1*}(M(s))$ et 
$p_{2*}(M(s))$ sont 
des fibr\'es en droite num\'eriquement nuls sur $X$, pour tout $s\in S$.
Soit $\CP_X(S)$ le quotient de $\tilde{\CP}_X(S)$ par $\Pic(S)$.

Soit $\CL\in\Pic^0(X\times S)$ et $\sigma:S\to\Aut(X)$. On note
$\Delta_\sigma:X\times S\to X\times X\times S$ le plongement du
graphe relatif de
$\sigma$, \ie, $\Delta_\sigma(x,s)=(x,\sigma(s)(x),s)$.
Alors, $\Delta_{\sigma *}\CL\in\tilde{\CP}_X(S)$ et on obtient ainsi
un morphisme canonique
$$\Pic^0(X\times S)\rtimes\Hom(S,\Aut(X))\to \tilde{\CP}_X(S).$$

\begin{prop}
\label{Picbimod}
On a un isomorphisme canonique de groupes
$$\Pic^0(X/S)\rtimes\Hom(S,\Aut(X))\iso \CP_X(S).$$
En d'autres termes, le foncteur $\CP_X(?)$ est repr\'esent\'e par
$\Pic^0(X)\rtimes \Aut(X)$.
\end{prop}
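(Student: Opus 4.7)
Le plan est de montrer que le morphisme canonique
$$\Pic^0(X\times S)\rtimes\Hom(S,\Aut(X))\to \tilde{\CP}_X(S),\quad (\CL,\sigma)\mapsto \Delta_{\sigma *}\CL$$
est bien d\'efini, passe au quotient par $\Pic(S)$, et induit l'isomorphisme annonc\'e $\Pic^0(X/S)\rtimes\Hom(S,\Aut(X))\iso\CP_X(S)$. D'abord, $\Delta_{\sigma *}\CL$ appartient bien \`a $\tilde{\CP}_X(S)$ car $\Delta_\sigma$ \'etant une immersion ferm\'ee au-dessus de $S$, on a $p_{1*}\Delta_{\sigma *}\CL\simeq\CL$ et $p_{2*}\Delta_{\sigma *}\CL$ est le transform\'e de $\CL$ par l'automorphisme de $X\times S$ associ\'e \`a $\sigma$, tous deux num\'eriquement nuls sur chaque fibre. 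La compatibilit\'e avec le produit semi-direct s'obtient en identifiant la loi de groupe sur $\tilde{\CP}_X$ \`a la convolution $M\ast N:=p_{13*}(p_{12}^*M\otimes p_{23}^*N)$ sur $X\times X\times X\times S$, gr\^ace \`a l'identit\'e $p_{13}\circ(\Delta_\sigma\times_{X\times S}\Delta_\tau)=\Delta_{\sigma\tau}$. L'injectivit\'e en d\'ecoule: $\Delta_{\sigma *}\CL$ d\'etermine son support sch\'ematique (donc $\sigma$) et son $p_{1*}$ (donc $\CL$ modulo $\Pic(S)$).

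Le point d\'elicat est la surjectivit\'e. Elle reposera sur l'\'enonc\'e structurel ponctuel suivant: pour $M\in\tilde{\CP}_X(S)$ et $s$ un point ferm\'e de $S$, il existe $\tau\in\Aut(X)$ et $\CM\in\Pic^0(X)$ uniques tels que $M(s)\simeq\Delta_{\tau *}\CM$. Pour l'obtenir, je me ram\`enerais, par torsion par $p_1^*(p_{1*}M(s))^{-1}$, au cas $p_{1*}M(s)\simeq\CO_X$. L'adjonction fournit alors un morphisme non nul $\CO_{X\times X}\to M(s)$ dont l'image est le faisceau structural d'un sous-sch\'ema ferm\'e $Z\subset X\times X$; l'hypoth\`ese sym\'etrique que $p_{2*}M(s)$ est \'egalement un fibr\'e en droites, alli\'ee \`a la lissit\'e et la connexit\'e de $X$, forcera les projections $p_i|_Z:Z\to X$ \`a \^etre des isomorphismes et $M(s)$ \`a co\"{\i}ncider avec $\CO_Z$ (nullit\'e du conoyau, absence de composantes immerg\'ees), faisant de $Z$ le graphe d'un automorphisme $\tau$.

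Pour globaliser, la platitude de $M$ sur $S$ fournira la famille relative des graphes $(\Gamma_{\tau_s})_{s\in S}$ comme sous-sch\'ema ferm\'e plat de $X\times X\times S$, donc un morphisme $S\to\Hilb(X\times X)$. Ce morphisme se factorisera par l'ouvert repr\'esentant $\Aut(X)$ (lieu des sous-sch\'emas se projetant isomorphiquement sur chaque facteur), d\'efinissant $\sigma:S\to\Aut(X)$. Je poserais alors $\CL:=p_{1*}M\in\Pic(X\times S)$, num\'eriquement nul fibre par fibre par hypoth\`ese, et l'isomorphisme $M\simeq\Delta_{\sigma *}\CL$ r\'esultera de l'adjonction appliqu\'ee globalement sur $X\times X\times S$. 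L'obstacle principal est l'\'etape ponctuelle: contr\^oler finement la structure sch\'ematique de $Z$ et exclure les composantes immerg\'ees exige un usage pr\'ecis des hypoth\`eses sur $p_{1*}M(s)$ et $p_{2*}M(s)$, non seulement comme classes dans $\Pic$ mais comme v\'eritables fibr\'es en droites.
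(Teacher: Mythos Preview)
Your plan matches the paper's strategy: normalize so that $p_{13*}M\simeq\CO_{X\times S}$, then identify $M$ with the structure sheaf of the graph of some $\sigma:S\to\Aut(X)$. The paper asserts this in a single line directly over $S$; your pointwise-then-globalize route through the Hilbert scheme of $X\times X$, with $\Aut(X)$ sitting inside as an open subscheme, is a reasonable and more explicit elaboration of the same idea.

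There is, however, a genuine gap at precisely the spot you flag as the ``obstacle principal'', and it is more serious than controlling embedded components. The conditions that $p_{1*}M(s)$ and $p_{2*}M(s)$ be line bundles do \emph{not} by themselves force $M(s)$ to be supported on a graph: for connected $X$ of positive dimension, take $M(s)=\CO_{X\times X}$; then $p_{i*}M(s)=\CO_X$, the adjunction map $\CO_{X\times X}\to M(s)$ is the identity, and your scheme $Z$ is all of $X\times X$. The missing input is the \emph{invertibility} of $M$ under convolution --- implicit when the paper calls $\tilde{\CP}_X(S)$ a ``groupe'', but never invoked in your sketch. With invertibility, $\Phi_{M(s)}$ becomes an equivalence of $D^b(X)$, and one can then argue (this is the content behind \cite[Prop.~9.45]{Huy}) that a kernel which is a sheaf and satisfies your pushforward conditions must be supported on the graph of an automorphism. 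Your support-theoretic sketch, based only on the section $\CO_{X\times X}\to M(s)$ and the two pushforward conditions, cannot exclude the spread-out case and will not close without bringing invertibility into the argument.
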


\begin{proof}
Soit $p_{13}:X\times X\times S\to X\times S$ la projection sur les
premi\`eres et troisi\`emes composantes.
Soit $M\in\tilde{\CP}_X(S)$ tel que
$p_{13*}M\simeq\CO_X\otimes\CO_S$. Alors, il existe un unique
$\sigma:S\to\Aut(X)$ tel que $M\simeq \CO_{\Delta_\sigma(X\times S)}$.
La proposition en d\'ecoule.
\end{proof}

\subsubsection{}
Soit $Y$ une vari\'et\'e projective lisse sur $k$ et
$L\in D^b(X\times Y)$. On pose
$\Phi_L=Rp_{1*}(L\otimes^\BL p_2^*(-)):D^b(Y)\to D^b(X)$ o\`u
$p_1:X\times Y\to X$ et $p_2:X\times Y\to Y$ sont les premi\`eres et
deuxi\`emes projections. Pour $L'\in D^b(Y\times X)$, on pose
$L\boxtimes_Y L'=p_{13*}(p_{12}^*L\otimes^\BL p_{23}^*L')$.

\medskip
Le r\'esultat suivant est pr\'esent\'e dans \cite[Proposition 9.45]{Huy} et
annonc\'e dans \cite[\S 3.2.1]{Rou2}.

\begin{thm}
\label{PicAut}
Une \'equivalence $D^b(Y)\iso D^b(X)$ induit un isomorphisme de groupes
alg\'ebriques
$\Pic^0(Y)\rtimes\Aut^0(Y)\iso \Pic^0(X)\rtimes\Aut^0(X)$.
\end{thm}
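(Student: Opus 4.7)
The plan is to transpose the moduli-theoretic argument used in Theorem~\ref{invarderiv} to the projective setting, with Proposition~\ref{Picbimod} playing the role of Theorem~\ref{repOut}. By Orlov's representability theorem, the given equivalence is isomorphic to a Fourier-Mukai transform $\Phi_L$ for some $L\in D^b(X\times Y)$, with quasi-inverse $\Phi_{L'}$ for some $L'\in D^b(Y\times X)$ satisfying $L\boxtimes_Y L'\simeq\CO_{\Delta_X}$ and $L'\boxtimes_X L\simeq\CO_{\Delta_Y}$. For any scheme $S$, the operations $\Psi_L\colon M\mapsto L\boxtimes_Y M\boxtimes_Y L'$ and $\Psi_{L'}\colon N\mapsto L'\boxtimes_X N\boxtimes_X L$ provide mutually inverse triangulated equivalences between $D^b(Y\times Y\times S)$ and $D^b(X\times X\times S)$, commuting with base change in $S$; these are the geometric analogues of the functors used in \S\ref{secequivderiv}.

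I would then take $S=\Pic^0(Y)\rtimes\Aut^0(Y)$ and let $M_Y\in \tilde{\CP}_Y(S)$ be the universal family supplied by Proposition~\ref{Picbimod}, so $M_Y(1)\simeq\CO_{\Delta_Y}$. Form $M_X=\Psi_L(M_Y)\in D^b(X\times X\times S)$; then $M_X(1)\simeq\CO_{\Delta_X}$ by the equivalence property. Lemma~\ref{semicoho} applied to $M_X$ and to the perfect complexes $Rp_{i*}M_X$ for $i=1,2$ yields an open neighborhood $U$ of $1$ on which $M_X$ is a coherent sheaf, $S$-flat, and on which $p_{1*}M_X(s)$ and $p_{2*}M_X(s)$ are line bundles for every closed $s\in U$. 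Numerical triviality of these line bundles is an open-and-closed condition on the connected variety $U$ and holds at $s=1$, so it holds throughout $U$. This exhibits $M_X|_U$ as an element of $\tilde{\CP}_X(U)$ and, via Proposition~\ref{Picbimod}, produces a morphism $f_U\colon U\to \Pic^0(X)\rtimes\Aut^0(X)$.

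To globalize $f_U$, I would use that $M_Y$ is multiplicative with respect to the group law $m\colon S\times S\to S$: the kernel $m^*M_Y$ is isomorphic, modulo line bundles pulled back from $S\times S$, to the convolution $p_{13}^*M_Y\boxtimes_Y p_{23}^*M_Y$, since both represent the same $S\times S$-parametrized family of autoequivalences of $D^b(Y)$. Because $\Psi_L$ is compatible with convolution of kernels, $M_X$ inherits the same multiplicativity. Consequently the locus of $s\in S$ at which $M_X(s)$ is the structure sheaf of the graph of an automorphism twisted by a numerically trivial line bundle is closed under the group law, open, and contains $1$, hence equals the connected group $S$. This defines a morphism $f\colon S\to \Pic^0(X)\rtimes\Aut^0(X)$ which the multiplicativity forces to be a group scheme homomorphism, and the symmetric construction with $\Psi_{L'}$ provides a two-sided inverse. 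The main difficulty is precisely the semicontinuity and extension step: controlling simultaneously the cohomological concentration of $M_X$ and the line bundle nature of its two pushforwards, and then propagating these properties globally via the multiplicative structure, all modulo the ambiguity by $\Pic(S)$ that is already built into the definition of $\CP_X$.
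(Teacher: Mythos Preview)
Your approach is essentially the paper's: Orlov's kernel, the universal family from Proposition~\ref{Picbimod}, semicontinuity at the identity to land in $\CP_X$ on a neighborhood, then propagation via the group law exactly as in Lemma~\ref{lemmederive}. The paper is terser---it simply writes ``on conclut comme dans la preuve du lemme~\ref{lemmederive}''---whereas you spell out the multiplicativity step explicitly; the only quibble is that Lemma~\ref{semicoho} as stated is for perfect complexes of $\CO_S$-modules, so invoking it for $M_X\in D^b(X\times X\times S)$ tacitly uses the obvious relative version, but the paper makes the same silent leap.
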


\begin{proof}
Il existe $L\in D^b(X\times Y)$ tel que
l'\'equivalence est isomorphe \`a $\Phi_L$ \cite{Or1}.
Soit $L'=R\Hom(L,\CO_{X\times Y})\in D^b(Y\times X)$.
Alors, $L\boxtimes_Y L'\simeq \CO_{\Delta(X)}$, o\`u $\Delta(X)$ est la
diagonale dans $X\times X$. Soit $S=\Pic^0(Y)\rtimes\Aut^0(Y)$ et soit
$N\in \CP_Y(S)$ correspondant \`a l'identit\'e via la proposition
\ref{Picbimod}. Soit $N=L\boxtimes_Y M\boxtimes_Y L'\in D^b(X\times X\times S)$.
Alors, la fibre \`a l'identit\'e de $N$ est isomorphe \`a
$\CO_{\Delta(X)}$. Par cons\'equent, il existe un voisinage ouvert
$\Omega$ de l'identit\'e dans $S$ tel que $N_{|\Omega}$ a ses faisceaux
de cohomologie nuls en dehors du degr\'e $0$. Quitte \`a r\'etr\'ecir
$\Omega$, on a $N_{|\Omega}\in\CP_X(\Omega)$ et on conclut comme dans la
preuve du lemme \ref{lemmederive}, via la proposition \ref{Picbimod}.
\end{proof}

\begin{rem}
Dans le cas o\`u $X$ et $Y$ sont des vari\'et\'es ab\'eliennes, on retrouve
un r\'esultat d'Orlov \cite{Or2}:
une \'equivalence $D^b(Y)\simeq D^b(X)$
induit un isomorphisme $Y\times \hat{Y}\iso X\times\hat{X}$.
\end{rem}

\begin{rem}
Le lecteur int\'eress\'e pourra formuler une g\'en\'eralisation des
th\'eor\`emes \ref{invarderiv} et \ref{PicAut} au cas d'une
$\CO_X$-alg\`ebre finie, o\`u $X$ est une vari\'et\'e projective lisse.
\end{rem}

\newpage
\section{Alg\`ebres gradu\'ees}

\subsection{G\'en\'eralit\'es}
\subsubsection{Dictionnaire}
\label{dictionnaire}
Soit $A$ une $k$-alg\`ebre de dimension finie.
Une graduation sur $A$ correspond \`a la donn\'ee d'une action alg\'ebrique
de $\BG_m$ sur $A$, {\em i.e.}, \`a un morphisme de groupe alg\'ebriques
$\pi:\BG_m\to\Aut(A)$.
Pour $A=\bigoplus_{i\in\BZ} A_i$, alors $x\in k^*$ agit sur $A_i$
par multiplication par $x^i$.

Cela correspond \`a son tour \`a la donn\'ee
d'un morphisme d'alg\`ebres $\rho:A\to A[t,t^{-1}]$.
On a $\rho(a)=at^i$ pour $a\in A_i$.
Ce morphisme
v\'erifie les propri\'et\'es
suivantes~:
\begin{itemize}
\item (inversibilit\'e) Le morphisme $\rho\otimes 1:A[t,t^{-1}]\to A[t,t^{-1}]$
est un isomorphisme, c'est-\`a-dire, la composition
$\rho_x:A\Rarr{\rho} A[t,t^{-1}]\to A[t,t^{-1}]/(t-x)=A$ est un isomorphisme
pour tout $x\in k^\times$~;
\item (multiplicativit\'e) Le diagramme suivant est commutatif
$$\xymatrix{
A \ar[rr]^\rho \ar[d]_\rho && A\otimes k[t,t^{-1}]\ar[d]^{1\otimes \mu} \\
A\otimes k[t,t^{-1}]\ar[rr]_{\rho\otimes 1} &&
  A\otimes k[t,t^{-1}]\otimes k[t,t^{-1}]
}$$
o\`u $\mu:k[t,t^{-1}]\to k[t,t^{-1}]\otimes k[t,t^{-1}],\
t\mapsto t\otimes t$ est la comultiplication.
\end{itemize}

On obtient alors un $(A\otimes A^\circ)[t,t^{-1}]$-module $X$ \`a partir
d'une graduation.
On a $X=A[t,t^{-1}]$ comme $A[t,t^{-1}]$-module et $A$ agit
\`a droite par multiplication pr\'ec\'ed\'ee de $\rho$~:
pour $x\in X$ et $a\in A_i$, alors $x\cdot a=t^i xa$.

\subsubsection{Rel\`evement d'actions de tores}
Soit $A$ une $k$-alg\`ebre de dimension finie et
$G$ un tore sur $k$ agissant sur $A$.

La proposition suivante est classique.
\begin{prop}
\label{graduable}
Soit $M$ un $A$-module de type fini
tel que $A_g\otimes_A M\simeq M$ pour tout
$g\in G$. Alors, $M$ s'\'etend en un $(A\rtimes G)$-module.
Si $M$ est ind\'ecomposable, alors cette extension est unique \`a
multiplication par un caract\`ere de $G$ pr\`es et \`a isomorphisme
pr\`es.
\end{prop}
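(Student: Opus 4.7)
Ma proposition de d\'emonstration consiste \`a interpr\'eter le probl\`eme comme celui de l'existence et de l'unicit\'e d'une section d'une extension de groupes alg\'ebriques.

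D'abord, je d\'efinirais un groupe alg\'ebrique $\tG$ sur $k$ param\'etrant les paires $(g,\phi)$ o\`u $g\in G$ et $\phi:A_g\otimes_A M\iso M$ est un isomorphisme de $A$-modules, muni de la multiplication $(g,\phi)(h,\psi)=(gh,\phi\circ(A_g\otimes_A\psi))$ (en utilisant $A_g\otimes_A A_h=A_{gh}$). La projection $p:\tG\to G$ est alors un morphisme de groupes alg\'ebriques de noyau $H:=\Aut_A(M)=\End_A(M)^\times$. L'hypoth\`ese assure la surjectivit\'e de $p$ aux points ferm\'es, et puisque $\CA\otimes_A M$ est localement libre de m\^eme rang que $M\otimes\CO_G$ comme $\CO_G$-module (o\`u $\CA$ est le $(A^\en\otimes\CO_G)$-module associ\'e \`a l'action de $G$ sur $A$ comme en \S \ref{familles}), un argument de Nakayama montre que $p$ est localement scind\'e, donc un $H$-torseur. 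Toute section alg\'ebrique $s:G\to\tG$ de $p$ fournit, via le dictionnaire de \S \ref{dictionnaire}, une action alg\'ebrique de $G$ sur $M$ \'etendant la structure de $A$-module en une structure de $(A\rtimes G)$-module.

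Pour construire une telle section, j'utiliserais que $H=\End_A(M)^\times$, en tant que groupe des inversibles d'une alg\`ebre de dimension finie, est un groupe alg\'ebrique connexe; donc $\tG$ est aussi connexe. Soit $\tT$ un tore maximal de $\tG$. Comme $H$ est un sous-groupe ferm\'e distingu\'e connexe de $\tG$, l'intersection $\tT\cap H$ est un tore maximal de $H$ (fait classique de la th\'eorie des groupes alg\'ebriques lin\'eaires), et $p$ induit une surjection de tores $\tT\to G$ de noyau $\tT\cap H$. Toute suite exacte courte de tores est scind\'ee (par dualit\'e des caract\`eres, c'est une suite exacte courte de groupes ab\'eliens libres), ce qui fournit un sous-tore $T'\subseteq\tT$ envoy\'e isomorphiquement sur $G$ par $p$; la compos\'ee $G\iso T'\hookrightarrow\tG$ est la section cherch\'ee.

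Pour l'unicit\'e \`a caract\`ere pr\`es, soit $M$ ind\'ecomposable et soient $s_1,s_2:G\to\tG$ deux sections. Alors $f(g):=s_1(g)s_2(g)^{-1}\in H$ est un $1$-cocycle pour l'action de $G$ sur $H$ par conjugaison via $s_2$, et les classes d'isomorphisme de $(A\rtimes G)$-modules associ\'ees \`a $s_1$ et $s_2$ sont isomorphes si et seulement si les cocycles sont cohomologues. Les classes d'extensions sont donc en bijection avec $H^1(G,H)$. Puisque $\End_A(M)$ est locale, on a $H=k^\times\times(1+J)$ o\`u $J=\rad\End_A(M)$; le facteur $k^\times$ est central dans $H$ et l'action de $G$ y est triviale (tout automorphisme d'alg\`ebre locale agit trivialement sur le corps r\'esiduel). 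Le cocycle $f$ se d\'ecompose donc en $(\chi,f_2)$ o\`u $\chi:G\to k^\times$ est un vrai caract\`ere de $G$ (correspondant au twist par ce caract\`ere) et $f_2:G\to 1+J$ est un cocycle \`a valeurs dans un groupe unipotent muni d'une filtration $G$-stable $1+J\supset 1+J^2\supset\cdots$ \`a quotients isomorphes \`a des $\BG_a^r$; l'annulation $H^1(G,\BG_a^r)=0$ pour $G$ un tore montre par r\'ecurrence que $f_2$ est un cobord, donc donne un isomorphisme de $(A\rtimes G)$-modules.

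L'obstacle principal r\'eside dans la v\'erification que $\tT\cap H$ est un tore maximal de $H$, et dans la mise en place soigneuse de l'action de $G$ sur $H=k^\times\times(1+J)$ qui permet de conclure \`a la trivialit\'e cohomologique de la partie unipotente et d'extraire le caract\`ere comme seul invariant.
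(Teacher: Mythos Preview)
Your proof is correct and follows the same overall strategy as the paper: both construct the extension $1\to \End_A(M)^\times\to\tilde{G}\to G\to 1$ (the paper realises $\tilde{G}$ as a subgroup of $G\times\End_k(M)^\times$, which is canonically your group of pairs $(g,\phi)$) and produce a splitting using that $G$ is a torus, then treat uniqueness via the decomposition $\End_A(M)^\times\simeq k^\times\times(1+J)$ for $M$ indecomposable.

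The only real difference lies in how the splitting is obtained. The paper first quotients by the unipotent radical $U=1+J(\End_A(M))$ to get an extension $1\to L\to\hat{G}\to G\to 1$ with $L$ a product of general linear groups, splits that (since $G$ is a torus), and then lifts the section through the unipotent $U$. You instead choose a maximal torus $\tilde{T}\subset\tilde{G}$ directly and split the resulting short exact sequence of tori $1\to\tilde{T}\cap H\to\tilde{T}\to G\to 1$. Your route is slightly more direct but relies on the standard fact that for a connected closed normal subgroup $H$ of a connected linear group, a maximal torus of the ambient group meets $H$ in a maximal torus and surjects onto the quotient; the paper's two-step argument trades this for the equally standard splitting of torus-by-unipotent and torus-by-reductive extensions. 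The uniqueness arguments (character ambiguity from $k^\times$, triviality of $H^1$ of a torus in a unipotent group via the radical filtration) are essentially identical in both proofs.
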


\begin{proof}
Soit $A'$ l'image de $A$ dans $\End_k(M)$.
Soit $\tilde{G}$ le sous-groupe de $G\times \End_k(M)^\times$ form\'e des
paires d'\'el\'ements qui ont la m\^eme action sur $A'$ (\ie, les
$(g,f)$ tels que $\rho(g(a))=f\rho(a)f^{-1}$ pour tout $a\in A$, o\`u
$\rho:A\to \End_k(M)$ est le morphisme structurel).

Une extension de $M$ en un $(A\rtimes G)$-module correspond en la donn\'ee
d'une scission de la premi\`ere projection $\tilde{G}\to G$ (qui est surjective
par hypoth\`ese)~: l'action de
$G$ est alors donn\'ee en composant avec la seconde projection
$\tilde{G}\to\End_k(M)^\times$.

On a un diagramme commutatif dont les lignes et colonnes sont exactes
$$\xymatrix{
        &          & 1\ar[d]                & 1\ar[d] \\
1\ar[r] & U \ar[r]\ar@{=}[d] & \End_A(M)^\times\ar[r]\ar[d] & L\ar[r]\ar[d] & 1 \\
1\ar[r] & U \ar[r] & \tilde{G}\ar[r]\ar[d] & \hat{G}\ar[r]\ar[d] & 1 \\
        &          & G \ar@{=}[r]\ar[d] & G\ar[d] \\
        &          & 1        & 1 \\
}$$
o\`u $U=1+J(\End_A(M))$ est le radical unipotent de $\End_A(M)^\times$
et $L$ un produit de groupes lin\'eaires.

L'extension de $G$ par $L$ est scind\'ee, car $G$ est un tore. Fixons une
scission.
L'extension correspondante
du tore $G$ par le groupe unipotent $U$ est scind\'ee.
Par cons\'equent, l'extension de $G$ par $\End_A(M)^\times$ est
scind\'ee, donc $M$ s'\'etend en un $(A\rtimes G)$-module.

Passons \`a l'unicit\'e. Pour $M$ ind\'ecomposable, on a
$L\simeq\BG_m$. La scission de l'extension de $G$ par $U$ est
unique \`a conjugaison par $U$ pr\`es,
donc la scission de l'extension de $G$ par $\End_A(M)^\times$
est unique \`a multiplication par un morphisme de $G$ dans
$k^\times\cdot 1_M$ pr\`es et \`a conjugaison par $U$ pr\`es.
\end{proof}

Notons au passage que si $M$ est un $(A\rtimes G)$-module ind\'ecomposable,
alors $M$ reste ind\'ecomposable comme $A$-module.

\subsection{Puissances cycliques de l'espace cotangent}

\subsubsection{}
Soit $A$ une $k$-alg\`ebre de dimension
finie. Soit $T=(JA/J^2A)^*$ l'espace tangent.
C'est un $(A/JA,A/JA)$-bimodule.

Soit $\CS$ un ensemble de repr\'esentants des classes d'isomorphisme
de $A$-modules simples.
On a un isomorphisme canonique de $(A/JA,A/JA)$-bimodules
$$f_0:A/JA\iso \bigoplus_{V\in\CS} \Hom_k(V,V)$$
$$a\mapsto \sum_V (v\in V\mapsto av).$$

Soit $W$ un $A$-module simple.
Le morphisme canonique
$$\Hom_A(JA,W)\iso \Hom_A(T^*,W)$$
est un isomorphisme.
La suite exacte $0\to JA\to A\to A/JA\to 0$ fournit un isomorphisme
de $A$-modules
$$\Hom_A(JA,W)\iso \Ext^1_A(A/JA,W).$$
On a un isomorphisme de $A$-modules induit par $f_0$
$$\Ext^1_A(A/JA,W)\iso
 \bigoplus_{V\in\CS}\Ext^1_A(V,W)\otimes V.$$
Enfin, on a un isomorphisme canonique de $A^{\en}$-modules
$$T^*\iso \bigoplus_{W\in\CS} \Hom_k(\Hom_A(T^*,W),W).$$

Il ne nous reste plus qu'\`a composer tous ces isomorphismes et \`a dualiser~:

\begin{lemme}
\label{descriptionT}
On a un isomorphisme canonique de $A^\en$-modules
$$T\iso \bigoplus_{V,W\in\CS}\Hom_k(V,W)\otimes \Ext^1_A(W,V).$$
\end{lemme}

\subsubsection{}
On note $T^{\odot_A^n}=T\otimes_A \cdots\otimes_A T\otimes_A$ le produit
tensoriel cyclique $n$-\`eme de $T$ au-dessus de $A$.
L'action naturelle de $\Aut(A)$ sur $T$ fournit une action diagonale sur
$T^{\odot_A^n}$.

Le lemme suivant est imm\'ediat.

\begin{lemme}
L'action de $\Aut(A)$ sur $T^{\odot_A^n}$
se factorise en une action de $\Out(A)$.
\end{lemme}

L'isomorphisme canonique du lemme \ref{descriptionT} fournit~:

\begin{prop}
\label{tenseurcyclique}
On a un isomorphisme canonique
$$T^{\odot_A^n}\iso\bigoplus_{V_1,\ldots,V_n\in\CS}
 \Ext^1_A(V_1,V_2)\otimes\cdots\otimes\Ext^1_A(V_{n-1},V_n)\otimes
\Ext^1_A(V_n,V_1).$$
\end{prop}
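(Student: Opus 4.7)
The plan is to compute $T^{\odot^n_A}$ directly by exploiting the fact that $T$, and hence each iterated tensor power $T^{\otimes_A m}$, is a bimodule on which both $A$-actions factor through the semisimple quotient $A/JA \cong \bigoplus_{V \in \CS} \End_k(V)$. Indeed $T = (JA/J^2A)^*$ is annihilated on either side by $JA$, so all the tensor products $\otimes_A$ may be replaced by $\otimes_{A/JA}$ and decomposed along the simple factors.

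First I would substitute Lemma \ref{descriptionT},
$$T \iso \bigoplus_{V, W \in \CS} \Hom_k(V, W) \otimes \Ext^1_A(W, V),$$
where the bimodule structure is concentrated on $\Hom_k(V, W) = W \otimes V^*$: the left $A$-action goes through $W$, the right $A$-action through $V$, while the $\Ext^1$ factor is a plain $k$-vector space and plays no role in the $A$-actions. Next I would iterate, using the elementary identification
$$(W_1 \otimes V_1^*) \otimes_{A/JA} (W_2 \otimes V_2^*) \iso W_1 \otimes V_2^*$$
when $V_1 = W_2$ (and $0$ otherwise), which comes from the canonical pairing $V_1^* \otimes_{\End_k(V_1)} V_1 \iso k$. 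A straightforward induction then yields
$$T^{\otimes_A n} \iso \bigoplus_{W_0, \ldots, W_n \in \CS} \Hom_k(W_n, W_0) \otimes \bigotimes_{i=0}^{n-1} \Ext^1_A(W_i, W_{i+1}),$$
as an $(A/JA)^{\en}$-module whose remaining left action acts on $W_0$ and right action on $W_n$.

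Finally, the cyclic closure amounts to taking coinvariants, i.e.\ tensoring $T^{\otimes_A n}$ over $A^{\en}$ with the diagonal bimodule $A$. On each summand this reduces to $\Hom_k(W_n, W_0) \otimes_{(A/JA)^{\en}} (A/JA)$, which by the trace map vanishes unless $W_n = W_0$, in which case it is canonically isomorphic to $k$. Setting $V_i = W_{i-1}$ (so that the cyclic constraint $W_n = W_0$ becomes $V_{n+1} = V_1$), the surviving $\Ext^1$ factors assemble into $\Ext^1_A(V_1, V_2) \otimes \cdots \otimes \Ext^1_A(V_n, V_1)$, which is the claimed formula.

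The computation is essentially linear algebra over the semisimple ring $A/JA$ and presents no serious obstacle; the one point requiring care is to identify $\Hom_k(V, V) \otimes_{(A/JA)^{\en}} (A/JA)$ with $k$ via the intrinsic trace rather than via a choice of basis, so that the resulting isomorphism is genuinely canonical and, in particular, compatible with the $\Out(A)$-action guaranteed by the preceding lemma.
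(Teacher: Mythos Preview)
Your argument is correct and is exactly what the paper has in mind: the paper's proof consists of the single sentence ``L'isomorphisme canonique du lemme \ref{descriptionT} fournit\ldots'', and you have merely spelled out the linear-algebra computation over $A/JA$ that this sentence leaves to the reader. Your care about the trace identification is appropriate, since the paper emphasises that the resulting isomorphism is canonical and $\Out(A)$-equivariant.
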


\subsection{Changement de graduation}
\label{se:chgtgrad}

\begin{lemme}
Tout morphisme $\BG_m\to\Out(A)$ se rel\`eve en un morphisme
$\BG_m\to\Aut(A)$.
\end{lemme}

\begin{proof}
L'image d'un tore maximal $T$ de $\Aut^0(A)$ par le morphisme canonique
$\phi:\Aut^0(A)\to\Out^0(A)$
est un tore maximal de $\Out^0(A)$. Puisque le noyau
$\Int(A)$ de $\phi$ est connexe, la restriction \`a $T$ de $\phi$
est scind\'ee. Le lemme d\'ecoule maintenant du fait que tout morphisme
$\BG_m\to\Out(A)$ est conjugu\'e \`a un morphisme d'image contenue dans
$\phi(T)$.
\end{proof}

De m\^eme, on a

\begin{lemme}
Tout morphisme $\BG_m\to \Int(A)$ se rel\`eve en un morphisme $\BG_m\to
A^\times$.
\end{lemme}

\begin{prop}
\label{conjugaison}
Deux morphismes $\pi,\pi':\BG_m\to\Aut(A)$ sont conjugu\'es
si et seulement si les alg\`ebres gradu\'ees $(A,\pi)$ et $(A,\pi')$
sont isomorphes.
\end{prop}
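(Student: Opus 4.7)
The plan is to show that the statement is essentially a direct translation via the dictionary of \S\ref{dictionnaire}, which identifies gradings on $A$ with algebraic $\BG_m$-actions. The key observation is that an isomorphism of graded algebras $(A,\pi)\iso(A,\pi')$ is, by definition, an algebra automorphism $\phi$ of $A$ sending the decomposition $A=\bigoplus_i A_i$ associated to $\pi$ onto the decomposition $A=\bigoplus_i A'_i$ associated to $\pi'$, and this is equivalent to the $\BG_m$-equivariance condition $\phi\circ\pi(t)=\pi'(t)\circ\phi$ for all $t\in\BG_m$.

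First I would make this translation explicit: if $\phi:(A,\pi)\iso (A,\pi')$ sends $A_i$ to $A'_i$, then for $a\in A_i$ one has $\pi'(t)(\phi(a))=t^i\phi(a)=\phi(t^i a)=\phi(\pi(t)(a))$, so $\pi'(t)=\phi\,\pi(t)\,\phi^{-1}$ as elements of $\Aut(A)$, i.e. $\pi'=\ad(\phi)\circ\pi$ and the two morphisms are conjugate. Conversely, if $\pi'=\phi\,\pi\,\phi^{-1}$ in $\Aut(A)$, then $\phi$ intertwines the two $\BG_m$-actions, hence carries the $\pi$-weight space of weight $i$ to the $\pi'$-weight space of weight $i$, which is exactly the assertion that $\phi$ is an isomorphism of graded algebras.

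There is no real obstacle here: both directions follow from unwinding the definitions of the dictionary of \S\ref{dictionnaire} and the fact that the weight space decomposition of $A$ under $\pi$ is precisely the grading, with $\pi(t)$ acting on $A_i$ by multiplication by $t^i$. The only point worth stating explicitly is that a graded algebra isomorphism $(A,\pi)\to(A,\pi')$ is required to be an algebra automorphism of the underlying algebra $A$ (not of two distinct algebras), which is exactly what allows conjugation inside the single group $\Aut(A)$ to make sense.
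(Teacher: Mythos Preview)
Your proof is correct and follows exactly the same approach as the paper: both directions amount to observing that an algebra automorphism $\phi\in\Aut(A)$ satisfies $\pi'=\phi\,\pi\,\phi^{-1}$ if and only if it intertwines the two $\BG_m$-actions, i.e.\ is a graded isomorphism $(A,\pi)\iso(A,\pi')$. The paper's own proof is in fact even terser than yours, recording only this equivalence without unpacking the weight-space computation.
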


\begin{proof}
Soit $\alpha\in\Aut(A)$ tel que $\pi'=\alpha \pi \alpha^{-1}$. Alors,
l'automorphisme $\alpha$ de $A$ induit un isomorphisme
de $(A,\pi)$ vers $(A,\pi')$.

R\'eciproquement, soit $\alpha\in\Aut(A)$ induisant un isomorphisme
de $(A,\pi)$ vers $(A,\pi')$. Alors, $\pi'=\alpha \pi \alpha^{-1}$.
\end{proof}

\begin{prop}
Supposons que l'alg\`ebre $A$ est basique.
Soient $\pi,\pi':\BG_m\to\Aut(A)$. Les assertions suivantes sont \'equivalentes~:
\begin{itemize}
\item[(i)]
$\pi$ et $\pi'$ induisent des morphismes conjugu\'es $\BG_m\to\Out(A)$
\item[(ii)]
il existe une famille d'entiers $\{d_i(V)\}$ o\`u $V$ d\'ecrit
les $A$-modules simples et $1\le i\le \dim V$ tels que
l'alg\`ebre gradu\'ee $(A,\pi')$ est isomorphe \`a 
$\Endgr_{(A,\pi)}(\bigoplus_{V,i}P_V\langle d_i(V)\rangle)$.
\end{itemize}
\end{prop}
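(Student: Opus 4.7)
\emph{(ii) $\Rightarrow$ (i).}
Comme $\BG_m$ est connexe, $\pi$ agit trivialement sur $A/J(A)$, et l'on choisit des idempotents primitifs orthogonaux $e_V\in A$ homog\`enes de degr\'e $0$ pour $\pi$, avec $P_V=Ae_V$. L'\'el\'ement $u(\lambda):=\sum_V\lambda^{d(V)}e_V$ d\'efinit alors un morphisme alg\'ebrique $\BG_m\to A^\times$. Puisque $A$ est basique, $\bigoplus_V P_V\langle d(V)\rangle\simeq A$ comme $A$-module non gradu\'e, donc $B:=\Endgr_{(A,\pi)}(\bigoplus_V P_V\langle d(V)\rangle)$ est isomorphe \`a $A$ comme alg\`ebre non gradu\'ee. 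Un calcul explicite du degr\'e d'un morphisme $P_V\langle d(V)\rangle\to P_W\langle d(W)\rangle$ via le dictionnaire de \S\ref{dictionnaire} montre que la graduation h\'erit\'ee sur $B\simeq A$ correspond \`a l'action $\lambda\mapsto\Ad(u(\lambda))\circ\pi(\lambda)$~; comme $\Ad(u(\lambda))\in\Int(A)$, elle a m\^eme image dans $\Out(A)$ que $\pi$. Par la proposition \ref{conjugaison}, l'isomorphisme $(A,\pi')\simeq B$ fournit alors un \'el\'ement de $\Aut(A)$ conjuguant $\pi'$ \`a $\Ad(u(\cdot))\pi(\cdot)$, d'o\`u $\bar{\pi'}$ conjugu\'e \`a $\bar\pi$ dans $\Out(A)$.

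\emph{(i) $\Rightarrow$ (ii).}
Quitte \`a remplacer $\pi'$ par un conjugu\'e dans $\Aut(A)$ (proposition \ref{conjugaison}, ce qui fournit une alg\`ebre gradu\'ee isomorphe), on suppose que $\bar\pi=\bar{\pi'}$ comme morphismes $\BG_m\to\Out(A)$. Soit $\phi:\Aut(A)\to\Out(A)$ la projection canonique et $H=\phi^{-1}(\bar\pi(\BG_m))\subset\Aut(A)$~: c'est une extension alg\'ebrique du tore $\bar\pi(\BG_m)$ par $\Int(A)$, et $\pi,\pi'$ sont deux scissions de $H\to\bar\pi(\BG_m)$.

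Tout tore maximal $T$ de $H$ rencontre $\Int(A)$ en un tore maximal. Puisque $A$ est basique, tout tore maximal de $A^\times$ est conjugu\'e au tore diagonal $T_0:=\{\sum_V c_Ve_V\mid c_V\in k^\times\}$, et les tores maximaux de $\Int(A)=A^\times/Z(A)^\times$ sont les images des tores maximaux de $A^\times$. On choisit (quitte \`a conjuguer $\pi'$ par un \'el\'ement de $H$, ce qui pr\'eserve l'\'egalit\'e $\bar{\pi'}=\bar\pi$ car $\bar\pi(\BG_m)$ est commutatif) un tore maximal $T$ de $H$ contenant \`a la fois $\pi(\BG_m)$ et $\pi'(\BG_m)$ et tel que $T\cap\Int(A)$ soit l'image de $T_0$. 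Alors $s(\lambda):=\pi'(\lambda)\pi(\lambda)^{-1}$ est un morphisme alg\'ebrique $\BG_m\to T\cap\Int(A)$, qui se rel\`eve, par surjectivit\'e de la fl\`eche entre cocaract\`eres de tores $X_*(T_0)\twoheadrightarrow X_*(T\cap\Int(A))$, en un morphisme $u:\BG_m\to T_0$~; un tel $u$ s'\'ecrit n\'ecessairement $u(\lambda)=\sum_V\lambda^{d(V)}e_V$ pour certains entiers $d(V)$. On a alors $\pi'(\lambda)=\Ad(u(\lambda))\pi(\lambda)$, et le calcul de (ii)$\Rightarrow$(i), lu \`a l'envers, identifie $(A,\pi')$ \`a $\Endgr_{(A,\pi)}(\bigoplus_V P_V\langle d(V)\rangle)$.

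Le point d\'elicat est la construction simultan\'ee du tore maximal $T$ de $H$ contenant les images de $\pi$ et $\pi'$ et d'intersection pr\'ecise avec $\Int(A)$~: c'est l\`a qu'intervient la conjugaison standard des tores maximaux dans $H$ et dans $\Int(A)$, jointe \`a la description explicite, propre au cas basique, des tores maximaux de $A^\times$ via les idempotents primitifs.
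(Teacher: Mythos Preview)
Your argument is correct and follows essentially the same strategy as the paper: reduce to $\bar\pi=\bar{\pi'}$, realise $\pi'\pi^{-1}$ as a cocharacter of a maximal torus of $\Int(A)$, lift it to the diagonal torus of $A^\times$ via primitive idempotents, and read off the integers $d(V)$. The paper differs only in organisation: it conjugates \emph{both} $\pi$ and $\pi'$ into a single maximal torus $T$ of $\Aut(A)$ chosen in advance to contain the image $T''$ of the diagonal torus $T'=S'^\times\subset A^\times$, whereas you fix $\pi$ and work inside the subgroup $H=\phi^{-1}(\bar\pi(\BG_m))$, conjugating only $\pi'$. One small point worth making explicit in your write-up: when you demand that the maximal torus $T$ of $H$ satisfy simultaneously $\pi(\BG_m)\subset T$ and $T\cap\Int(A)=\mathrm{image}(T_0)$, you cannot impose the second condition for a \emph{pre-chosen} system of idempotents $e_V$; rather, once $T$ is chosen to contain $\pi(\BG_m)$, the maximal torus $T\cap\Int(A)$ of $\Int(A)$ is the image of \emph{some} maximal torus of $A^\times$, and you should take the $e_V$ (and hence $T_0$) accordingly. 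Since $P_V\simeq Ae_V$ is well-defined up to isomorphism independently of that choice, this does not affect the conclusion.
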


\begin{proof}
Soit $S$ une sous-alg\`ebre semi-simple maximale de $A$, $S'$ une
sous-alg\`ebre de Cartan de $S$ (un produit maximal de corps dont chacune des
unit\'es est un idempotent primitif de $S$) et $T'={S'}^\times$ (c'est
un tore maximal de $S^\times$). Alors, $T'$ est un tore maximal de $A^\times$.
Soit $T''$ son
image dans $\Int(A)$. Soit $T$ un tore maximal de $\Aut(A)$
contenant $T''$.

Quitte \`a conjuguer $\pi$ et $\pi'$ par des \'el\'ements
de $\Aut(A)$, ce qui ne change pas la classe d'isomorphisme des alg\`ebres
gradu\'ees (proposition \ref{conjugaison}), on peut supposer que $\pi$ et $\pi'$
sont \`a valeur dans $T$.
Supposons que $\pi$ et $\pi'$ deviennent conjugu\'es dans $\Out(A)$. On
peut supposer, quitte \`a conjuguer $\pi$, que $\pi$ et $\pi'$
co\"{\i}ncident dans $\Out(A)$ et alors, $\pi'\pi^{-1}$ est un
cocaract\`ere $\psi$ de $T''$, que l'on peut relever en $\phi:\BG_m\to T'$.

\smallskip
On a $S=\prod_V e_VS$ o\`u $V$ d\'ecrit l'ensemble des classes
d'isomorphisme de $A$-modules simples
et $e_V$ est l'idempotent primitif du centre de $S$ qui n'agit pas par $0$
sur $V$.
D\'ecomposons $e_V$ en sommes d'idempotents primitifs de $S'$, $e_{V,1},\ldots,
e_{V,n_V}$ deux \`a deux orthogonaux, $e_V=\sideset{}{^\perp}\sum_{\!\!i} e_{V,i}$.
Il existe des entiers $d_i(V)$ tels que
$\phi(\alpha)=\sum_{V,i}\alpha^{d_i(V)}e_{V,i}$ pour tout
$\alpha\in k^\times$.

Consid\'erons maintenant le cocaract\`ere $\psi:\BG_m\to\Int(A),
\alpha\mapsto (a\mapsto \phi(\alpha)a\phi(\alpha)^{-1})$.
Alors, $\BG_m$ agit (via $\psi$) avec le poids $d_i(V)-d_j(W)$ sur
$e_{V,i}Ae_{W,j}$.

Par cons\'equent, l'alg\`ebre gradu\'ee $(A,\psi \pi)$ est \'egale
\`a l'alg\`ebre gradu\'ee
$\bigoplus_{V,W,i,j}e_{V,i}Ae_{W,j}\langle d_j(W)-d_i(V)\rangle\iso
\Endgr_{(A,\pi)}(\bigoplus_{V,i}Ae_{V,i}\langle d_i(V)\rangle)$.
La r\'eciproque est claire.
\end{proof}

Ainsi, la donn\'ee d'un morphisme non trivial $\BG_m\to\Out(A)$ d\'etermine
une graduation ``\`a \'equivalence de Morita pr\`es''~:

\begin{cor}
Supposons $A$ basique.
Deux graduations sur $A$ donnent lieu \`a des alg\`ebres gradu\'ees
Morita-\'equivalentes si et seulement si les cocaract\`eres correspondant
de $\Out(A)$ sont conjugu\'es.
\end{cor}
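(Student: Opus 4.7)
Le plan est de constater que ce corollaire est essentiellement une reformulation de la proposition pr\'ec\'edente, une fois sa condition (ii) identifi\'ee avec la condition d'\'equivalence de Morita gradu\'ee. La preuve se ferait en deux directions.

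D'abord, je traiterais l'implication ``cocaract\`eres conjugu\'es $\Rightarrow$ \'equivalence de Morita gradu\'ee''. Comme $A$ est basique, le quotient $A/J(A)$ est un produit de copies de $k$, donc chaque $A$-module simple est de dimension $1$ sur $k$, et l'indice $i$ dans la condition (ii) de la proposition pr\'ec\'edente est trivial. La condition (ii) se lit donc $(A,\pi')\simeq\Endgr_{(A,\pi)}(M)$ comme alg\`ebres gradu\'ees, o\`u $M=\bigoplus_V P_V\langle d(V)\rangle$ contient chaque projectif ind\'ecomposable de $A$ exactement une fois (\`a un d\'ecalage de graduation pr\`es). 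Un tel $M$ est un progenerateur gradu\'e de $(A,\pi)\mmodgr$, et le foncteur $\Homgr_{(A,\pi)}(M,-)$ fournit une \'equivalence de Morita gradu\'ee entre $(A,\pi)$ et $\Endgr_{(A,\pi)}(M)\simeq(A,\pi')$.

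R\'eciproquement, supposons $(A,\pi)$ et $(A,\pi')$ gradu\'ees-Morita \'equivalentes. Alors il existe un progenerateur gradu\'e $M$ de $(A,\pi)\mmodgr$ tel que $\Endgr_{(A,\pi)}(M)\simeq(A,\pi')$ comme alg\`ebres gradu\'ees. Puisque $A$ est basique, $(A,\pi')$ l'est aussi, ce qui force $M$ \`a \^etre basique~: chaque projectif gradu\'e ind\'ecomposable de $(A,\pi)$ apparait avec multiplicit\'e au plus $1$ comme facteur direct de $M$. Par le th\'eor\`eme de Krull--Schmidt gradu\'e, les projectifs gradu\'es ind\'ecomposables sont exactement les $P_V\langle d\rangle$ pour $V$ simple et $d\in\BZ$, deux \`a deux non gradu\'e-isomorphes. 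On en d\'eduit $M\simeq\bigoplus_V P_V\langle d(V)\rangle$ pour des entiers $d(V)$, ce qui v\'erifie la condition (ii) de la proposition pr\'ec\'edente, et celle-ci fournit la conjugaison des cocaract\`eres dans $\Out(A)$.

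L'\'etape d\'elicate est la seconde direction, o\`u il faut invoquer Krull--Schmidt gradu\'e et l'identification explicite des projectifs gradu\'es ind\'ecomposables pour d\'eduire du caract\`ere basique de $(A,\pi')$ la forme voulue du progenerateur $M$. Le reste n'est qu'une traduction des notions~; le contenu essentiel r\'eside d\'ej\`a dans la proposition pr\'ec\'edente.
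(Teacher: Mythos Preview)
Your proposal is correct and aligns with the paper's approach: the corollary is stated without proof, as an immediate reformulation of the preceding proposition, and you have spelled out precisely the translation between condition (ii) and graded Morita equivalence that the paper leaves implicit. The one point you might make slightly more explicit in the converse direction is why $M$ has \emph{exactly} one summand $P_V\langle d(V)\rangle$ per simple $V$ (rather than, say, $P_V\langle d\rangle\oplus P_V\langle d'\rangle$ for $d\neq d'$): this follows because the number of indecomposable summands of $M$ equals the number of simples of $\Endgr_{(A,\pi)}(M)\simeq(A,\pi')$, which is $|\CS|$, while the progenerator condition forces each $P_V$ to occur with at least one shift.
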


\begin{rem}
Lorsque l'alg\`ebre $A$ n'est pas basique, les r\'esultats pr\'ec\'edents
restent vrais en rempla\c{c}ant la conjugaison par $\Out(A)$ par la
conjugaison par $\Pic(A)$.

Notons que l'ensemble des classes de ``Morita-\'equivalence'' de graduations
n'est pas invariant par \'equivalence d\'eriv\'ee. Soit $A$ une
$k$-alg\`ebre de dimension finie munie d'une graduation qui n'est 
pas Morita-\'equivalente \`a la graduation triviale. Soit $A'$ une
alg\`ebre d\'eriv\'ee-\'equivalente \`a $A$, mais non Morita-\'equivalente.
On munit $A'$ d'une graduation compatible \`a l'\'equivalence.
On peut munir $A\times A'$ de deux graduations non Morita-\'equivalentes:
la premi\`ere est la graduation non triviale sur $A$ et la
graduation triviale sur $A'$, 
la seconde est la graduation triviale sur $A$ et la
graduation non triviale sur $A'$.

  L'alg\`ebre $A\times A'$ est
d\'eriv\'ee-\'equivalente \`a $A\times A$ et les deux graduations
pr\'ec\'edentes sont compatibles respectivement avec les deux graduations
suivantes sur $A\times A$:
la premi\`ere est la graduation non triviale sur le premier facteur $A$ et la
graduation triviale sur le second facteur $A$, 
la seconde est la graduation triviale sur le premier facteur $A$ et la
graduation non triviale sur le second facteur $A$.
Ces deux graduations sur $A\times A$ sont Morita-\'equivalentes.
\end{rem}

\subsection{Graduations et positivit\'e}

\subsubsection{}
Soit $A$ une $k$-alg\`ebre finie gradu\'ee.
Le lemme suivant est imm\'ediat (on reprend les notations de
\S \ref{dictionnaire})

\begin{lemme}
\label{degpositifs}
Les assertions suivantes sont \'equivalentes
\begin{itemize}
\item
la graduation est en degr\'es positifs ;
\item
le morphisme $\pi$ s'\'etend en un morphisme de vari\'et\'es $\BA^1\to\End(A)$
(o\`u $\BG_m$ est vu comme l'ouvert $\BA^1-\{0\}$) ;
\item
le morphisme $\rho$ provient d'un morphisme d'alg\`ebres $A\to A[t]$.
\end{itemize}
\end{lemme}

\smallskip
\begin{rem}
Lorsque $A$ est basique (\ie, les $A$-modules simples sont de dimension $1$),
alors $A/JA$ est un produit de corps, donc est en degr\'e $0$.
Notons que si $A$ n'est pas basique, il se peut que $A/JA$ ne soit
pas en degr\'e $0$. En changeant la graduation sans
changer le morphisme $\BG_m\to\Out(A)$, on peut se ramener au cas o\`u
$A/JA$ est en degr\'e $0$.

Lorsque $A$ est semi-simple et en degr\'es positifs, alors $A=A_0$.
\end{rem}

\begin{lemme}
\label{Jacobson}
L'alg\`ebre $A$ est en degr\'es positifs si et seulement si $A/J^2A$
est en degr\'es positifs.
\end{lemme}

\begin{proof}
Supposons $A/J^2A$ en degr\'es positifs.
Soit $S$ une sous-alg\`ebre de $A_0$ qui s'envoie bijectivement sur $A/JA$.
Une famille d'\'el\'ements de $JA$ qui engendre $JA/J^2A$ comme
$k$-espace vectoriel engendre $A$ comme $S$-alg\`ebre, d'o\`u le
r\'esultat.
\end{proof}

\subsubsection{}

\begin{prop}
\label{caracpositivite}
Soit $\bar{\pi}:\BG_m\to\Out(A)$. Les assertions suivantes sont
\'equivalentes
\begin{itemize}
\item[(i)] il existe un rel\`evement de $\bar{\pi}$ en un morphisme
$\BG_m\to\Aut(A)$ qui s'\'etend en un morphisme
$\BA^1\to \End(A)$ ;
\item[(ii)] pour tout entier positif $n$, le groupe
$\BG_m$ agit (via $\bar{\pi}$) avec
des poids positifs sur $(JA/J^2A)^{\odot_A^n}$ ;
\item[(iii)] pour toute graduation associ\'ee \`a un morphisme
$\pi:\BG_m\to\Aut(A)$ relevant $\bar{\pi}$,
pour toute suite $S_n=S_0,S_1,\ldots,S_{n-1}$ de $A$-modules
simples de degr\'e $0$ et pour tous $d_0,\ldots,d_{n-1}\in\BZ$ tels
que $\Ext^1(S_i,S_{i+1}\langle -d_i\rangle)\not=0$, alors $\sum_i d_i\ge 0$ ;
\item[(iv)] le morphisme $\BG_m\to\Out(A/J^2A)$ d\'eduit de $\bar{\pi}$
par le morphisme canonique se rel\`eve en un morphisme
$\BG_m\to\Aut(A/J^2A)$ qui s'\'etend en un morphisme $\BA^1\to \End(A/J^2A)$.
\end{itemize}
\end{prop}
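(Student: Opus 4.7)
Je propose de démontrer ces équivalences selon le schéma $(\mathrm{i})\Rightarrow(\mathrm{iv})\Rightarrow(\mathrm{iii})\Rightarrow(\mathrm{i})$, complété par l'équivalence $(\mathrm{ii})\Leftrightarrow(\mathrm{iii})$. L'ingrédient principal sera un argument classique de potentiel (du type Bellman--Ford) sur le carquois de Gabriel de $A$, associé au lemme \ref{Jacobson} qui permet de se ramener à l'algèbre $A/J^2A$ et à la description cyclique de la proposition \ref{tenseurcyclique}.

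L'implication $(\mathrm{i})\Rightarrow(\mathrm{iv})$ s'obtient en composant l'extension $\BA^1\to\End(A)$ avec la projection canonique $A\to A/J^2A$, qui est bien définie car toute extension d'automorphisme préserve $J^iA$ (voir la dernière remarque du \S 3.1.3). Pour $(\mathrm{iv})\Rightarrow(\mathrm{iii})$, quitte à modifier le relèvement $\pi'$ de $\bar\pi'$ par un cocaractère intérieur, on peut supposer $A/JA$ en degré~$0$ (remarque suivant le lemme \ref{degpositifs}); alors chaque $\Ext^1_A(S_i,S_{i+1}\langle -d_i\rangle)$, qui coïncide avec $\Ext^1_{A/J^2A}(S_i,S_{i+1}\langle -d_i\rangle)$, ne peut être non nul que pour $d_i\ge 0$, donc \emph{a fortiori} $\sum_i d_i\ge 0$; il faut noter ici que cette somme ne dépend que de $\bar\pi$ et non du relèvement, puisqu'une modification intérieure par $\phi(t)=\sum_V t^{d(V)}e_V$ décale le poids de $e_VAe_W$ de $d(V)-d(W)$, ces termes se télescopant le long d'un cycle. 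Enfin, $(\mathrm{ii})\Leftrightarrow(\mathrm{iii})$ résulte immédiatement de la proposition \ref{tenseurcyclique}, après dualité: les uplets $(S_0,\ldots,S_{n-1})$ indexent les composantes de $(JA/J^2A)^{\odot_A^n}$, et le poids de $\BG_m$ sur la composante décalée par $(-d_0,\ldots,-d_{n-1})$ est précisément $\sum_i d_i$.

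Le cœur de la preuve est $(\mathrm{iii})\Rightarrow(\mathrm{i})$. Je pars d'un relèvement $\pi_0:\BG_m\to\Aut(A)$ de $\bar\pi$ fourni par le premier lemme de \S\ref{se:chgtgrad}. Comme l'image de $\bar\pi$ est connexe donc contenue dans $\Out^0(A)$, on peut choisir $\pi_0$ se factorisant par $F(A)$, agissant donc trivialement sur $A/JA$. En suivant la stratégie de la preuve de la proposition \ref{conjugaison}, je fixe une sous-algèbre semi-simple maximale $S\subset A$, une sous-algèbre de Cartan $S'\subset S$, et un tore maximal $T$ de $\Aut(A)$ contenant simultanément l'image de $\pi_0$ et l'image $T''$ de $T'={S'}^\times$ dans $\Int(A)$. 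Pour une famille d'entiers $(d(V))_V$ indexée par les classes de modules simples, je pose $\phi(t)=\sum_V t^{d(V)}e_V\in T'$ et $\pi(t)=\ad(\phi(t))\circ\pi_0(t)$; ce produit est bien un morphisme de groupes, car $\ad(\phi(t))$ et $\pi_0(t)$ vivent dans le tore abélien $T$, et $\pi$ relève encore $\bar\pi$. Sous $\pi$, le poids de $\BG_m$ sur $e_V(JA/J^2A)e_W$ est décalé de $d(V)-d(W)$ par rapport à celui sous $\pi_0$. Or l'hypothèse (iii) énonce exactement la condition d'absence de cycle strictement négatif pour le graphe orienté fini dont les sommets sont les classes de $A$-modules simples et les arêtes $V\to W$ de poids $d$ sont les classes non nulles de $\Ext^1_A(V,W\langle -d\rangle)$; le théorème classique d'existence d'un potentiel garantit alors le choix d'entiers $d(V)$ rendant tous les poids modifiés $\ge 0$. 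Avec ce choix, $JA/J^2A$ est en degrés $\ge 0$ sous $\pi$, donc $A/J^2A$ aussi, et le lemme \ref{Jacobson} conclut que $\pi$ place $A$ en degrés $\ge 0$.

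L'obstacle technique principal est de garantir que la modification intérieure $\pi(t)=\ad(\phi(t))\pi_0(t)$ reste un morphisme de groupes algébriques: sans précaution, ce produit n'est pas multiplicatif. La résolution passe par le plongement simultané de $\pi_0(\BG_m)$ et de l'image de $\phi$ dans un tore maximal commun de $\Aut(A)$, exactement comme dans la preuve de \ref{conjugaison}; c'est ce qui légitime l'identification combinatoire du problème de positivité au problème de potentiel sur un graphe fini pondéré, et c'est la seule raison pour laquelle (iii) -- condition cyclique, donc invariante par modification intérieure -- peut être transformée en (i) -- condition pointue exigeant un choix particulier du relèvement de $\bar\pi$.
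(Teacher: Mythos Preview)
Your proof is correct and follows the same strategy as the paper's: the equivalence (ii)$\Leftrightarrow$(iii) via proposition \ref{tenseurcyclique}, and the crucial implication (iii)$\Rightarrow$(i) by a potential argument on the $\Ext$-quiver---your ``Bellman--Ford'' black box is exactly the content of the paper's lemme \ref{lemmefonction}. One minor simplification in your (iv)$\Rightarrow$(iii): the inner modification of $\pi'$ is superfluous, since a positive grading on $A/J^2A$ already forces the semisimple quotient $A/JA$ into degree~$0$ (this is the sentence just \emph{before} the remark you cite), so each $d_i\ge 0$ holds directly for the lift furnished by (iv).
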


\begin{proof}
L'\'equivalence entre (ii) et (iii) r\'esulte de la proposition
\ref{tenseurcyclique}. Notons que (i) implique bien s\^ur (ii) et (iv). 

Soit $f:\Out(A)\to\Out(A/J^2A)$ le morphisme canonique.
Le morphisme canonique $\Int(A)\to\Int(A/J^2A)$ est surjectif et sa restriction
\`a un tore maximal est un isomorphisme, donc
tout rel\`evement de $f\bar{\pi}$ en un morphisme 
$\BG_m\to\Aut(A/J^2A)$ provient d'un morphisme $\pi:\BG_m\to\Aut(A)$
relevant $\bar{\pi}$. Les lemmes \ref{degpositifs} et \ref{Jacobson}
montrent alors que (iv)$\Longrightarrow$(i).

Supposons (iii).
Choisissons un rel\`evement de $\bar{\pi}$ en un morphisme 
$\pi:\BG_m\to\Aut(A)$ tel que $A/JA$ est en degr\'e $0$.

Soient $S$ et $T$ deux $A$-modules simples de degr\'e $0$. Soit
$f(S,T)$ le plus petit entier $d$ tel qu'il existe des
$A$-modules simples $S_1,\ldots,S_n$ de degr\'e $0$
 et des entiers $d_0,\ldots,d_n$ v\'erifiant
\begin{multline*}
\Ext^1(S,S_1\langle -d_0\rangle)\not=0,\Ext^1(S_1,S_2\langle
-d_1\rangle)\not=0,\ldots,\\
\Ext^1(S_{n-1},S_n\langle -d_{n-1}\rangle)\not=0,\Ext^1(S_n,T\langle
-d_n\rangle)\not=0
\end{multline*}
$$\textrm{et }\sum d_i=d.$$
On a $f(S,T)+f(T,U)\ge f(S,U)$ et
$f(S,T)+f(T,S)\ge 0$ pour tous $S,T$ et $U$ simples
de degr\'e $0$.
Par cons\'equent, il existe une application $d$ de l'ensemble des classes
d'isomorphisme de
$A$-modules simples de degr\'e $0$ vers les entiers telle que la fonction
$f'(S,T)=f(S,T)+d(S)-d(T)$ ne prend que des valeurs positives
(cf lemme \ref{lemmefonction} ci-dessous).
En rempla\c{c}ant $A$ par l'alg\`ebre gradu\'ee
$\Endgr_{(A,\pi)}(\bigoplus_S P_S^{\dim S}\langle d(S)\rangle)$,
on a $\Ext^1(S,T\langle d\rangle)=0$ pour $d>0$ pour
tous modules simples $S$ et $T$ de degr\'e $0$. Puisque on a un isomorphisme
de $(A,A)$-bimodules gradu\'es (lemme \ref{descriptionT})
$$JA/J^2A\simeq \bigoplus_{S,T,d}
(T\otimes S^*)^{\dim\Ext^1(S,T\langle d\rangle)}
\langle d\rangle,$$
on en d\'eduit que $JA/J^2A$ est en degr\'es positifs, donc
$A$ aussi, par le lemme \ref{Jacobson}. On a donc d\'emontr\'e
(iii)$\Longrightarrow$(i).
\end{proof}

\begin{lemme}
\label{lemmefonction}
Soit $E$ un ensemble fini et $f:E\times E\to\BZ$ telle que
$f(x,y)+f(y,z)\ge f(x,z)$ et $f(x,y)+f(y,x)\ge 0$ pour tous $x,y,z\in E$.
Alors, il existe $d:E\to\BZ$ telle que $f':E\times E\to\BZ$ donn\'ee
par $f'(x,y)=f(x,y)+d(x)-d(y)$ ne prend que des valeurs positives.
\end{lemme}

\begin{proof}
On prouve le lemme par r\'ecurrence sur la valeur absolue de la somme
des valeurs n\'egatives de $f$.
Soit $E_-$ l'ensemble des $x\in E$ tels qu'il existe $y\in E$ avec $f(x,y)<0$.
Supposons $E_-\not=\emptyset$.
Soit $c:E\to\BZ$ donn\'ee par $c(x)=1$ si $x\in E_-$ et
$c(x)=0$ sinon. On consid\`ere la fonction $g$ donn\'ee par
$g(x,y)=f(x,y)+c(x)-c(y)$.
Si $x\in E_-$, alors $g(x,y)=f(x,y)$ ou $g(x,y)=f(x,y)+1$.
Soient $x\not\in E_-$ et $y\in E$ tels que $f(x,y)=0$. Alors,
tout $z$, on a $f(x,z)\le f(y,z)$. Par cons\'equent, $y\not\in E_-$.
On en d\'eduit que $g(x,y)=f(x,y)$. On a montr\'e que la somme
des valeurs absolues des valeurs n\'egatives de $g$ est strictement
inf\'erieure \`a la somme correspondante pour $f$.
Puisque $g$ v\'erifie encore les hypoth\`eses du lemme, on conclut par
r\'ecurrence.
\end{proof}

\begin{rem}
Dans la proposition, on peut bien s\^ur supposer que $n$ est au plus
le nombre de $A$-modules simples.
\end{rem}

\begin{rem}
La positivit\'e de l'homologie cyclique ou de l'homologie de Hochschild
de $A$ ne suffit pas pour avoir une graduation positive.
\end{rem}

\subsection{Dualit\'e}
\label{dualite}
\subsubsection{}
Soit $A$ une alg\`ebre de Frobenius, \ie, on se donne
$\nu$ un automorphisme de $A$ et
$A^*\iso A_\nu$ un isomorphisme de $(A,A)$-bimodules.
On suppose $A$ gradu\'ee et ind\'ecomposable.
On notera $n=n_A=\sup\{i|A_i\not=0\}-\inf\{i|A_i\not=0\}$.

\begin{prop}
\label{dual}
On a des isomorphismes de
$(A,A)$-bimodules gradu\'es
$A^*\simeq A_\nu \langle n\rangle$. Plus g\'en\'eralement,
$(\soc^i A)^*\simeq (A/J^i A)\otimes_A A_\nu\langle n\rangle$ pour tout
$i\ge 0$.
\end{prop}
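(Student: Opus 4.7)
Le plan consiste en trois étapes. On commence par montrer que, quitte à changer $\nu$ dans sa classe d'isomorphisme comme $(A,A)$-bimodule, on peut supposer que $\nu$ est un automorphisme gradué de degré $0$. Pour cela, $A^*$ est naturellement un $A^\en$-module gradué, indécomposable (car $A$ l'est comme algèbre, donc comme $A^\en$-module). L'hypothèse $A^*\simeq A_\nu$ comme $A^\en$-modules et le fait que $A^*$ soit $\BG_m$-équivariant (par la graduation de $A$) entraînent que $(A_\nu)_g\simeq A_\nu$ pour tout $g\in\BG_m$, via le transport de structure par l'isomorphisme. La proposition \ref{graduable} fournit alors une structure de $(A^\en\rtimes\BG_m)$-module sur $A_\nu$, unique à caractère près. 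Cette graduation, restreinte à l'action à gauche de $A$, doit coïncider avec celle de $A$ (par unicité pour le $A$-module $A$), et la compatibilité avec l'action à droite $m\cdot a=m\nu(a)$ impose alors à $\nu$ de préserver la graduation.

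Une fois $\nu$ gradué, l'isomorphisme $A^*\simeq A_\nu$ se relève en un isomorphisme gradué $A^*\simeq A_\nu\langle m\rangle$ pour un certain $m\in\BZ$ (à nouveau par la partie d'unicité de la proposition \ref{graduable} appliquée aux deux $(A^\en\rtimes\BG_m)$-modules indécomposables en présence). Pour déterminer $m$, on compare les supports des graduations~: en supposant $A$ positivement graduée (concentrée en degrés $[0,n]$, ce qui est implicite dans l'énoncé), $A^*$ est concentrée en degrés $[-n,0]$ tandis que $A_\nu\langle m\rangle$ l'est en $[-m,n-m]$, d'où $m=n$.

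Pour le cas général $\soc^iA$, on dualise la suite exacte courte $0\to\soc^iA\to A\to A/\soc^iA\to 0$ de $(A,A)$-bimodules gradués, obtenant une surjection de bimodules gradués $A^*\twoheadrightarrow(\soc^iA)^*$. Via l'isomorphisme $A^*\simeq A_\nu\langle n\rangle$ de la première partie (et l'identification $A^*\iso A_\nu,\ f\mapsto a_f$ avec $f(x)=\lambda(xa_f)$ pour $\lambda$ une forme de Frobenius homogène), le noyau s'identifie à l'orthogonal de $\soc^iA$ dans $A_\nu\langle n\rangle$ pour la forme de Frobenius. La relation définissante $\soc^iA\cdot J^iA=0$ garantit l'inclusion $J^iA_\nu\langle n\rangle\subseteq$ noyau, et la dualité parfaite de Frobenius entre $A/J^iA$ et $\soc^iA$ (fait standard pour les algèbres de Frobenius) fournit l'égalité. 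On obtient ainsi $(\soc^iA)^*\simeq A_\nu\langle n\rangle/J^iA_\nu\langle n\rangle\simeq(A/J^iA)\otimes_A A_\nu\langle n\rangle$ comme bimodules gradués.

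L'obstacle principal se situe à la première étape~: il faut vérifier avec soin que l'unicité de la proposition \ref{graduable} se traduit effectivement en la contrainte que $\nu$ est gradué de degré $0$ (et non pas seulement à décalage de PIMs près sur certains blocs de $A$). L'étape 3 est routinière modulo la vérification (également standard mais sensible aux conventions gauche/droite du socle) de la dualité Frobenius entre filtrations radicale et socle.
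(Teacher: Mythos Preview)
Your approach is the same as the paper's, but considerably more detailed. The paper dispatches the first assertion in one sentence by invoking Proposition~\ref{graduable} (both $A_\nu$ and $A^*$ are indecomposable $A^\en$-modules, isomorphic non-graded, hence graded-isomorphic up to a shift), and the second via the canonical isomorphism $(\soc^i A)^*\iso A^*/J^iA^*$, which is exactly your orthogonality computation repackaged.

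The subtleties you flag are genuine and are simply not addressed in the paper. Your appeal to ``unicit\'e pour le $A$-module $A$'' in step~1 does have the gap you suspect: $A$ is not indecomposable as a left $A$-module, so Proposition~\ref{graduable} only fixes the restricted grading up to independent shifts on each indecomposable projective summand; one really needs the $A^\en$-indecomposability of $A_\nu$ (which you have) to pin things down to a single global shift. Your observation that the support comparison forces the shift to be $\sup+\inf$ rather than $n=\sup-\inf$ unless $A_{<0}=0$ is also correct; the paper's statement tacitly assumes this normalisation (as in all later applications) without saying so.
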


\begin{proof}
La premi\`ere assertion r\'esulte imm\'ediatement du lemme \ref{graduable},
puisque $A_\nu$ et $A^*$ sont ind\'ecomposables et
isomorphes comme $(A,A)$-bimodules non gradu\'es.

L'isomorphisme canonique $(\soc^i A)^*\iso (A^*/J^i A^*)$, permet de
d\'eduire la deuxi\`eme assertion.
\end{proof}

Un isomorphisme $f:A_\nu\langle n\rangle\iso A^*$ provient
d'une forme lin\'eaire
$t:A\langle n\rangle \to k$~: on a
$$f=\hat{t}:A_\nu\langle n\rangle \iso A^*,\ a\mapsto (a'\mapsto t(a'a)).$$
Rappelons que l'alg\`ebre $A$ (munie de $f$ ou, de mani\`ere \'equivalente,
de $t$) est sym\'etrique si $\nu=\id$.

\smallskip
Fixons une telle forme. Pour tout $A$-module gradu\'e $V$,
on dispose alors d'isomorphismes de $A^\circ$-modules gradu\'es (le premier
est une adjonction, le second fournit par $f$)~:
$$V^*\iso\Homgr_A(V,A^*)\iso\Homgr_A(V,A_\nu)\langle n\rangle.$$

Soit $X$ un $A$-module gradu\'e et $P$ un $A$-module projectif ind\'ecomposable
gradu\'e.
On a un accouplement parfait
$$\Hom(P,X)\otimes_k \Hom(X,P_\nu\langle n \rangle)\to k.$$

\subsubsection{}
\label{dualiteautoinjective}
Supposons maintenant $A$ non-n\'ecessairement de Frobenius, mais auto-injective.
Soit $A'$ une
alg\`ebre basique Morita-\'equivalente \`a $A$~: c'est encore une
alg\`ebre auto-injective, donc elle est de Frobenius.

On dispose d'un
automorphisme $\nu$ de l'ensemble des classes d'isomorphisme
de $A$-modules simples (=ensemble des classes d'isomorphisme
de $A'$-modules simples),
l'automorphisme de Nakayama, et d'un accouplement parfait
$$\Hom(P_V,X)\otimes_k \Hom(X,P_{\nu(V)}\langle n \rangle)\to k$$
pour tout $A$-module simple $V$.

\subsection{Matrices de Cartan}

On suppose ici $A$ auto-injective et ind\'ecomposable.
Soit $\CS$ l'ensemble des classes d'isomorphisme de $A$-modules simples de
degr\'e $0$ et $r$ son cardinal.
La matrice de Cartan gradu\'ee $C$ de $A$ est la $(\CS\times\CS)$-matrice sur
$\BZ[q,q^{-1}]$ dont le coefficient $C(V,W)$ est
$$C(V,W)=\sum_i q^i\dim \Hom(P_V,P_W\langle i\rangle).$$

On notera $x\mapsto \bar{x}$ l'automorphisme de $\BZ[q,q^{-1}]$
qui \'echange $q$ et $q^{-1}$.

\begin{prop}
\label{Cartan}
On a $C(V,W)=q^n \overline{C(W,\nu(V))}$.

Si $A_{<0}=0$, alors
$$\det C=c+a_1q+\ldots+a_{nr-1}q^{nr-1}+\eps(\nu)cq^{nr}$$
o\`u $a_1,\ldots,a_{nr-1}$ sont des entiers,
$c$ est le d\'eterminant de la matrice de Cartan de $A_0$ et
$\eps(\nu)$ est le signe de la permutation $\nu$ de $\CS$.
\end{prop}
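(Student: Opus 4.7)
Pour la premi\`ere assertion, on applique l'accouplement parfait de \S \ref{dualiteautoinjective} avec $X=P_W\langle i\rangle$, ce qui donne
$$\dim\Hom(P_V,P_W\langle i\rangle)=\dim\Hom(P_W\langle i\rangle,P_{\nu(V)}\langle n\rangle)=\dim\Hom(P_W,P_{\nu(V)}\langle n-i\rangle).$$
En multipliant par $q^i$ et en sommant, puis en posant $j=n-i$, on obtient
$$C(V,W)=\sum_i q^i\dim\Hom(P_W,P_{\nu(V)}\langle n-i\rangle)=q^n\sum_j q^{-j}\dim\Hom(P_W,P_{\nu(V)}\langle j\rangle)=q^n\overline{C(W,\nu(V))}.$$

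Pour la seconde assertion, je commence par transformer la relation du (i) en une \'equation fonctionnelle pour $\det C$. Notons $C^*$ la matrice d\'efinie par $C^*(V,W)=\overline{C(W,\nu(V))}$, de sorte que $C=q^n C^*$, d'o\`u $\det C=q^{nr}\det C^*$. Un changement d'indice de sommation dans la formule de Leibniz, en rempla\c{c}ant la permutation muette $\tau$ par $\tau\nu^{-1}$, donne
$$\det C^*=\sum_\tau\sgn(\tau)\prod_V\overline{C(\tau(V),\nu(V))}=\sgn(\nu)\sum_{\tau'}\sgn(\tau')\prod_U\overline{C(\tau'(U),U)}=\eps(\nu)\overline{\det C}.$$
On aboutit ainsi \`a l'\'equation fonctionnelle
$$\det C=\eps(\nu)\,q^{nr}\,\overline{\det C}.$$

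Sous l'hypoth\`ese $A_{<0}=0$, chaque entr\'ee de $C$ est un polyn\^ome en $q$, donc $\det C=\sum_{i\ge 0}a_i q^i\in\BZ[q]$. L'\'equation fonctionnelle ci-dessus se traduit alors par les sym\'etries $a_i=\eps(\nu)a_{nr-i}$~; en particulier $a_i=0$ pour $i>nr$, et $a_{nr}=\eps(\nu)a_0$. Il reste \`a identifier $a_0$. En \'evaluant en $q=0$, on obtient $a_0=\det C_0$, o\`u $C_0$ est la matrice de coefficient $(V,W)$ \'egal \`a $\dim\Hom(P_V,P_W)_0=\dim e_V A_0 e_W$ (dans le cas basique), c'est-\`a-dire la matrice de Cartan de $A_0$ ; d'o\`u $a_0=c$ et $a_{nr}=\eps(\nu)c$, ce qui ach\`eve la preuve.

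Le seul point un peu d\'elicat est le calcul combinatoire donnant l'\'equation fonctionnelle~: il faut manipuler soigneusement la permutation $\nu$ et l'involution $q\mapsto q^{-1}$ sous le signe somme. Le reste n'est que cons\'equence formelle de la sym\'etrie obtenue et de l'annulation des coefficients de degr\'e n\'egatif.
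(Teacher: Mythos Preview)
Your proof is correct and follows the same route as the paper: the duality pairing of \S\ref{dualiteautoinjective} for the first identity, then the functional equation $\det C=\eps(\nu)\,q^{nr}\,\overline{\det C}$, which the paper invokes without writing out the Leibniz-formula manipulation you provide. The only minor variation is that the paper bounds $\deg(\det C)\le nr$ directly from $\deg C(V,W)\le n$, whereas you extract this bound from the functional equation itself; both arguments are immediate.
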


\begin{proof}
La premi\`ere \'egalit\'e r\'esulte de l'isomorphisme
du \S \ref{dualiteautoinjective}:
$$\Hom(P_V,P_W\langle i\rangle)^*\simeq
\Hom(P_W,P_{\nu(V)}\langle n-i\rangle).$$

Lorsque $A_{<0}=0$, la matrice de Cartan est \`a coefficients dans
$\BZ[q]$ et il est clair que sa partie constante est la
matrice de Cartan de $A_0$. Le degr\'e de $\det C$ est au plus
$nr$ et le coefficient de $q^{nr}$ dans $\det C$ est le terme constant de
$q^{nr}\det\bar{C}=\eps(\nu)\det C$.
\end{proof}

\subsection{Degr\'e $0$}
\label{degre0}

Lorsque $A/JA$ est concentr\'ee en degr\'e $0$ (c'est le cas si
$A$ est basique car alors $A/J(A)$ est un produit de copies de $k$),
alors
les $A_0$-modules simples sont les restrictions des $A$-modules simples
et $A_{<0}\oplus A_{>0}\subseteq J(A)$.

\smallskip
On prend dans le reste de cette partie \S \ref{degre0} pour $A$
une alg\`ebre gradu\'ee telle que $A_{<0}=0$.
On a alors $A_0\iso A/A_{>0}$.

On a un foncteur exact pleinement fid\`ele
de $A/A_{>0}\otimes_{A_0}-:A_0\mMod\to A\mmodgr$
d'image la sous-cat\'egorie de Serre
des $A$-modules gradu\'es concentr\'es en degr\'e $0$.
C'est un foncteur exact, d'inverse \`a gauche le foncteur exact ``poids 0''.
Le foncteur $\Hom_A(A/A_{>0},-)$ est adjoint \`a droite de 
$A/A_{>0}\otimes_{A_0}-$ et est aussi un inverse \`a gauche.

En d\'erivant, on obtient
\begin{prop}
\label{pr:A0ff}
Le foncteur $A/A_{>0}\otimes_{A_0}^\BL-:
D(A_0\mMOD)\to D(A\mMODgr)$ est pleinement fid\`ele.
\end{prop}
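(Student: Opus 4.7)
The plan is to realize $F = A/A_{>0}\otimes^{\BL}_{A_0}-$ as the left adjoint in an adjoint pair and verify that the unit is a quasi-isomorphism. Since $A/A_{>0}\simeq A_0$ as a right $A_0$-module (via the splitting $A_0\hookrightarrow A$ coming from $A=A_0\oplus A_{>0}$), $F$ is exact and identifies with the restriction of scalars along the graded algebra morphism $A\twoheadrightarrow A_0$ (with $A_0$ placed in degree $0$); concretely, it sends $M$ to $M$ placed in grading-degree $0$ with $A_{>0}$ acting by zero. Its right adjoint on derived categories is $G=R\Hom_{A\mMODgr}(A/A_{>0},-)$, and it suffices to verify that the unit $\eta_M\colon M\to GF(M)$ is a quasi-isomorphism for every $M\in D(A_0\mMOD)$.

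The key step is to construct a projective resolution $P_\bullet\to A/A_{>0}$ in $A\mMODgr$ with $P_0=A$ and each $P_i$ (for $i\ge 1$) a direct sum of shifts $A\langle -n\rangle$ with $n\ge 1$. Such a resolution exists because $A_{>0}=\ker(A\to A/A_{>0})$ is concentrated in grading degrees $\ge 1$, and any graded submodule of a module concentrated in degrees $\ge 1$ is itself concentrated in degrees $\ge 1$; by induction, each successive syzygy can be covered by a direct sum of shifts $A\langle -n\rangle$ with $n\ge 1$.

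The computation then collapses. Since $F(M)$ is concentrated in grading-degree $0$, a degree-$0$ graded $A$-linear map from $A\langle -n\rangle$ to $F(M)$ is determined by the image of the generator (sitting in degree $n$), so $\Hom_{A\mMODgr}(A\langle -n\rangle, F(M))=F(M)_n$ equals $M$ for $n=0$ and vanishes otherwise. Hence $\Hom_{A\mMODgr}(P_i,F(M))=0$ for $i\ge 1$ and $\Hom_{A\mMODgr}(P_0,F(M))=M$, so the complex computing $R\Hom_{A\mMODgr}(A/A_{>0},F(M))$ is quasi-isomorphic to $M$ placed in cohomological degree $0$, and one checks this identification is precisely $\eta_M$. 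The extension from $M\in A_0\mMOD$ to complexes $M\in D(A_0\mMOD)$ is immediate since the Hom computation is pointwise in each cohomological degree of $M$.

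The only real bookkeeping point is the existence of a resolution with the stated positivity property, which is essentially automatic from the closure of ``graded modules in degrees $\ge 1$'' under submodules. No deeper obstacle is expected: once this resolution is in hand, the grading is doing all the work, acting as an automatic \emph{perversity} condition that forces vanishing of the higher $\Ext$ groups into a module concentrated in grading-degree $0$.
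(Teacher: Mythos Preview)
Your proposal is correct and follows essentially the same route as the paper: both use that $G=R\Hom_{A\mMODgr}(A/A_{>0},-)$ is right adjoint to $F$ and show the unit is an isomorphism by proving $\Ext^i_{A\mMODgr}(A/A_{>0},M)=0$ for $i>0$ whenever $M$ is concentrated in grading-degree $0$. The paper states this vanishing in one line, while you supply the underlying mechanism (a graded projective resolution of $A/A_{>0}$ with all higher terms generated in strictly positive degrees); this is exactly the argument one would write to justify the paper's terse claim.
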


\begin{proof}
En effet, le foncteur $R\Hom_{A\mMODgr}^\bullet(A/A_{>0},-)$ est adjoint \`a
droite. C'est aussi un inverse \`a gauche car $\Ext^i_A(A/A_{>0},M)=0$
pour $i>0$ et $M$ un $A$-module tel que $M_0=M$. On en d\'eduit que
$A/A_{>0}\otimes_{A_0}^\BL-$ est pleinement fid\`ele.
\end{proof}

\begin{prop}
\label{dimproj}
On a l'\'equivalence des assertions suivantes~:
\begin{itemize}
\item[(i)] L'alg\`ebre $A_0$ est de dimension globale finie.
\item[(ii)] Le $A$-module $A_0$ engendre $D^b(A\mModgr)$ comme sous-cat\'egorie
\'epaisse close par d\'ecalage $\langle -\rangle$.
\item[(iii)] Pour tous $V,W$ dans $D^b(A\mModgr)$, on a
$\Hom_A(V,W[i])=0$ pour $i>>0$.

\smallskip
Si en outre $A$ est auto-injective, ces assertions sont \'equivalentes \`a
\item[(iv)]
Pour $V$ et $W$ dans $A\mstabgr$, on a
$\Ext^i_A(V,W)=0$ pour $i>>0$.
\end{itemize}

\smallskip
Plus pr\'ecis\'ement, la dimension globale de $A_0$ est le
plus petit entier $d$ tel que pour tous $A$-modules $V$ et $W$,
on a $\Ext^i_A(V,W)=0$ pour $i\ge (d+1)(1+b-a)$, o\`u $a$ (resp. $b$) est le
plus petit (resp. grand) entier tel que $V_a\not=0$ (resp. $W_b\not=0$).
\end{prop}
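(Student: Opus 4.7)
L'outil central est la proposition \ref{pr:A0ff}, qui fournit un plongement pleinement fidèle $A/A_{>0}\otimes_{A_0}^\BL -: D(A_0\mMOD) \to D(A\mMODgr)$. Combiné avec les décalages $\langle i\rangle$, ce plongement identifie les objets de $D^b(A\mModgr)$ dont la cohomologie est concentrée dans un seul degré à leurs analogues sur $A_0$. Par ailleurs, tout $A$-module gradué borné $V$ admet une filtration finie par sous-modules gradués $V \supseteq V_{\geq a+1}\supseteq V_{\geq a+2}\supseteq \cdots$ (la condition $A_{<0}=0$ assurant que chaque $V_{\geq i}$ est un sous-module), dont les quotients successifs $V_i$ sont concentrés en un seul degré, donc dans l'image du plongement.

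Le plan est alors d'établir séparément les implications. Pour (i) $\Rightarrow$ (ii), je note que si $\gldim A_0 = d < \infty$, tout $A_0$-module appartient à la sous-catégorie épaisse de $D^b(A_0\mMod)$ engendrée par $A_0$ ; en composant avec le plongement puis en dévissant via la filtration ci-dessus, tout objet de $D^b(A\mModgr)$ devient un cône itéré de décalés $A_0\langle j\rangle$. Pour (ii) $\Rightarrow$ (iii), on se ramène par dévissage à estimer $\Hom_A(A_0\langle j\rangle, A_0\langle k\rangle[i])$ : ce $\Ext$ non gradué est une somme finie (par contraintes de degré) de $\Ext_{A\mModgr}^i(A_0, A_0\langle s\rangle)$, que la proposition \ref{pr:A0ff} identifie à $\Ext^i_{A_0}(A_0, A_0) = 0$ pour $i > 0$. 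Pour (iii) $\Rightarrow$ (i), il suffit d'appliquer (iii) à deux $A_0$-modules simples $V,W$ vus comme $A$-modules gradués placés en degré $0$ : le plongement donne $\Ext^i_A(V,W) = \Ext^i_{A_0}(V,W)$, et la nullité pour $i\gg 0$ entraîne $\gldim A_0 < \infty$.

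Dans le cas auto-injectif, (iii) $\Rightarrow$ (iv) est immédiate puisque, pour $i\geq 1$, $\Ext^i_A$ coïncide avec le $\Hom$ dans $A\mstabgr$ entre modules non projectifs. Pour (iv) $\Rightarrow$ (i), j'observe qu'un $A_0$-module simple non $A_0$-projectif reste non $A$-projectif (sinon, un tel facteur direct projectif sur $A$ serait récupéré par restriction comme facteur projectif sur $A_0$), ce qui permet d'appliquer (iv) à deux tels simples et de conclure comme en (iii) $\Rightarrow$ (i).

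Enfin, pour la borne précise : le dévissage le long des filtrations de $V$ et de $W$ montre en fait $\Ext^i_A(V,W) = 0$ pour $i > \gldim A_0$, propriété strictement plus forte que la borne $i \geq (d+1)(1+b-a)$ annoncée (au moins lorsque $a \leq b$). Réciproquement, si cette borne est satisfaite pour un certain $d$, en testant sur deux $A_0$-modules simples placés en degré $0$ (d'où $a = b = 0$, la borne se réduisant à $d+1$), on obtient $\Ext^i_{A_0}(V,W) = 0$ pour $i \geq d+1$, donc $\gldim A_0 \leq d$. Le point délicat sera le contrôle scrupuleux du passage entre $\Ext$ gradué et non gradué, qui repose sur la finitude de la somme $\bigoplus_s \Ext_{A\mModgr}^i(-,-\langle s\rangle)$ grâce aux bornes de support des modules considérés.
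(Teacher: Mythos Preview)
Il y a une vraie lacune dans votre implication (ii)$\Rightarrow$(iii) et dans votre borne pr\'ecise, qui repose sur une m\'electure de la proposition~\ref{pr:A0ff}. Celle-ci affirme que le foncteur $A/A_{>0}\otimes_{A_0}^{\BL}-$ est pleinement fid\`ele, donc identifie $\Ext^i_{A\mModgr}(M,N)$ \`a $\Ext^i_{A_0}(M,N)$ \emph{lorsque $M$ et $N$ sont tous deux concentr\'es en degr\'e~$0$}. Elle ne dit rien de $\Ext^i_{A\mModgr}(A_0,A_0\langle s\rangle)$ pour $s\neq 0$, et c'est pr\'ecis\'ement ce dont votre d\'evissage a besoin d\`es que $j\neq k$. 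Votre affirmation que ``$\Ext^i_A(V,W)=0$ pour $i>\gldim A_0$'' est d'ailleurs fausse en g\'en\'eral~: prenez $A=k[x]/(x^2)$ avec $x$ en degr\'e~$1$, de sorte que $A_0=k$ et $\gldim A_0=0$. L'extension $0\to k\langle -1\rangle\to A\to k\to 0$ donne $\Ext^1_{A\mModgr}(k,k\langle -1\rangle)\neq 0$, et plus g\'en\'eralement $\Ext^i_{A\mModgr}(k,k\langle -i\rangle)\neq 0$ pour tout $i\ge 0$. Le d\'evissage par la filtration par le degr\'e ram\`ene donc bien aux $\Ext$ entre $A_0$-modules \emph{plac\'es en degr\'es diff\'erents}, et ceux-ci ne se calculent pas dans $D^b(A_0\mMod)$.

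La preuve de l'article proc\`ede autrement pour (i)$\Rightarrow$(iii)~: elle montre que si $\gldim A_0=d$, alors pour tout $A_0$-module $V$ (en degr\'e~$0$) la syzygie $\Omega_A^{d+1}V$ \emph{sur $A$} est engendr\'ee en degr\'es $\ge 1$. L'id\'ee est de r\'esoudre $V$ sur $A_0$ par un complexe de projectifs de longueur $\le d$, puis de r\'esoudre chaque $A_0$-projectif sur $A$ (ce qui pousse les degr\'es). En it\'erant, $\Omega_A^{k(d+1)}V$ est engendr\'e en degr\'es $\ge k$, d'o\`u $\Ext^i_{A\mModgr}(S,T\langle n\rangle)=0$ pour $i\ge (d+1)(1-n)$ et la borne $(d+1)(1+b-a)$ annonc\'ee. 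C'est ce m\'ecanisme de ``d\'ecalage progressif du degr\'e par les syzygies sur $A$'' qui manque \`a votre argument et qui explique la forme de la borne (laquelle, contrairement \`a ce que vous sugg\'erez, n'est pas am\'eliorable en $\gldim A_0$).
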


\begin{proof}
L'\'equivalence de (iii) et (iv) est claire.

(i) est \'equivalent \`a la propri\'et\'e que les $A_0$-modules simples
sont quasi-isomorphes \`a des complexes born\'es de $A_0$-module projectifs.
Par cons\'equent, (i) implique (ii).

Supposons maintenant (ii). Soit $V$ un $A_0$-module. Alors, $V$ est
extension it\'er\'ee de complexes $P[i]\langle j\rangle$ o\`u
$P$ est un $A_0$-module projectif. Apr\`es
application du foncteur ``poids $0$'', on obtient $V$ comme
extension it\'er\'ee de complexes $P[i]$. Puisque
$\Ext^i_A(A_0,A_0)=\Ext^i_{A_0}(A_0,A_0)=0$ pour $i\not=0$, on en
d\'eduit que $V$ est quasi-isomorphe \`a un complexe born\'e
de $A_0$-modules projectifs. On a ainsi montr\'e (i).

\smallskip
Supposons $\Ext^i_A(V,W)=0$ pour $i\ge n$ et pour tous
$A_0$-modules simples $V$ et $W$. Alors, $A_0$ est de dimension
globale inf\'erieure \`a $n$, puisque
$\Ext^i_{A_0}(V,W)=\Ext^i_A(V,W)$. En particulier, (iii) implique (i).

\smallskip
Supposons $A_0$ de dimension globale $d$. Nous allons
montrer que pour tout $A_0$-module $V$, le module $\Omega_A^{d+1} V$ est
engendr\'e en degr\'es $\ge 1$.

Le $A$-module $V$ est quasi-isomorphe \`a un complexe born\'e $C$ de
$A_0$-modules projectifs avec $C^i=0$ pour $i<-d$ et $i>0$.
Chaque $A$-module $C^i$ admet une r\'esolution projective dont tous
les termes, sauf le premier, sont engendr\'es en degr\'es $\ge 1$.
On en  d\'eduit que $V$ a une r\'esolution projective o\`u tous les
termes sont engendr\'es en degr\'es $\ge 1$, sauf les $d+1$ premiers.
Par cons\'equent, $\Omega_A^{d+1} V$ est engendr\'e en degr\'es $\ge 1$.

On en d\'eduit que si $S$ et $T$ sont deux $A_0$-modules simples, alors
$\Ext^i_A(S,T\langle n\rangle)=0$ pour $i\ge (d+1)(1-n)$.
On obtient alors que (i) implique (iii) et on d\'eduit aussi
la derni\`ere partie de la proposition.
\end{proof}

\section{Equivalence stables gradu\'ees}

\subsection{Invariances}

\subsubsection{Graduation}
\label{graduation}

Soit $A$ une $k$-alg\`ebre auto-injective gradu\'ee. Soit $B$
une $k$-alg\`ebre auto-injective.

\begin{thm}
\label{th:gradstable}
Soient $L$ un
$(A,B)$-bimodule et $L'$ un $(B,A)$-bimodule induisant des \'equivalences
stables (non gradu\'ees) inverses entre $A$ et $B$ (cf
\S \ref{secinvariancestable}).

Alors, il existe une graduation sur $B$ et des structures de
bimodules gradu\'es sur $L$ et $L'$. 
En particulier, $L$ et $L'$ induisent des \'equivalences inverses entre
les cat\'egories $B\mstabgr$ et $A\mstabgr$.
\end{thm}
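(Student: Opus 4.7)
Le plan consiste à transporter la graduation de $A$ vers $B$ grâce à l'isomorphisme $\Out^0(A)\iso\Out^0(B)$ fourni par le théorème \ref{Out0stable}, puis à montrer que $L$ et $L'$ se graduent de manière compatible. D'abord, la graduation sur $A$ correspond à un morphisme $\pi_A:\BG_m\to\Aut(A)$ (cf \S\ref{dictionnaire}). En le composant avec $\Aut(A)\to\Out^0(A)\iso\Out^0(B)$ (où le second morphisme provient du théorème \ref{Out0stable}), on obtient $\bar\pi_B:\BG_m\to\Out^0(B)$, qu'on relève en $\pi_B:\BG_m\to\Aut(B)$ grâce au premier lemme de \S\ref{se:chgtgrad}. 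Ceci définit la graduation souhaitée sur $B$.

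Pour munir $L$ d'une structure de bimodule gradué, on exploite la construction de l'isomorphisme $\Out^0(A)\iso\Out^0(B)$ dans la preuve du lemme \ref{lemmestable}: par cette construction, la classe stable de $L'\otimes_A A_{\pi_A(g)}\otimes_A L$ est égale à celle de $B_{\pi_B(g)}$ pour tout $g\in\BG_m$. En utilisant l'isomorphisme stable $L\otimes_B L'\simeq A$, ceci se traduit en un isomorphisme stable de $(A,B)$-bimodules $A_{\pi_A(g)}\otimes_A L\simeq L\otimes_B B_{\pi_B(g)}$: la classe stable de $L$, vue comme $(A\otimes B^\circ)$-module, est fixe sous l'action diagonale de $\BG_m$ provenant de $\pi_A$ et $\pi_B$. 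Une version stable de la proposition \ref{graduable} permet alors de relever cette fixité en une structure effective de bimodule gradué sur $L$, quitte à ajouter un facteur direct projectif gradué (sans effet sur la classe stable). Le même argument s'applique à $L'$, et les bimodules gradués ainsi construits fournissent les équivalences entre $B\mstabgr$ et $A\mstabgr$ recherchées.

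La principale difficulté résidera précisément dans ce passage du stable au strict: la proposition \ref{graduable} requiert un isomorphisme $A_g\otimes_A M\simeq M$ strictement, alors qu'on n'en dispose ici que modulo bimodules projectifs. Il faudra adapter l'argument cohomologique de la preuve de la proposition \ref{graduable} --- fondé sur la scission d'une extension du tore $\BG_m$ par un groupe unipotent --- à la catégorie stable des $(A\otimes B^\circ)$-bimodules, en vérifiant que les facteurs projectifs qui apparaissent (issus du facteur projectif dans $L\otimes_B L'\simeq A\oplus\text{projectif}$) admettent des graduations compatibles. Ceci repose sur la structure canoniquement graduée des bimodules projectifs sur une algèbre graduée, et sur l'unicité à caractère près du relèvement fournie par la proposition \ref{graduable} appliquée aux facteurs indécomposables de la partie projective.
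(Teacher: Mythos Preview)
Ta stratégie est correcte dans les grandes lignes et coïncide avec celle du papier~: transporter la graduation via l'isomorphisme $\Out^0(A)\iso\Out^0(B)$ du théorème~\ref{Out0stable}, puis invoquer la proposition~\ref{graduable} pour graduer $L$. Mais tu te compliques inutilement la vie dans la seconde moitié, et la ``difficulté principale'' que tu identifies dans ton dernier paragraphe n'en est pas une si l'on commence par la bonne réduction.

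Le papier remarque d'abord qu'il suffit de traiter le cas où $L$ et $L'$ sont \emph{indécomposables}, quitte à remplacer $A$ et $B$ par des blocs. Une fois cette réduction faite, ton isomorphisme \emph{stable} $A_{\pi_A(g)}\otimes_A L\simeq L\otimes_B B_{\pi_B(g)}$ devient automatiquement un isomorphisme \emph{strict}~: les deux membres sont des $(A\otimes B^\circ)$-modules indécomposables non projectifs (la torsion par un bimodule inversible préserve l'indécomposabilité, et $L$ n'est pas projectif sinon il ne pourrait induire d'équivalence stable non triviale), et sur une algèbre auto-injective, deux modules indécomposables non projectifs stablement isomorphes sont isomorphes (Krull--Schmidt). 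La proposition~\ref{graduable} s'applique alors telle quelle, sans aucune adaptation ``stable''. Toute la discussion de ton troisième paragraphe --- adapter l'argument cohomologique, graduer les facteurs projectifs parasites --- devient superflue.

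En résumé~: ajoute la réduction à $L$ indécomposable en première ligne, et supprime le dernier paragraphe.
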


\begin{proof}
Il suffit de traiter le cas o\`u $L$ et $L'$ sont ind\'ecomposables, quitte
\`a remplacer $A$ et $B$ par des blocs.

Soit $\sigma:\BG_m\to\Aut(A)$ induisant la graduation sur $A$.
Soit $\tau$ un morphisme $\BG_m\to\Aut(B)$ dont l'image dans
$\Hom(\BG_m,\Out^0(B))$
correspond \`a celle de $\sigma$ dans $\Hom(\BG_m,\Out^0(A))$
via l'isomorphisme $\Out^0(A)\iso\Out^0(B)$ induit par $L$ 
(th\'eor\`eme \ref{Out0stable}).
Cela munit $B$ d'une structure gradu\'ee.

Pour $x\in k^\times$, on a 
$(A_{\sigma(x)})\otimes_A L\otimes_B B_{\tau(x)}\simeq L$.
Par cons\'equent, le $(A\otimes B^\circ)$-module $L$ est graduable
(proposition \ref{graduable}).
Ainsi, on a obtenu une
structure d'alg\`ebre gradu\'ee sur $B$ et une structure gradu\'ee sur
le $(A,B)$-bimodule $L$. On proc\`ede de m\^eme pour $L'$.
\end{proof}

\begin{rem}
Rappelons la construction en terme de bimodules.
Soit $X=A[t,t^{-1}]$, vu comme $A^\en[t,t^{-1}]$-module par
multiplication de $A[t,t^{-1}]$ \`a gauche et o\`u l'action de $a\in A_i$
\`a droite est la multiplication par $t^i a$.
Soit $Y=L'\otimes_A X\otimes_A L$. C'est un $B^\en[t,t^{-1}]$-module.
La preuve du lemme \ref{lemmestable} montre
l'existence de $B^\en[t,t^{-1}]$-modules
$Z$ et $P$ tels que $Z$ est localement libre de rang $1$
comme $B[t,t^{-1}]$-module et comme $B^\circ[t,t^{-1}]$-module et
$P$ est localement projectif comme $B^\en[t,t^{-1}]$-module, avec
$Y=P\oplus Z$. Alors, $Z$ est isomorphe au
$B^\en[t,t^{-1}]$-module associ\'e \`a $\tau$.
\end{rem}

On d\'emontre de la m\^eme mani\`ere, en utilisant le th\'eor\`eme
\ref{invarderiv}, le r\'esultat suivant:

\begin{thm}
Soient $A$ et $B$ deux $k$-alg\`ebres de dimension finie. On suppose $A$
munie d'une graduation.
Soient $L\in D^b(A\otimes B^\circ)$ et $L'\in D^b(B\otimes A^\circ)$
induisant des \'equivalences d\'eriv\'ees
(non gradu\'ees) inverses entre $A$ et $B$ (cf
\S \ref{secequivderiv}).

Alors, il existe une graduation sur $B$ et des structures de
complexes de bimodules gradu\'es sur $L$ et $L'$. 
En particulier, $L$ et $L'$ induisent des \'equivalences inverses entre
les cat\'egories $D^b(B\mmodgr)$ et $D^b(A\mmodgr)$.
\end{thm}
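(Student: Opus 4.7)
The plan is to mirror the proof of Theorem \ref{th:gradstable} (the stable analogue), substituting Theorem \ref{invarderiv} for Theorem \ref{Out0stable} at the invariance step, and establishing a derived version of Proposition \ref{graduable}. By decomposing $A$ and $B$ into blocks, I first reduce to the case where $L$ and $L'$ are indecomposable in $D^b(A\otimes B^\circ)$ and $D^b(B\otimes A^\circ)$ respectively; this uses Krull-Schmidt in the bounded derived category of a finite-dimensional algebra, whose endomorphism rings are finite-dimensional and hence semi-perfect.

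Next, let $\sigma:\BG_m\to\Aut(A)$ be the cocharacter defining the graduation on $A$ and $\bar\sigma:\BG_m\to\Out^0(A)$ its projection. Theorem \ref{invarderiv} provides an isomorphism of algebraic groups $\Out^0(A)\iso\Out^0(B)$; let $\bar\tau:\BG_m\to\Out^0(B)$ be the corresponding cocharacter and lift it to $\tau:\BG_m\to\Aut(B)$ by the first lemma of \S \ref{se:chgtgrad}, thereby equipping $B$ with a graduation. Since the isomorphism $\Out^0(A)\iso\Out^0(B)$ is implemented by conjugation with $L$ in the derived Picard group (see Lemma \ref{lemmederive} and its proof), for each $x\in k^\times$ one obtains an isomorphism $A_{\sigma(x)}\otimes_A^\BL L\otimes_B^\BL B_{\tau(x)}\simeq L$ in $D^b(A\otimes B^\circ)$, and an analogous one for $L'$.

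The crux is a derived analogue of Proposition \ref{graduable}: if $M$ is an indecomposable object of $D^b(A\otimes B^\circ)$ satisfying $A_{\sigma(g)}\otimes_A^\BL M\otimes_B^\BL B_{\tau(g)}\simeq M$ for every $g\in\BG_m$, then $M$ extends to a graded complex. I would reproduce the diagram chase of Proposition \ref{graduable}: form the affine group scheme of finite type $\tilde G$ of pairs $(g,f)$ with $g\in\BG_m$ and $f$ an isomorphism $A_{\sigma(g)}\otimes_A^\BL M\otimes_B^\BL B_{\tau(g)}\iso M$. Its existence as an algebraic group uses the finite-dimensionality of $\End_{D^b(A\otimes B^\circ)}(M)$. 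The projection $\tilde G\to\BG_m$ is surjective by hypothesis, and its kernel $\End_{D^b(A\otimes B^\circ)}(M)^\times$ is an extension of a copy of $\BG_m$ (scalars) by a unipotent group, because indecomposability of $M$ forces $\End_{D^b(A\otimes B^\circ)}(M)$ to be local. The same extension-splitting argument as in Proposition \ref{graduable} then produces a splitting $\BG_m\to\tilde G$, which is the sought graded lift of $M$.

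Applying this to $L$ and $L'$ yields graded bimodule structures. The composition isomorphisms $L\otimes_B^\BL L'\simeq A$ and $L'\otimes_A^\BL L\simeq B$ become graded after absorbing a global degree shift into $\tau$ (via a character of $\BG_m$), and the asserted equivalence between $D^b(A\mmodgr)$ and $D^b(B\mmodgr)$ follows formally, since the graded tensor product with a graded complex preserves the graded derived category. The hardest part is the derived graduable result itself; the delicate points are that $\tilde G$ is genuinely algebraic of finite type and that indecomposables in $D^b(A\otimes B^\circ)$ have local endomorphism rings, but both rest on the standard finiteness of $\Hom$ spaces and Krull-Schmidt in $D^b$ over finite-dimensional algebras.
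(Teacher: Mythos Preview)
Your proposal is correct and follows essentially the same route as the paper, which simply says ``on d\'emontre de la m\^eme mani\`ere, en utilisant le th\'eor\`eme \ref{invarderiv}'' and leaves the adaptation of the proof of Theorem \ref{th:gradstable} to the reader; you have carried out that adaptation explicitly, correctly isolating the one nontrivial point, namely the derived analogue of Proposition \ref{graduable}.

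One small refinement worth making precise: when you build $\tilde G$, work at the chain level rather than in $D^b$. Fix once and for all a minimal bounded complex of projective $(A\otimes B^\circ)$-modules representing $L$; twisting by $(\sigma(g),\tau(g))$ then gives the \emph{same} underlying graded $k$-vector space with a twisted action, and $\tilde G$ is the closed subscheme of $\BG_m\times\prod_i\GL(L^i)$ of pairs $(g,f)$ with $f$ a chain map intertwining the two actions. The kernel of $\tilde G\to\BG_m$ is then $\End_{\mathrm{Ch}(A\otimes B^\circ)}(L)^\times$, not $\End_{D^b}(L)^\times$; but for a minimal complex the null-homotopic endomorphisms lie in the radical (each $d^{i-1}h^i+h^{i+1}d^i$ has image in $\rad L^i$), so this endomorphism ring is still local and your unipotent-by-$\BG_m$ splitting argument goes through unchanged.
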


\subsubsection{Longueur}
Soient $A$ et $B$ deux $k$-alg\`ebres auto-injectives gradu\'ees
ind\'ecomposables et non simples. Soit $L\in (A\otimes B^\circ)\mmodgr$
induisant une \'equivalence stable.

\smallskip
Les longueurs des graduations
(cf \S \ref{dualite}) des deux alg\`ebres co\"{\i}ncident~:

\begin{lemme}
\label{invdual}
On a $n_A=n_B$.
\end{lemme}

\begin{proof}
Les bimodules gradu\'es $\Homgr_A(L,A)$ et
$\Homgr_B(L,B)$ sont isomorphes dans $(B\otimes A^\circ)\mstabgr$ (unicit\'e
\`a isomorphisme pr\`es d'un adjoint de $L\otimes_B-$).
Ces derniers sont respectivement isomorphes \`a
$L^*\langle -n_A\rangle$ et $L^*\langle -n_B\rangle$ (lemme \ref{dual}).
\end{proof}

\smallskip

\begin{lemme}
\label{invCartan}
On a $\det C_A\in \pm q^\BZ\cdot\det C_B$.
\end{lemme}

\begin{proof}
Cela r\'esulte de la commutativit\'e du diagramme de groupes de
Grothendieck (en fait de $\BZ[q,q^{-1}]$-modules)
$$\xymatrix{
K_0(A\mprojgr) \ar[r]^{C_A} \ar[d] & \ar[d] \ar[r] K_0(A\mmodgr) &
 \coker C_A \ar[d]^\sim\ar[r] & 0\\
K_0(B\mprojgr) \ar[r]_{C_B} & K_0(B\mmodgr) \ar[r] & \coker C_B \ar[r] & 0\\
}$$
o\`u les fl\`eches verticales sont induites par $L\otimes_A -$.
\end{proof}

\subsubsection{Nombre de modules simples}

Nous en arrivons maintenant au point crucial de cette \'etude num\'erique,
qui montre la pertinence de la prise en compte des graduations.

Si $A_0$ est de dimension globale finie, alors $B_0$ est aussi de
dimension globale finie (proposition \ref{dimproj}).

La proposition suivante fournit une r\'eponse positive \`a la conjecture
d'Alperin-Auslander dans le cadre d'alg\`ebres positivement gradu\'ees.

\begin{prop}
\label{invr}
Soient $A$ et $B$ deux $k$-alg\`ebres auto-injectives gradu\'ees
ind\'ecomposables. On suppose qu'il existe une \'equivalence stable
entre ces alg\`ebres induite par un bimodule gradu\'e.

Si $A_0$ est de dimension globale finie et si $A$ et $B$ sont
concentr\'ees en degr\'es positifs, alors $A$ et $B$ ont
le m\^eme nombre de modules simples.
\end{prop}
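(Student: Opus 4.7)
The plan is to use the graded Cartan determinant as a fine-grained numerical invariant, combining the comparison furnished by lemmas \ref{invdual} and \ref{invCartan} with the structural information of proposition \ref{Cartan}. The idea is that when $A$ is concentrated in positive degrees with $A_0$ of finite global dimension, the polynomial $\det C_A(q)\in\BZ[q]$ acquires both a nonzero constant term (equal to $\det C_{A_0}$) and a nonzero leading term in degree $n_A r_A$, so that comparing such polynomials for $A$ and $B$ up to multiplication by $\pm q^k$ will pin down $r_A=r_B$.

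The first step is to transfer the finite-global-dimension hypothesis from $A_0$ to $B_0$. Since the stable equivalence is induced by a graded bimodule, it yields an equivalence $A\mstabgr\iso B\mstabgr$; the characterisation (iv) of proposition \ref{dimproj}, namely vanishing of $\Ext^i$ in the graded stable category for $i$ large, is manifestly invariant under such an equivalence and hence transfers. The second step invokes the classical fact that the Cartan matrix of a finite-dimensional algebra of finite global dimension has determinant $\pm 1$: express each simple in $K_0$ as an alternating sum of the terms of a finite projective resolution, and observe that the resulting integer matrix is a two-sided inverse of the Cartan matrix. Hence $c_A:=\det C_{A_0}=\pm 1$ and likewise $c_B=\pm 1$. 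Applying proposition \ref{Cartan} to both algebras then gives
$$\det C_A = c_A + a_1 q + \cdots + \epsilon(\nu_A)\, c_A\, q^{n_A r_A}, \qquad \det C_B = c_B + b_1 q + \cdots + \epsilon(\nu_B)\, c_B\, q^{n_B r_B}.$$

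The final step exploits lemma \ref{invCartan}, giving $\det C_A=\pm q^k\det C_B$ for some $k\in\BZ$, together with lemma \ref{invdual}, giving $n_A=n_B$. Since both polynomials have nonzero constant term, equating lowest-degree monomials forces $k=0$; equating leading degrees then yields $n_A r_A = n_B r_B$, whence $r_A=r_B$ provided $n_A\neq 0$. The degenerate case $n_A=0$ is handled separately: then $A=A_0$ is simultaneously auto-injective and of finite global dimension, hence semisimple, and indecomposability forces $r_A=1$; the stable equivalence forces $B\mstab$ to be trivial, so $B$ is also semisimple and indecomposable, giving $r_B=1$.

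The main obstacle — really the pivotal input — is securing both a nonzero constant term and a nonzero leading term in $\det C_A(q)$, so that the comparison with $\pm q^k\det C_B$ in $\BZ[q,q^{-1}]$ is tight enough to pin $k$ down to $0$. This is exactly what the finite-global-dimension hypothesis contributes: first via $\det C_{A_0}=\pm 1$, and then via its transfer to $B_0$. Without this input the comparison collapses (indeed the Alperin--Auslander conjecture is open in the ungraded setting), and it is the positivity of the grading that lets the leading coefficient inherit $c_A$ through proposition \ref{Cartan}.
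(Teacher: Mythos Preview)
Your proof is correct and follows essentially the same route as the paper: invoke lemma \ref{invdual} for $n_A=n_B$, use finite global dimension of $A_0$ (transferred to $B_0$ via proposition \ref{dimproj}) to get $\det C_{A_0},\det C_{B_0}\in\{\pm1\}$, then combine proposition \ref{Cartan} with lemma \ref{invCartan} to force $n_Ar_A=n_Br_B$. Your treatment is slightly more explicit than the paper's in two places --- you spell out why the exponent $k$ in $\det C_A=\pm q^k\det C_B$ must vanish, and you handle the degenerate case $n_A=0$ separately --- whereas the paper dispatches the latter in one line by observing that a self-injective algebra of finite global dimension is semisimple, so $B\neq B_0$.
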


\begin{proof}
On sait que $n_A=n_B$ d'apr\`es le lemme \ref{invdual}.
Puisque $A_0$ et $B_0$ sont de dimension globale finie, leurs matrices
de Cartan sont inversibles sur $\BZ$.

D'apr\`es le lemme \ref{invCartan} et en utilisant la description du
d\'eterminant de la matrice de Cartan gradu\'ee donn\'ee par la proposition
\ref{Cartan}, on obtient $r_An_A=r_Bn_B$. Enfin, $n_B\not=0$, car $B\not=B_0$
puisque $B_0$ est de dimension globale finie (une alg\`ebre
sym\'etrique de dimension globale finie est semi-simple).
\end{proof}

\begin{rem}
Le probl\`eme de la positivit\'e d'une alg\`ebre stablement \'equivalente
\`a une alg\`ebre positivement gradu\'ee est d\'elicat. Il existe une
graduation sur l'alg\`ebre $\bar{\BF}_2 \GA_4$ et une
\'equivalence stable (et m\^eme d\'eriv\'ee) avec le bloc principal
$A$ de $\bar{\BF}_2\GA_5$ telles qu'il n'existe pas de graduation positive
sur $A$ compatible avec l'\'equivalence.
\end{rem}

Etant donn\'ee une alg\`ebre auto-injective $A$ gradu\'ee en degr\'es positifs,
dans quels cas peut-on reconstruire $A$ \`a partir de $A_0$? Il serait aussi
int\'eressant d'\'etudier le cas o\`u l'alg\`ebre diff\'erentielle
gradu\'ee $R\End^\bullet_A(A/A_{>0},A/A_{>0})$ est formelle.

\subsubsection{Rel\`evements d'\'equivalences stables}
\begin{defi}
Soit $A$ une $k$-alg\`ebre sym\'etrique ind\'ecomposable.
On dit que $A$ admet des
rel\`evements d'\'equivalences stables si toute \'equivalence stable de
type de Morita entre $A$ et une alg\`ebre sym\'etrique ind\'ecomposable
$B$ se rel\`eve en une \'equivalence d\'eriv\'ee. 
\end{defi}

\begin{rem}
Notons qu'on peut aussi demander la propri\'et\'e plus faible suivante: si
$B$ est une $k$-alg\`ebre sym\'etrique et s'il existe une \'equivalence
stable de type de Morita entre $A$ et $B$, alors il existe une \'equivalence
d\'eriv\'ee entre $A$ et $B$.

Notons enfin que la g\'en\'eralisation directe
au cas de corps non alg\'ebriquement clos n'est pas raisonnable: deux
extensions galoisiennes non-isomorphes du corps sont stablement \'equivalentes
mais non d\'eriv\'e-\'equivalentes.
\end{rem}

\subsection{Alg\`ebres de groupes et graduations}
\label{PE}

\subsubsection{Blocs locaux}
\label{se:blocs}
\paragraph{}
Soit $k$ un corps de caract\'eristique $p$.

Soit $P$ un $p$-groupe ab\'elien. L'alg\`ebre $kP$
est isomorphe \`a l'alg\`ebre gradu\'ee associ\'ee \`a la filtration
par le radical de $kP$~: $\bigoplus_i J^ikP/J^{i+1}kP$.

On obtient un isomorphisme en choisissant des g\'en\'erateurs
$\sigma_1,\ldots,\sigma_n$ de $P$ d'ordres $d_1,\ldots,d_n$ tels que
$P=\prod_i \langle \sigma_i\rangle$.
Alors, 
$$kP\iso k[x_1,\ldots,x_r]/(x_i^{d_i})\iso \bigoplus_i J^ikP/J^{i+1}kP$$
$$\sigma_i-1\mapsto x_i\mapsto \overline{\sigma_i-1}$$
o\`u $\overline{\sigma_i-1}$ est l'image de $\sigma_i-1\in JkP$
dans $\bigoplus_i J^ikP/J^{i+1}kP$.

\smallskip

Soit $E$ un $p'$-sous-groupe d'automorphismes de $P$, $\hat{E}$
une extension centrale de $E$ par $k^\times$ et $A=k_*P\rtimes \hat{E}$
l'alg\`ebre de groupe de $P\rtimes E$ tordue par l'extension centrale.
Alors, $A$ est isomorphe \`a $\bigoplus_i J^iA/J^{i+1}A$.

Un tel isomorphisme fournit une graduation sur $A$. On a $A_0=k_*\hat{E}$.

\medskip
\paragraph{}
Reprenons la construction plus explicitement.

Soit $P$ un $p$-groupe ab\'elien homocyclique d'exposant $p^r$.

Choisissons un sous-espace $V$ de $J(kP)$ tel que 
$J(kP)=V\oplus J(kP)^2$. Alors, $kP$ est engendr\'ee, comme
$k$-alg\`ebre, par $1$ et $V$. Le noyau du morphisme canonique
$S(V)\to kP$ est $V^{p^r}$.
On d\'efinit donc une graduation de $kP$ en prenant $k\cdot 1$
en degr\'e $0$ et $V$ en degr\'e $1$.

\medskip
\paragraph{}
Soit $E$ un $p'$-groupe agissant sur $P$.
On a une suite exacte scind\'ee de $kE$-modules
$$0\to J(kP)^2\to J(kP)\to J(kP)/J(kP)^2\to 0.$$
Soit $V$ un sous-$kE$-module de $J(kP)$ tel que
$J(kP)=V\oplus J(kP)^2$.
Alors, on munit $kP$ de la graduation
o\`u $V$ est en degr\'e $1$ et $k\cdot 1$ en degr\'e $0$~:
c'est une graduation $E$-invariante.

Donnons-nous en outre une $k^\times$-extension centrale $\hat{E}$ de $E$.
Consid\'erons maintenant $A=k_*(P\rtimes \hat{E})$. Alors, on munit $A$ de la
graduation telle que $k_*\hat{E}$ est en degr\'e $0$ et $V$ en degr\'e $1$.

\medskip
\paragraph{}
Prenons maintenant $P$ un $p$-groupe ab\'elien quelconque et $E$
un $p'$-groupe agissant sur $P$. Puisque les $\BZ_pE$-modules
ind\'ecomposables de torsion sont homocycliques, il existe une
d\'ecomposition $E$-stable $P=\prod_i P_i$, o\`u $P_i$ est
homocyclique d'exposant $p^i$.

Pour chaque $i$, on fixe $V_i$ un sous-$kE$-module de $J(kP_i)$ tel que
$J(kP_i)=V_i\oplus J(kP_i)^2$. Soit $V=\oplus_i V_i$.
On munit alors $kP$ de la graduation o\`u $V$ est en degr\'e $1$.
Comme plus haut, \'etant donn\'ee une $k^\times$-extension centrale
$\hat{E}$ de $E$, on munit $k_*(P\rtimes \hat{E})$ de la
graduation telle que $k_*\hat{E}$ est en degr\'e $0$ et $V$ en degr\'e $1$.

\subsubsection{Application}
Soit $G$ un groupe fini, $k$ un corps alg\'ebriquement clos de
caract\'eristique $p$, $A$ un bloc de $kG$ de d\'efaut $D$. Soit
$B$ le bloc correspondant de $N_G(D)$. D'apr\`es \cite{Ku}, l'alg\`ebre
$B$
est Morita-\'equivalente \`a une alg\`ebre de groupe tordue
$k_* D\rtimes\hat{E}$, o\`u $E=N_G(D,b_D)/C_G(D)$, $\hat{E}$
est une extension centrale de $E$ par $k^\times$ et $(D,b_D)$
est une $A$-sous-paire maximale.

\begin{thm}
\label{th:grblocs}
Si $D$ est cyclique, ab\'elien \'el\'ementaire de rang $2$ ou
si $D$ est ab\'elien d'ordre $8$,
alors il existe une graduation sur $A$ et une \'equivalence stable
gradu\'ee entre $A$ et $B$.
\end{thm}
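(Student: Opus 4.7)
Le plan est de ramener le théorème à une application du théorème \ref{th:gradstable}, qui transporte une graduation à travers une équivalence stable. Il faut donc réunir deux ingrédients: d'une part, une équivalence stable de type Morita (non graduée) entre $A$ et $B$; d'autre part, une graduation explicite sur $B$. Le second ingrédient combiné au premier via \ref{th:gradstable} produira automatiquement la graduation sur $A$ et la structure graduée sur le bimodule d'équivalence.

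Je commencerais par munir $B$ d'une graduation en utilisant la construction de \S \ref{se:blocs}. D'après \cite{Ku}, $B$ est Morita-équivalent à $k_*D\rtimes \hat{E}$, et puisque $D$ est abélien, on peut décomposer $D=\prod_i D_i$ en produit $E$-stable de $p$-groupes homocycliques; on choisit pour chaque $i$ un sous-$kE$-module $V_i\subset J(kD_i)$ complémentaire de $J(kD_i)^2$, on pose $V=\bigoplus_i V_i$ en degré $1$, et $k_*\hat{E}$ en degré $0$. L'algèbre obtenue est graduée en degrés positifs, avec $B_0=k_*\hat{E}$ semi-simple. Les équivalences de Morita entre cette algèbre et $B$ préservent l'existence d'une graduation à équivalence de Morita graduée près, d'après le corollaire de \S \ref{se:chgtgrad}.

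Ensuite, il faut invoquer, pour chacun des trois types de défaut, l'existence d'une équivalence stable de type Morita entre $A$ et $B$. Pour $D$ cyclique, ceci est classique et remonte à Rickard (relèvement de la correspondance de Green via la théorie des arbres de Brauer-Dade). Pour $D$ abélien élémentaire de rang $2$, cela résulte de travaux de Linckelmann, Puig et Rickard (où dans plusieurs cas l'équivalence est même dérivée, mais il suffit ici de la version stable). Pour $D$ abélien d'ordre $8$, le cas cyclique est déjà traité, et les cas restants $(\BZ/2)^3$ et $\BZ/2\times\BZ/4$ ressortent d'analyses cas par cas de la conjecture du défaut abélien pour les petits $2$-groupes. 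Une fois ces équivalences stables acquises, le théorème \ref{th:gradstable} appliqué au bimodule correspondant $L$ et à la graduation construite sur $B$ fournit directement la graduation sur $A$ et la structure graduée sur $L$, d'où l'équivalence stable graduée annoncée entre $A\mstabgr$ et $B\mstabgr$.

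L'obstacle principal est donc l'existence même des équivalences stables non graduées, qui restreint le résultat à ces défauts spécifiques; les preuves correspondantes reposent sur des méthodes non triviales (structure d'arbre de Brauer pour le cas cyclique, construction d'algèbres sources et classification à la Erdmann pour les défauts élémentaires de rang $2$, argument ad hoc pour $(\BZ/2)^3$). Le transport gradué par \ref{th:gradstable} est, par comparaison, une simple conséquence formelle de l'invariance de $\Out^0$ par équivalence stable (théorème \ref{Out0stable}) et du critère de graduabilité des bimodules indécomposables sous une action de tore (proposition \ref{graduable}).
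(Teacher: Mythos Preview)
Your proposal is correct and follows essentially the same two-step structure as the paper's proof: first obtain a non-graded stable equivalence of Morita type between $A$ and $B$, then apply Theorem~\ref{th:gradstable} to transport the canonical graduation on $B$ (via the Morita equivalence with $k_*D\rtimes\hat{E}$ from \S\ref{se:blocs}) over to $A$. The only notable difference is bibliographic: where you invoke a patchwork of results (Brauer trees for the cyclic case, Linckelmann--Puig--Rickard for rank~$2$, ad~hoc arguments for order~$8$), the paper appeals uniformly to the local gluing constructions of \cite[Theorems~6.3 and~6.10]{Roucha} and \cite{Rou5}, which produce the required stable equivalences in all three cases (including non-principal blocks) by a single mechanism.
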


\begin{proof}
D'apr\`es \cite[Theorem 6.3 et Theorem 6.10]{Roucha} et \cite{Rou5}
(blocs non principaux), il existe une \'equivalence stable de type de Morita
entre $A$ et $B$. Le th\'eor\`eme r\'esulte alors du th\'eor\`eme 
\ref{th:gradstable}.
\end{proof}

Le cas des blocs \`a d\'efaut cyclique a \'et\'e \'etudi\'e en d\'etail par
D.~Bogdanic \cite{Bog}.

\medskip
La conjecture de Brou\'e \cite{Bro}
sur les blocs \`a d\'efaut ab\'elien pr\'edit que
les conclusions du th\'eor\`eme \ref{th:grblocs}
sont vraies si $D$ est ab\'elien 
(la conjecture pr\'edit plus pr\'ecis\'ement que les blocs seront
d\'eriv\'e-\'equivalents). Si cette conjecture est vraie, alors tous
les blocs \`a d\'efaut ab\'elien ont une graduation non triviale.

\begin{rem}
Puisque la conjecture de Brou\'e est prouv\'ee pour les groupes
sym\'etriques \cite{ChRou}, on obtient des graduations
sur les blocs \`a d\'efaut ab\'elien des groupes sym\'etriques. En 
transf\'erant la graduation canonique sur les ``bons blocs'', on 
obtient des graduations dont il est naturel de conjecturer que les
matrices de Cartan gradu\'ees associ\'ees sont d\'ecrites par des
polyn\^omes de Kazhdan-Lusztig paraboliques. Le transfert des graduations
provient d'op\'erateurs du type ``tresses \'el\'ementaires''. En fait,
\cite{Rou4} montre que ces graduations sont
compatibles aux foncteurs induction et restriction appropri\'es. La
somme des cat\'egories de repr\'esentations de groupes sym\'etriques sur $k$
fournit une $2$-repr\'esentation ``simple'' de $\hat{\Gsl}_p$ qui se
retrouve automatiquement munie de graduations. Celles-ci se d\'ecrivent
de mani\`ere explicite via un isomorphisme avec des alg\`ebres cyclotomiques
de Hecke de carquois \cite{BruKl,Rou4}.
\end{rem}

\begin{question}
On suppose $D$ ab\'elien.
Existe-t'il une graduation en degr\'es positifs sur $A$,
compatible avec une \'equivalence d\'eriv\'ee avec $B$, muni de sa
graduation canonique? Peut-on trouver une telle graduation telle que
$R\End^\bullet_A(A/A_{>0},A/A_{>0})$ est une alg\`ebre diff\'erentielle
gradu\'ee formelle?
\end{question}

L'existence de graduations positives est connue si $D$ est cyclique
\cite{Bog}. Elle est vraie aussi si $D\simeq (\BZ/2)^2$.

\begin{question}
\label{qu:relevabel}
Soit $P$ un $p$-groupe ab\'elien, $E$ un sous-groupe d'ordre premier \`a
$p$ du groupe d'automorphismes de $P$ et consid\'erons une extension centrale
$\hat{E}$ de $E$ par $k^\times$. Est-ce que
$k_*P\rtimes E$ satisfait les rel\`evements d'\'equivalences
stables?
\end{question}

Une r\'eponse affirmative \`a une forme plus pr\'ecise de la question
\ref{qu:relevabel}
a pour cons\'equence une r\'eponse affirmative \`a la conjecture de Brou\'e
\cite{Roucha,Rou5}.

\subsection{Extensions triviales d'alg\`ebres}

\subsubsection{}
Soit $B$ une $k$-alg\`ebre de dimension finie.
On d\'efinit l'alg\`ebre $T(B)=B\oplus B^*$ avec le produit
$$(a,f)\cdot (b,g)=(ab,ag+fb)$$
o\`u on a utilis\'e la structure de $(B,B)$-bimodule de $B^*$. 

L'alg\`ebre $T(B)$ est gradu\'ee, avec $B$ en degr\'e $0$ et $B^*$ en
degr\'e $1$.
On a une forme lin\'eaire canonique sur $T(B)$~:
$$t:T(B)\to k\langle -1\rangle,\ \ \ (a,f)\mapsto f(1)$$
qui induit une structure d'alg\`ebre sym\'etrique sur $T(B)$.

\smallskip
Une alg\`ebre gradu\'ee extension triviale se reconna\^{\i}t ais\'ement~:

\begin{prop}
\label{pr:rectrivial}
Soit $A$ une $k$-alg\`ebre sym\'etrique gradu\'ee en degr\'es $0$ et $1$
munie d'une forme sym\'etrisante $t:A\to k\langle -1\rangle$.
Alors,
$$\phi=(\id,\hat{t}_{|A_1}):A=A_0\oplus A_1\to T(A_0)=A_0\oplus A_0^*$$
est un isomorphisme d'alg\`ebres gradu\'ees.
\end{prop}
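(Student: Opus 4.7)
Le plan est d'exploiter l'isomorphisme de bimodules $\hat{t}:A\iso A^*$ fourni par le caractère symétrique, et d'observer que la graduation force une décomposition compatible avec celle de $T(A_0)$. Le calcul du produit dans $T(A_0)$ se ramène alors à la compatibilité de $\hat{t}$ aux actions à gauche et à droite.

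Première étape : analyse des composantes graduées de $\hat{t}$. Puisque $t:A\to k\langle -1\rangle$ est de degré $0$ en tant qu'application graduée, $t$ s'annule sur $A_0$ et restreint en une forme linéaire $A_1\to k$. Il en résulte que pour $a\in A_0$ on a $\hat{t}(a)(a')=t(a'a)$, qui est nul si $a'\in A_0$, donc $\hat{t}(A_0)\subseteq A_1^*\subseteq A^*$ ; de même, $\hat{t}(A_1)\subseteq A_0^*$, car si $a_1\in A_1$ alors $a'a_1\in A_2=0$ pour $a'\in A_1$. L'isomorphisme de $k$-espaces vectoriels $\hat{t}:A_0\oplus A_1\iso A_1^*\oplus A_0^*$ étant diagonal par blocs, chacune des composantes $\hat{t}_{|A_0}:A_0\iso A_1^*$ et $\hat{t}_{|A_1}:A_1\iso A_0^*$ est un isomorphisme. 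Ceci montre déjà que $\phi$ est un isomorphisme de $k$-espaces vectoriels gradués.

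Deuxième étape : vérification que $\phi$ est un morphisme d'algèbres. On décompose $a=a_0+a_1$ et $b=b_0+b_1$ avec $a_i,b_i\in A_i$, et on vérifie l'égalité $\phi(ab)=\phi(a)\phi(b)$ sur chacune des composantes graduées. La partie de degré $0$ est triviale, car elle se réduit à $a_0b_0$. Pour la partie de degré $1$, on doit vérifier
$$\hat{t}(a_0b_1+a_1b_0)=a_0\cdot\hat{t}(b_1)+\hat{t}(a_1)\cdot b_0,$$
ce qui découle directement du fait que $\hat{t}$ est un morphisme de $(A,A)$-bimodules, une fois remarqué que l'action à gauche (resp. à droite) de $A_0$ sur $\hat{t}(A_1)\subseteq A_0^*$ se réduit à l'action standard de $A_0$ sur son dual.

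Troisième étape, point clé : la compatibilité avec la multiplication $A_1\cdot A_1$. Comme $A$ est concentrée en degrés $0$ et $1$, on a $A_1\cdot A_1\subseteq A_2=0$, donc $\phi(a_1b_1)=0$. Du côté de $T(A_0)$, le produit $(0,\hat{t}(a_1))(0,\hat{t}(b_1))$ est également nul, par définition même de l'extension triviale (où $A_0^*\cdot A_0^*=0$). Enfin, $\phi(1)=(1,0)$ qui est l'unité de $T(A_0)$. Aucune des étapes n'est particulièrement délicate, l'unique subtilité étant la détermination précise des composantes de $\hat{t}$ par argument de degré ; les autres vérifications sont des manipulations directes d'actions bimodule.
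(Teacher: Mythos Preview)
Your proof is correct and follows essentially the same approach as the paper's own proof. The paper's argument is terser and checks only the mixed-degree case $a\in A_0$, $b\in A_1$ (the bimodule compatibility $\hat t(ab)=a\hat t(b)$ and $\hat t(ba)=\hat t(b)a$), leaving the bijectivity and the other graded pieces implicit; your version spells out all three graded cases and the isomorphism of underlying vector spaces, which is more complete but not a different strategy.
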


\begin{proof}
Soit $a\in A_0$ et $b\in A_1$. Alors, 
$\phi(ba)=(1,\hat{t}(ba))=(1,\hat{t}(b)a)=(1,\hat{t}(b))(a,1)=\phi(b)\phi(a)$ et
$\phi(ab)=(1,\hat{t}(ab))=(1,a\hat{t}(b))=\phi(a)\phi(b)$.
\end{proof}

\subsubsection{}
Le th\'eor\`eme suivant \'etend la proposition \ref{pr:A0ff}. Il
est d\^u \`a Happel \cite[Theorem 10.10]{Ha} lorsque
$A=T(A_0)$ et $A_0$ est de dimension globale finie.

\begin{thm}
\label{th:Happel}
Soit $A$ une $k$-alg\`ebre gradu\'ee auto-injective avec $A_{<0}=0$. Si
$\soc A\subset A_{>0}$, alors le foncteur
compos\'e
$$D^b(A_0\mMod)\xrightarrow{A/A_{>0}\otimes_{A_0}^\BL-}
D^b(A\mModgr)\xrightarrow{\can} A\mstabgr$$
est pleinement fid\`ele.

Si en outre $A=A_0\oplus A_1$ et $A_0$ est de dimension globale
finie, alors le foncteur $D^b(A_0\mMod)\to A\mstabgr$ est une \'equivalence.
\end{thm}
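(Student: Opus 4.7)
Ma d\'emarche d\'emontre les deux assertions en s\'erie. Pour la pleine fid\'elit\'e, je pars de la proposition \ref{pr:A0ff} (le premier facteur est d\'ej\`a pleinement fid\`ele \`a valeurs dans $D^b(A\mmodgr)$); puisque $A\mstabgr$ est le quotient de Verdier de $D^b(A\mmodgr)$ par la sous-cat\'egorie \'epaisse des complexes parfaits, il reste \`a v\'erifier qu'aucune fl\`eche non nulle entre objets de l'image ne se factorise par un complexe parfait, autrement dit que $R\Hom_{A\mmodgr}(F(M^\bullet),C^\bullet)=0$ pour $M^\bullet\in D^b(A_0\mMod)$ et $C^\bullet$ parfait. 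Par troncations b\^etes successives on se ram\`ene au cas $C^\bullet=P_V\langle -d\rangle$, un projectif gradu\'e ind\'ecomposable. L'hypoth\`ese $\soc A\subset A_{>0}$ force $\Hom_{A\mmodgr}(M,P_V\langle -d\rangle)=0$ pour tout $M$ concentr\'e en un seul degr\'e~: l'image d'un tel morphisme serait un sous-$A$-module non nul concentr\'e en un seul degr\'e, dont le socle (sous-module non nul) devrait \^etre contenu dans $\soc A\subset A_{>0}$, en contradiction avec cette concentration. D'autre part, $A$ \'etant auto-injective, chaque $P_V\langle -d\rangle$ est aussi injectif dans $A\mmodgr$~: la composante homog\`ene de degr\'e $0$ d'une r\'etraction non gradu\'ee d'une suite exacte courte de modules gradu\'es est une r\'etraction gradu\'ee. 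Donc $\Ext^j_{A\mmodgr}(-,P_V\langle -d\rangle)=0$ pour $j>0$, et l'on obtient $R\Hom=0$.

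Pour l'essentielle surjectivit\'e sous l'hypoth\`ese suppl\'ementaire, soit $\CE$ l'image essentielle de $F$ dans $A\mstabgr$. D'apr\`es la proposition \ref{dimproj}, $A_0$ engendre $A\mstabgr$ comme sous-cat\'egorie \'epaisse close par $\langle -\rangle$, donc il suffit de v\'erifier $M\langle -d\rangle\in\CE$ pour tout $A_0$-module $M$ et tout $d\in\BZ$. Le point clef est la suite exacte courte dans $A\mmodgr$, disponible gr\^ace \`a $A=A_0\oplus A_1$~:
$$0\to (A_1 e)\langle -d-1\rangle \to (Ae)\langle -d\rangle \to (A_0 e)\langle -d\rangle\to 0$$
(avec $e$ idempotent de $A_0$; le sous-module, en degr\'e maximal, est annul\'e par $A_1$ car $A_1 A_1\subset A_2=0$). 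Le terme central \'etant projectif, ceci donne dans $A\mstabgr$ l'isomorphisme $(A_1 e)\langle -d-1\rangle\simeq (A_0 e)\langle -d\rangle[-1]$. Partant de $(A_0 e)\langle 0\rangle\in\CE$ et utilisant le fait que $A_1$ est un cog\'en\'erateur injectif de $A_0\mMod$ (cons\'equence de l'auto-injectivit\'e gradu\'ee, cf proposition \ref{dual}), on obtient par r\'ecurrence sur $d\ge 0$ que tout $A_0$-module injectif plac\'e en degr\'e $d+1$ est dans $\CE$; une r\'esolution injective finie de $M$ (finitude garantie par $\gldim A_0<\infty$), d\'ecal\'ee en degr\'e $d+1$, donne alors $M\langle -d-1\rangle\in\CE$. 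Pour $d<0$, la r\'ecurrence est sym\'etrique via la relation invers\'ee $(A_0 e)\langle -d+1\rangle\simeq (A_1 e)\langle -d\rangle[1]$ et des r\'esolutions projectives finies.

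Le point le plus d\'elicat est l'articulation, dans la premi\`ere partie, entre l'hypoth\`ese $\soc A\subset A_{>0}$ (qui annule les $\Hom$) et l'auto-injectivit\'e (qui annule les $\Ext$ sup\'erieurs), chacune prise isol\'ement \'etant insuffisante. Dans la seconde partie, c'est l'hypoth\`ese forte $A=A_0\oplus A_1$ qui rend la suite exacte courte minimale (seulement deux composantes homog\`enes) et permet le contr\^ole inductif sur $d$; en pr\'esence de composantes $A_i$ avec $i\ge 2$, les projectifs $Ae$ s'\'etaleraient sur plus de degr\'es et l'argument inductif devrait \^etre substantiellement enrichi.
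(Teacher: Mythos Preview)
Your argument for the second assertion (essential surjectivity) is correct and parallels the paper's proof closely.

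However, your proof of full faithfulness contains a genuine error. You claim that $\Hom_{A\mmodgr}(M,P_V\langle -d\rangle)=0$ for every $A_0$-module $M$ (viewed in graded degree $0$) and every $d\in\BZ$. This is false. Take $A=k[x]/(x^2)$ with $x$ in degree $1$: then $A\langle 1\rangle$ has its socle $kx$ sitting in graded degree $0$, and the inclusion $k\hookrightarrow A\langle 1\rangle$, $1\mapsto x$, is a nonzero graded $A$-map. Your socle argument only shows that the image must live in a degree strictly greater than $d$; for $d<0$ this is no contradiction. Since the right-orthogonality $R\Hom(F(M^\bullet),C^\bullet)=0$ you aim for simply does not hold, the approach cannot be repaired by a local fix. (Note also that your phrase ``aucune fl\`eche non nulle ne se factorise par un parfait'' is weaker than the orthogonality you then assert; the former alone would only give injectivity on $\Hom$, not surjectivity.)

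The paper takes a different route for full faithfulness: instead of establishing an orthogonality with perfect complexes, it computes the stable $\Hom$ directly via syzygies. For simple $A_0$-modules $M,N$ one has $\Hom_{A\mstabgr}(M,N[d])\simeq\Hom_{A\mstabgr}(M,\Omega^{-d}N)$, and the point is that $\Hom_{A\mModgr}(M,\Omega^{d}N)=0$ for $d>0$ because $\soc(\Omega^dN)$ lies in strictly positive graded degrees (this is where $\soc A\subset A_{>0}$ is used). Combined with the absence of simple projectives, this identifies the stable $\Hom$ with the derived $\Hom$ for all $d$, and one concludes by generation by simples.
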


\begin{proof}
Notons que $A$ n'a pas de
module simple projectif, puisque $\soc A\subset A_{>0}$.
Soient $M$ et $N$ deux $A_0$-modules simples. Soit $d>0$. Alors,
$\soc\Omega^d N$ est en degr\'es $>0$. Par cons\'equent,
$\Hom_{A\mModgr}(M,\Omega^d N)=0$.

On a $\Hom_{D^b(A\mModgr)}(M,N[d])\iso \Hom_{A\mstabgr}(M,\Omega^{-d}N)$.
Cet isomorphisme reste vrai pour $d=0$ car $A$ n'a pas de
module simple projectif. On en d\'eduit que
$\Hom_{D^b(A_0\mMod)}(M,N[d])\iso \Hom_{A\mstabgr}(M,\Omega^{-d}N)$ pour
tout $d$. Puisque les modules simples engendrent $D^b(A_0\mMod)$
comme cat\'egorie triangul\'ee, la premi\`ere partie du th\'eor\`eme est 
\'etablie.

\smallskip
Supposons maintenant $A=A_0\oplus A_1$ et $A_0$ est de dimension globale
finie. Alors, $A_1$ est un cog\'en\'erateur injectif.
Soit $\CT$ la sous-cat\'egorie \'epaisse de $D^b(A\mModgr)$ engendr\'ee
par $A_0$ et par les $A\langle i\rangle$ pour $i\in\BZ$.
On a $A_1\langle -1\rangle\simeq \Omega(A_0)\in\CT$, donc $M\langle -1\rangle\in\CT$
pour tout $M\in A_0\mMod$. Par r\'ecurrence, on en d\'eduit que
$M\langle -i\rangle\in\CT$ pour tout $i\ge 0$ et tout $M\in A_0\mMod$.

On a $A_1\in \CT$, donc $A_0\langle 1\rangle\simeq \Omega^{-1}(A_1)\in\CT$.
Par cons\'equent, 
$M\langle 1\rangle\in\CT$ pour tout $M\in A_0\mMod$ et
par r\'ecurrence, on d\'eduit que
$M\langle i\rangle\in\CT$ pour tout $i\ge 0$ et tout $M\in A_0\mMod$.

Finalement, $\CT=D^b(A\mModgr)$, donc $A\mstabgr$ est engendr\'ee par
$A_0$. Ceci montre que le foncteur
$D^b(A_0\mMod)\to A\mstabgr$ est une \'equivalence.
\end{proof}

Rappelons le r\'esultat suivant de Rickard \cite[Theorem 3.1]{Ri}:
\begin{thm}
\label{th:Rickard}
Soient $A_0$ et $B_0$ deux $k$-alg\`ebres de dimension finie et
$F:D^b(A_0\mMod)\iso D^b(B_0\mMod)$ une \'equivalence de cat\'egories
triangul\'ees. Alors il existe une \'equivalence de cat\'egories
triangul\'ees gradu\'ees $G:D^b(T(A_0)\mModgr)\iso D^b(T(B_0)\mModgr)$ rendant
le diagramme suivant commutatif
$$\xymatrix{
D^b(A_0\mMod)\ar[r]^-F_-{\sim} \ar[d]_\can &
D^b(B_0\mMod) \ar[d]^\can \\
D^b(T(A_0)\mModgr)\ar[r]_-G^{\sim} &  D^b(T(B_0)\mModgr)
}$$
\end{thm}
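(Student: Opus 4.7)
Le plan est d'utiliser le th\'eor\`eme de Morita d\'eriv\'e de Rickard pour repr\'esenter $F$ par un complexe basculant \`a deux c\^ot\'es, puis d'exploiter l'entrelacement du foncteur de Nakayama sous \'equivalences d\'eriv\'ees pour construire un complexe basculant gradu\'e entre $T(A_0)$ et $T(B_0)$.

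D'abord, j'invoquerai la version \`a deux c\^ot\'es du th\'eor\`eme de Rickard \cite{Ri} pour obtenir un complexe $X\in D^b(B_0\otimes A_0^\circ)$ tel que $F\simeq X\otimes_{A_0}^\BL-$, accompagn\'e d'un inverse $X^\vee\in D^b(A_0\otimes B_0^\circ)$ v\'erifiant $X\otimes_{A_0}^\BL X^\vee\simeq B_0$ et $X^\vee\otimes_{B_0}^\BL X\simeq A_0$. L'observation clef est ensuite que toute \'equivalence d\'eriv\'ee entre alg\`ebres de dimension finie entrelace les foncteurs de Nakayama: puisque le foncteur $\nu_{A_0}=A_0^*\otimes_{A_0}^\BL-$ s'identifie au foncteur de Serre de la sous-cat\'egorie des complexes parfaits de $D^b(A_0\mMod)$, notion intrins\`eque \`a la structure triangul\'ee, son image par $F$ est isomorphe \`a $\nu_{B_0}$. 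Ceci fournit un isomorphisme canonique
$$B_0^*\otimes_{B_0}^\BL X\iso X\otimes_{A_0}^\BL A_0^*$$
dans $D^b(B_0\otimes A_0^\circ)$.

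\`A partir de cet entrelacement, je construirais un complexe $\tilde{X}$ de $(T(B_0),T(A_0))$-bimodules gradu\'es en posant
$$\tilde{X}=T(B_0)\otimes_{B_0}^\BL X\simeq X\otimes_{A_0}^\BL T(A_0),$$
la graduation \'etant h\'erit\'ee de celle des extensions triviales $T(A_0)=A_0\oplus A_0^*\langle 1\rangle$ et $T(B_0)=B_0\oplus B_0^*\langle 1\rangle$. Je v\'erifierais alors que $\tilde{X}$ est un complexe basculant \`a deux c\^ot\'es gradu\'e, d'inverse construit sym\'etriquement \`a partir de $X^\vee$, et que l'\'equivalence $G=\tilde{X}\otimes_{T(A_0)}^\BL-$ convient. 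La commutativit\'e du diagramme r\'esulte du calcul: pour $M\in D^b(A_0\mMod)$,
$$G(\can(M))=\tilde{X}\otimes_{T(A_0)}^\BL(A_0\otimes_{A_0}^\BL M)\simeq X\otimes_{A_0}^\BL M=F(M)\simeq\can(F(M)),$$
le tout vu comme $T(B_0)$-module gradu\'e concentr\'e en degr\'e $0$ avec action nulle de $B_0^*$.

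L'obstacle principal est le passage de l'isomorphisme d'entrelacement, qui n'existe a priori que dans la cat\'egorie d\'eriv\'ee, \`a une structure stricte de bimodule gradu\'e sur $\tilde{X}$: il faut relever cet isomorphisme au niveau des complexes afin de d\'efinir de fa\c{c}on compatible les actions de $B_0^*$ \`a gauche et $A_0^*$ \`a droite sur $\tilde{X}$. Ceci s'obtient en choisissant une r\'esolution projective appropri\'ee de $X$ comme $(B_0,A_0)$-bimodule et en exploitant la rigidit\'e des complexes basculants; de mani\`ere \'equivalente, un enrichissement diff\'erentiel gradu\'e \`a la Keller permet de formaliser proprement cet argument. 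Une fois cette construction men\'ee \`a bien, la v\'erification du caract\`ere basculant de $\tilde{X}$ et de la commutativit\'e du diagramme est essentiellement formelle.
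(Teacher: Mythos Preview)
The paper does not prove this theorem: it is introduced with the phrase ``Rappelons le r\'esultat suivant de Rickard \cite[Theorem 3.1]{Ri}'' and is simply quoted as a known result. There is therefore no proof in the paper to compare your proposal against.

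That said, your sketch is a reasonable outline of how the result is established. The ingredients you identify---representing $F$ by a two-sided tilting complex, the intertwining of Nakayama functors via their characterisation as Serre functors on the perfect subcategory, and building from this a tilting object for the trivial extensions---are the correct ones. You are also right that the genuine content lies in promoting the derived-category isomorphism $B_0^*\otimes_{B_0}^\BL X\iso X\otimes_{A_0}^\BL A_0^*$ to a strict chain-level datum, so that the resulting left $B_0^*$-action and right $A_0^*$-action on $\tilde X$ are coherent and yield an honest graded $(T(B_0),T(A_0))$-bimodule (with the required vanishing $A_0^*\cdot A_0^*=0$ and $B_0^*\cdot B_0^*=0$ built in). Rickard's original argument proceeds via a careful choice of representative for the tilting complex; the dg-enhancement you mention is a legitimate modern alternative. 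Your proposal is thus correct in spirit, but be aware that for the purposes of this paper no proof is expected---the theorem is invoked as an external input.
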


\begin{cor}
\label{co:relevpos}
Soient $A_0$ une $k$-alg\`ebre de dimension finie et de dimension globale finie.
Soit $B$ une $k$-alg\`ebre sym\'etrique gradu\'ee ind\'ecomposable
telle que $B_{<0}=0$.

Toute \'equivalence triangul\'ee gradu\'ee
$T(A_0)\mstabgr\iso B\mstabgr$ se rel\`eve en une
\'equivalence triangul\'ee gradu\'ee
$D^b(T(A_0)\mModgr)\iso  D^b(B\mModgr)$.
\end{cor}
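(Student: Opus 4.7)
The strategy is to reduce the statement to Rickard's lifting theorem \ref{th:Rickard} by first identifying $B$ as a trivial extension of $B_0$ and then invoking Happel's description of the stable graded category in Theorem \ref{th:Happel}.

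First I would show that $B\simeq T(B_0)$ as graded symmetric algebras. Since $T(A_0)$ is concentrated in degrees $0$ and $1$, its length in the sense of \S\ref{dualite} satisfies $n_{T(A_0)}=1$. For a graded symmetric algebra the Nakayama duality $A^*\simeq A_\nu\langle n\rangle$ of Proposition \ref{dual} shows that the grading shift $\langle n_A\rangle$ is encoded in the Serre structure of $A\mstabgr$; therefore any graded triangulated equivalence $\Phi:T(A_0)\mstabgr\iso B\mstabgr$ forces $n_B=n_{T(A_0)}=1$. Together with the hypothesis $B_{<0}=0$, this gives $B=B_0\oplus B_1$, and Proposition \ref{pr:rectrivial} identifies $B$ with $T(B_0)$ as graded symmetric algebras.

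Next I would transfer finite global dimension from $A_0$ to $B_0$. The characterisation (iv) in Proposition \ref{dimproj} depends only on the graded triangulated structure of $T(A_0)\mstabgr$, so it is preserved by $\Phi$; applying Proposition \ref{dimproj} in the reverse direction to $B=T(B_0)$ shows that $B_0$ has finite global dimension. With these two reductions, Theorem \ref{th:Happel} supplies triangulated equivalences
$$H_A: D^b(A_0\mMod) \iso T(A_0)\mstabgr, \qquad H_B: D^b(B_0\mMod) \iso B\mstabgr.$$
Setting $F=H_B^{-1}\circ\Phi\circ H_A$ yields a (non-graded) triangulated equivalence $D^b(A_0\mMod)\iso D^b(B_0\mMod)$, and Rickard's theorem \ref{th:Rickard} lifts $F$ to a graded triangulated equivalence
$$G: D^b(T(A_0)\mModgr) \iso D^b(T(B_0)\mModgr) = D^b(B\mModgr)$$
compatible with the canonical inclusions $\iota_A,\iota_B$ from the ungraded derived categories of degree-zero parts.

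Finally, I would check that $G$ lifts $\Phi$. Since $T(A_0)$ and $B$ are self-injective, the stable graded category is the Verdier quotient of $D^b(\mathbin{-}\mathrm{Modgr})$ by the thick subcategory of perfect complexes, which is an intrinsic invariant of the triangulated structure and thus preserved by $G$; hence $G$ descends to a graded triangulated equivalence $\bar{G}:T(A_0)\mstabgr\iso B\mstabgr$. The commutative square of Theorem \ref{th:Rickard} and the definitions of $H_A,H_B$ give $\bar{G}\circ H_A \simeq \Phi\circ H_A$, and since $H_A$ is essentially surjective one concludes $\bar{G}\simeq\Phi$. The main obstacle is the first step: establishing $n_B=n_{T(A_0)}$ from an abstract graded triangulated equivalence rather than from a bimodule. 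Lemma \ref{invdual} is available only for graded equivalences of Morita type; bypassing it requires either recognising $\Phi$ as induced by a graded bimodule (via a stable-Morita classification for indecomposable symmetric algebras) or extracting $n_B$ directly from the Serre functor on $B\mstabgr$ through the duality of Proposition \ref{dual}.
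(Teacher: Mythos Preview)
Your approach is exactly the paper's: invoke the length invariance to get $n_B=1$, apply Proposition~\ref{pr:rectrivial} to identify $B\simeq T(B_0)$, and conclude via Theorems~\ref{th:Happel} and~\ref{th:Rickard}. The paper's proof is terser --- it simply cites Lemma~\ref{invdual} for $n_B=1$ and then the two theorems --- leaving implicit both the transfer of finite global dimension to $B_0$ (which you correctly extract from Proposition~\ref{dimproj}(iv), and which is needed for the Happel equivalence on the $B$-side) and the verification that the lift descends to the given $\Phi$.

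Your worry about Lemma~\ref{invdual} is well placed: as stated in the paper it assumes a graded bimodule, whereas the corollary speaks of an abstract graded triangulated equivalence. The paper does not address this, and in its only application (Theorem~\ref{th:relevhered}) the equivalence \emph{is} of Morita type, so this is likely the intended reading. Your suggested workaround via the Serre functor is the right idea if one wants the corollary in full generality: for $A$ symmetric, Proposition~\ref{dual} gives $A^*\simeq A\langle n_A\rangle$, hence the Serre functor of $A\mstabgr$ is $\Omega\langle n_A\rangle[1]$ (cf.\ the proof of Theorem~\ref{th:relevhered}, where the Serre functor is identified as $\Omega\langle 1\rangle$), and any triangulated equivalence commuting with $\langle 1\rangle$ intertwines Serre functors, forcing $n_A=n_B$.
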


\begin{proof}
Le lemme \ref{invdual} montre que $B_i=0$ pour $i>1$ et que
$B$ admet une forme sym\'etrisante $t_B:N\to k\langle -1\rangle$.
La proposition \ref{pr:rectrivial} fournit alors un isomorphisme
d'alg\`ebres gradu\'ees $B\iso T(B_0)$.
Les th\'eor\`emes \ref{th:Happel} et \ref{th:Rickard} fournissent
la conclusion.
\end{proof}

Rappelons qu'une cat\'egorie ab\'elienne $\CC$ est h\'er\'editaire si
$\Ext^i_\CC=0$ pour $i\ge 2$. Si $\CC$ est une cat\'egorie $k$-lin\'eaire
dont les $\Hom$ sont de dimension finie, on appelle foncteur de Serre
un foncteur $S:\CC\to\CC$ muni d'isomorphismes bifonctoriels
$$\Hom(M,N)^*\iso \Hom(N,S(M))$$
pour $M,N\in\CC$.

Le r\'esultat suivant est proche de r\'esultats d\^us \`a Asashiba
\cite{As1,As2} (Asashiba suppose que $T(A_0)$ est de type de
repr\'esentation fini).

\begin{thm}
\label{th:relevhered}
Soit $A_0$ une $k$-alg\`ebre de dimension finie ind\'ecomposable.
Supposons qu'il existe
une cat\'egorie ab\'elienne h\'er\'editaire $\CC$ telle que
$D^b(A_0\mMod)\simeq D^b(\CC)$. Alors, $T(A_0)$ admet des rel\`evements
d'\'equivalences stables.
\end{thm}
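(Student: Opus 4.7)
Le plan est de ramener l'\'enonc\'e au corollaire~\ref{co:relevpos}, en transf\'erant d'abord la graduation canonique de $T(A_0)$ sur l'alg\`ebre partenaire via le th\'eor\`eme~\ref{th:gradstable}.

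Consid\'erons donc une \'equivalence stable de type de Morita entre $T(A_0)$ et une $k$-alg\`ebre sym\'etrique ind\'ecomposable $B$, r\'ealis\'ee par un bimodule $L$. L'alg\`ebre $T(A_0)$ \'etant munie de sa graduation naturelle (concentr\'ee en degr\'es $0$ et $1$), la premi\`ere \'etape consistera \`a appliquer le th\'eor\`eme~\ref{th:gradstable} pour obtenir une graduation sur $B$ et une structure de bimodule gradu\'e sur $L$ induisant une \'equivalence stable gradu\'ee $T(A_0)\mstabgr \iso B\mstabgr$.

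Le lemme~\ref{invdual} donnera alors $n_B=n_{T(A_0)}=1$, de sorte que $B$ sera concentr\'ee en deux degr\'es cons\'ecutifs ; quitte \`a d\'ecaler par un cocaract\`ere du centre, on pourra supposer $B_{<0}=0$, soit $B=B_0\oplus B_1$. Par ailleurs, l'hypoth\`ese $D^b(A_0\mMod)\simeq D^b(\CC)$ avec $\CC$ h\'er\'editaire entra\^{\i}ne la finitude de la dimension globale de $A_0$, puisque tout objet de $D^b(\CC)$ se d\'ecompose en somme directe de ses objets de cohomologie d\'ecal\'es, ce qui borne uniform\'ement les $\Ext^i_{A_0}(S,T)$ sur les simples. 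Les hypoth\`eses du corollaire~\ref{co:relevpos} seront ainsi toutes r\'eunies et fourniront un rel\`evement en une \'equivalence triangul\'ee gradu\'ee $D^b(T(A_0)\mModgr)\iso D^b(B\mModgr)$. Il suffira enfin d'oublier les graduations pour obtenir l'\'equivalence d\'eriv\'ee cherch\'ee, rel\'evant l'\'equivalence stable de d\'epart.

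Le point le plus d\'elicat sera la compatibilit\'e entre le transfert de graduation et l'\'equivalence stable initiale~: il faudra v\'erifier que le bimodule gradu\'e construit par le th\'eor\`eme~\ref{th:gradstable}, une fois la graduation oubli\'ee, redonne le bimodule $L$ de d\'epart. Ceci r\'esulte de la construction m\^eme de ce bimodule gradu\'e via la proposition~\ref{graduable}, qui \'etend un module v\'erifiant $A_{\sigma(x)}\otimes_A M\simeq M$ en un module gradu\'e de mani\`ere unique (\`a torsion par un caract\`ere pr\`es). Le c{\oe}ur technique du rel\`evement --- l'identification $T(A_0)\mstabgr\simeq D^b(A_0\mMod)$ du th\'eor\`eme~\ref{th:Happel}, combin\'ee au rel\`evement gradu\'e de Rickard (th\'eor\`eme~\ref{th:Rickard}) --- est d\'ej\`a encapsul\'e dans le corollaire~\ref{co:relevpos} et n'aura pas \`a \^etre repris.
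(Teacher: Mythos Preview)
Your argument has a real gap at the positivity step. After transferring the grading to $B$ via Theorem~\ref{th:gradstable}, you invoke Lemma~\ref{invdual} to get $n_B=1$ and then propose to ``shift by a central cocharacter'' so that $B_{<0}=0$. Neither move works. The grading on $B$ furnished by Theorem~\ref{th:gradstable} depends on a choice of lift $\tau:\BG_m\to\Aut(B)$ of the cocharacter $\bar\tau:\BG_m\to\Out^0(B)$; different lifts differ by inner cocharacters, and as explained in \S\ref{se:chgtgrad} this replaces $(B,\tau)$ by $\Endgr_{(B,\tau)}(\bigoplus_V P_V\langle d_V\rangle)$, which in general spreads the degrees out. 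Already for $B=T(kA_2)$ with the standard grading, the choice $d_1=0,\ d_2=1$ puts the resulting graded algebra in degrees $\{-1,0,1,2\}$, while the graded Morita bimodule $P_1\oplus P_2\langle 1\rangle$ still induces a graded stable equivalence with $T(kA_2)$ in degrees $\{0,1\}$. (What is actually invariant under a graded stable equivalence is $\inf+\sup$, not $\sup-\inf$; in the paper Lemma~\ref{invdual} is only applied inside Corollary~\ref{co:relevpos}, where $B_{<0}=0$ is already assumed and the two quantities coincide.) As for the ``shift'': $1\in B_0$ pins down degree~$0$, and conjugation by a central cocharacter is trivial, so no such shift exists. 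Nothing in your outline produces a lift with $B_{<0}=0$.

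This is exactly why the paper proceeds differently and why the hereditary hypothesis is essential, not merely a source of $\gldim A_0<\infty$. The paper composes the graded stable equivalence with Happel's equivalence and the hypothesis to obtain $F:B\mstabgr\iso D^b(\CC)$, identifies the Serre functor of $B\mstabgr$ with $\Omega\langle 1\rangle$, and then verifies criterion~(iii) of Proposition~\ref{caracpositivite}: for every cycle $S_0,\dots,S_{n-1},S_n=S_0$ of simple $B$-modules with $\Ext^1(S_i,S_{i+1}\langle -d_i\rangle)\neq0$ one must show $\sum_i d_i\ge0$. Writing $F(S_i)\simeq M_i[n_i]$ with $M_i\in\CC$ (possible precisely because $\CC$ is hereditary) and applying Lemma~\ref{le:Serrehered} to the Serre functor of $D^b(\CC)$ gives $n_i-n_{i+1}+2d_i\ge0$; summing over the cycle yields $\sum_i d_i\ge0$. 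Proposition~\ref{caracpositivite} then guarantees a lift of $\bar\tau$ with $B_{<0}=0$, after which Corollary~\ref{co:relevpos} applies. Your route bypasses this argument entirely, and without it the conclusion does not follow.
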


\begin{proof}
Soit $B$ une $k$-alg\`ebre sym\'etrique ind\'ecomposable
munie d'une \'equivalence stable de type de Morita avec $T(A_0)$. D'apr\`es
le th\'eor\`eme \ref{Out0stable}, il existe une graduation sur $B$ compatible
avec l'\'equivalence. En composant avec l'\'equivalence du th\'eor\`eme
\ref{th:Happel} et celle fournie par l'hypoth\`ese, on obtient une
\'equivalence
$$F:B\mstabgr\iso D^b(\CC).$$

 Soient $S_n=S_0,S_1,\ldots,S_{n-1}$ des $B$-modules
simples de degr\'e $0$ et soient $d_0,\ldots,d_{n-1}\in\BZ$ tels que
$\Ext^1(S_i,S_{i+1}\langle -d_i\rangle)\not=0$. Puisque $F(S_i)$
est un objet ind\'ecomposable de $D^b(\CC)$ et que $\CC$ est
h\'er\'editaire, il existe un entier $n_i$ tel que 
$F(S_i)\simeq M_i[n_i]$, o\`u $M_i=H^{-n_i}(F(S_i))\in\CC$.

Le foncteur de Serre de $B\mstabgr$ est $\Omega\langle 1\rangle$. On a
donc
\begin{align*}
\Ext^1(S_i,S_{i+1}\langle -d_i\rangle)&\simeq
\Hom_{B\mstabgr}(S_i,\Omega^{d_i-1}S^{-d_i}(S_{i+1}))\\
&\simeq
\Hom_{D^b(\CC)}(M_i,S^{-d_i}(M_{i+1})[n_{i+1}-n_i+1-d_i]).
\end{align*}

Le lemme \ref{le:Serrehered} ci-dessous montre que l'homologie de
$S^{-d_i}(M_{i+1})$ s'annule en degr\'es $>d_i$. Puisque l'espace des
$\Hom$ ci-dessus est non nul, on en d\'eduit que $n_i-n_{i+1}+2d_i\ge 0$.
Par cons\'equent,
$\sum_{i=0}^{n-1} d_i\ge 0$. La proposition
\ref{caracpositivite} montre alors que la graduation sur $B$ peut \^etre choisie
en degr\'es $\ge 0$. Le corollaire \ref{co:relevpos} fournit la conclusion.
\end{proof}

\begin{lemme}
\label{le:Serrehered}
Soit $\CC$ une cat\'egorie ab\'elienne h\'er\'editaire et $S$ un
foncteur de Serre pour $D^b(\CC)$. Soit $M\in D^b(\CC)$, $r\in\BZ$ et
$d\in \BZ_{\ge 0}$.
Si $H^i(M)=0$ pour $i>r$, alors $H^i(S^{-d}(M))=0$ pour $i>r+d$.
\end{lemme}

\begin{proof}
Soit $n$ maximum tel que $N=H^n(S^{-1}M)\not=0$. 
On a $\Hom(S^{-1}M,N[-n])\simeq \Hom(N,M[n])^*\not=0$. Par cons\'equent,
$r-n\ge -1$. On en d\'eduit le lemme par r\'ecurrence sur $d$.
\end{proof}

\subsection{Alg\`ebres ext\'erieures}
\subsubsection{}
Soit $k$ un corps alg\'ebriquement clos et $V$ un espace vectoriel
de dimension finie sur
$k$. Soit $G$ un groupe fini d'ordre inversible dans $k$ et
$\rho:G\to \GL(V)$ une repr\'esentation.

Soit $A=\Lambda(V)\rtimes G$ le produit crois\'e:
$A=\Lambda(V)\otimes kG$ comme espace vectoriel, $\Lambda(V)\otimes k$
et $k\otimes kG$ sont des sous-alg\`ebre et $gvg^{-1}=g(v)$ pour
$g\in G$ et $v\in V$.
L'alg\`ebre $A$ est gradu\'ee: $kG$ est en degr\'e $0$ et $V$ en degr\'e $1$.

\smallskip
Fixons un isomorphisme d'espaces vectoriels
$\Lambda^nV\iso k$, o\`u $n=\dim V$.
Soit $\nu\in\GL(V)$ la multiplication par $(-1)^{n+1}$. On note
encore $\nu$ l'automorphisme d'alg\`ebre induit de $\Lambda(V)$.
On \'etend enfin $\nu$ \`a un automorphisme d'alg\`ebre de $A$ par
$\nu(g)=\det(g)g$.

\smallskip
On d\'efinit la forme lin\'eaire $t:A\to k\langle -n\rangle$ par
$t(x\otimes g)=\delta_{1g}t(x)$ pour $g\in G$ et $x\in \Lambda^n V$ et
par $t(a)=0$ si $a\in A_{<n}$. On a $t(ab)=t(b\nu(a))$ pour
$a,b\in A$.

On a un accouplement parfait
$$A\times A\to k\langle -n\rangle,\ \ (a,b)\mapsto t(ab)$$
et
$$\hat{t}:A_\nu\langle n\rangle\to A^*,\ \ b\mapsto (a\mapsto t(ab))$$
est un isomorphisme de $A^\en$-modules gradu\'es.
En particulier, $A$ est une alg\`ebre de Frobenius.

\begin{prop}
L'alg\`ebre $A$ est sym\'etrique si et seulement si
$\rho(G)\le\SL(V)$ et $(-1)^{\dim V+1}\in \rho(G)$.
\end{prop}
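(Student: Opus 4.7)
The plan is to use the standard Frobenius-algebra criterion that $A$, equipped with the form $t$ constructed just before the proposition, is symmetric if and only if its Nakayama automorphism $\nu$ is an inner automorphism. Since $\nu$ has already been computed explicitly --- it acts as $(-1)^{n+1}\id$ on $V$ and as $g \mapsto \det(\rho(g))\,g$ on $kG$ --- the problem reduces to characterising when this specific $\nu$ lies in $\Int(A)$. I would organise the argument around two filtration levels of $A$ with respect to the radical $JA$ (which is generated by $V$), and handle necessity and sufficiency separately.

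For necessity, suppose $\nu = \Int(u)$ with $u \in A^\times$. Reducing modulo $JA$ yields an inner automorphism of the semisimple algebra $A/JA = kG$ sending $g$ to $\det(\rho(g))\,g$; applied to the trivial-character central idempotent $e = \frac{1}{|G|}\sum_g g$, which is fixed by every inner automorphism, one obtains $e = \frac{1}{|G|}\sum_g \det(\rho(g))\,g$, forcing $\det\circ\rho \equiv 1$ on $G$, i.e.\ $\rho(G) \subseteq \SL(V)$. Next, write $u = u_0 + u_+$ with $u_0 \in (kG)^\times$ and $u_+ \in JA$. The identity $uvu^{-1} = (-1)^{n+1}v$ for $v \in V$, read off at leading order in $JA/J^2A \simeq V \otimes kG$, reads $u_0 v u_0^{-1} = (-1)^{n+1}v$; expanding $u_0 = \sum_h a_h h$ and using the commutation relation $hv = \rho(h)(v)\,h$, the linear independence of group elements yields $a_h\,(\rho(h) - (-1)^{n+1}\id) = 0$ for every $h$. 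Since $u_0$ is nonzero, at least one $h$ satisfies $\rho(h) = (-1)^{n+1}\id$, which gives $(-1)^{n+1} \in \rho(G)$.

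For sufficiency I would construct a symmetrising form $\tau$ directly. When $n$ is odd, one verifies that $t$ itself is symmetric: writing $t_\Lambda(\omega \wedge g(\mu)) = (-1)^{|\omega||\mu|}\,g(\mu)\wedge \omega = (-1)^{|\omega||\mu|}\,g(\mu \wedge g^{-1}(\omega))$ and using $t_\Lambda \circ g = \det(g)\,t_\Lambda$, the sign $(-1)^{|\omega||\mu|}\det(g)$ equals $1$ for all bidegrees $|\omega|+|\mu|=n$ exactly when $n$ is odd and $\det\rho \equiv 1$, giving $t(ab)=t(ba)$. When $n$ is even one picks $g_0 \in G$ with $\rho(g_0) = -\id$ and considers $\tau(a) := t(a g_0)$; via the Nakayama identity $t(xy) = t(y\nu(x))$, the desired symmetry $\tau(ab)=\tau(ba)$ reduces to the relation $g_0\,\nu(a) = a\,g_0$ for all $a \in A$, which is immediate on $v \in V$ from $g_0 v g_0^{-1} = \rho(g_0)(v) = -v = \nu(v)$ and, on $h \in G$, from $\det\rho \equiv 1$ (so $\nu|_G = \id$) together with the conjugation $g_0 h g_0^{-1} = h$. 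The most delicate point, which I expect to be the main obstacle, is exactly this last conjugation: $-\id$ is central in $\rho(G)$, so $g_0$ is central modulo $\ker\rho$, but $g_0$ itself need not commute with every element of $G$; the resolution is either to use faithfulness of the relevant action to deduce that $g_0$ can be chosen in $Z(G)$, or more generally to replace $g_0$ by an appropriate unit $u$ supported on the coset $g_0\ker\rho$ (exploiting that elements of $\ker\rho$ act trivially on $\Lambda V$) so that $\Int(u) = \nu$ holds on the nose.
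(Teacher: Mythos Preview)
Your necessity argument is correct and matches the paper's: the paper observes that $\nu$ restricted to $A_0=kG$ is inner, hence fixes the centre, which forces $\det\circ\rho\equiv 1$; your trivial-character idempotent makes this explicit. The leading-order computation producing an element $h$ with $\rho(h)=(-1)^{n+1}$ is identical to the paper's.

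For sufficiency you have located a genuine gap, and it is present in the paper's own proof as well. The paper picks $g\in G$ with $\rho(g)=(-1)^{n+1}$ and asserts $\nu=\ad(g)$; but $\ad(g)(h)=ghg^{-1}$ while $\nu(h)=h$ once $\rho(G)\le\SL(V)$, so one needs $g\in Z(G)$. This is automatic when $\rho$ is injective (then $g$ is the unique preimage of a central scalar), and that is the correct fix.

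Your alternative fix --- finding a unit supported on $g_0\ker\rho$ --- does not work, and in fact the proposition is false without faithfulness. Take $\operatorname{char}k\neq 2,3$, $V=k^2$, $G=\GS_3=\langle a,b\mid a^2=b^3=1,\ aba=b^{-1}\rangle$, and $\rho=\epsilon\otimes\id_V$ with $\epsilon$ the sign character. Then $\rho(G)=\{\pm\id\}\le\SL(V)$ and $(-1)^{n+1}=-1\in\rho(G)$. Here $\nu(v)=-v$, $\nu(h)=h$, so any $u\in A^\times$ with $\ad(u)=\nu$ centralises $k\GS_3$; its image $\bar u\in(k\GS_3)^\times$ lies in $Z(k\GS_3)$ and satisfies $\bar u v=-v\bar u$. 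Writing $\bar u=\alpha+\beta(b+b^2)+\gamma(a+ab+ab^2)$ and using $bv=vb$, $av=-va$, one gets $\alpha=\beta=0$, so $\bar u$ is a scalar multiple of $z=a+ab+ab^2$; but $z^2=3(1+b+b^2)$ is a multiple of a proper idempotent, hence $z\notin(k\GS_3)^\times$. Thus $\nu$ is not inner and $A$ is not symmetric. So retain your first resolution: add the hypothesis that $\rho$ is faithful, which the paper does impose in its subsequent applications.
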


\begin{proof}
L'alg\`ebre $A$ est sym\'etrique si et seulement si $\nu$ est int\'erieur.

Supposons $\rho(G)\le\SL(V)$ et soit $g\in G$ tel que
$\rho(g)=(-1)^{n+1}\in G$. Alors, $\nu=\ad(g)$.

Supposons $\nu$ int\'erieur. L'action de $\nu$ sur $A_0$ est int\'erieure, donc
$\rho(G)\le\SL(V)$. Supposons $n$ pair.
Soit $a\in A^\times$ tel que $\ad(a)=\nu$.
L'action de $\ad(a)$ sur $V\iso A_{\le 1}/A_0$
ne d\'epend que de l'image $\bar{a}$ de $a$ dans $(kG)^\times\iso
A^\times/(1+JA)$. On a $\rho(\bar{a})=(-1)^{n+1}$. D\'ecomposons
$\bar{a}=\sum_{g\in G}\alpha_g g$ avec $\alpha_g\in k$. Soit $v\in V$. On a
$\ad(a)(v)=-v$, donc $av=-va$, \ie,
$\sum_g \alpha_g g(v)\otimes g=-\sum_g \alpha_g v\otimes g$ et finalement
$g(v)=-v$ si $\alpha_g{\not=}0$. Soit $g\in G$ tel que
$\alpha_g{\not=0}$. Alors, $\rho(g)=-1$.
\end{proof}

\begin{question}
\label{qu:relevext}
Supposons $\rho(G)\le\SL(V)$ et $(-1)^{\dim V+1}\in\rho(G)$. Est-ce que
$\Lambda(V)\rtimes G$ satisfait les rel\`evements d'\'equivalences
stables?
\end{question}

La question pr\'ec\'edente est importante pour la th\'eorie des
repr\'esentations modulaires des groupes finis. Supposons $k$ 
de caract\'eristique $2$. 
Soit $P$ un $2$-groupe ab\'elien \'el\'ementaire et $G$ un groupe
d'automorphismes d'ordre impair de $P$.
La construction du \S \ref{se:blocs} fournit un isomorphisme de $k$-alg\`ebres
$kP\rtimes G\simeq \Lambda(V)\rtimes G$, o\`u
$V=P\otimes_{\BF_2}k$. Par cons\'equent, une r\'eponse affirmative \`a la
question \ref{qu:relevext} pour $(V,G)$ implique une r\'eponse affirmative
\`a la question \ref{qu:relevabel} pour $(P,G)$.

\subsubsection{Dimension $2$}
Supposons maintenant $\rho$ injective, $\rho(G)\le\SL(V)$,
$G$ n'est pas un groupe cyclique d'ordre impair et
la caract\'eristique de $k$
n'est pas $2$. Il existe alors $\sigma\in G$ tel que
$\rho(\sigma)=-1$.

Soit $I$ l'ensemble des classes d'isomorphisme de repr\'esentations
irr\'eductibles de $G$ sur $k$. 
Soit $I_0$ le sous-ensemble
de $I$ des repr\'esentations triviales sur $\sigma$.
Soit $\Delta$ le carquois de sommets $I$ avec $\dim\Hom(\chi,\psi\otimes
\rho)$ fl\`eches de $\psi$ vers $\chi$, lorsque $\psi\in I-I_0$ et
$\chi\in I$.

\begin{lemme}
\label{le:extastrivial}
On a un isomorphisme d'alg\`ebres
$\Lambda(V)\rtimes G\simeq T(C)$, o\`u $C$ est Morita-\'equivalente
\`a l'alg\`ebre de carquois de $\Delta$.
\end{lemme}

\begin{proof}
Soit $e_\chi$ l'idempotent primitif
de $Z(kG)$ associ\'e \`a $\chi\in I$.  On d\'efinit
$A'=\End_A(\bigoplus_{\chi\in I_0} Ae_\chi\oplus
\bigoplus_{\chi\in I-I_0} Ae_\chi\langle 1\rangle)$. On a
$A'=A$ comme alg\`ebres non gradu\'ees, mais $A'$ a une graduation
diff\'erente. On a une \'equivalence de cat\'egories
ab\'eliennes gradu\'ees $A'\mModgr\iso A\mModgr$ (\S \ref{se:chgtgrad}).
On a $A'=A'_0\oplus A'_2$. On consid\`ere pour finir la
graduation sur $B=A$ obtenue en divisant les degr\'es par $2$:
$B_0=A'_0$ et $B_1=A'_2$ et $B$ a une forme sym\'etrisante
$B\to k\langle -1\rangle$. L'alg\`ebre $B_0$ est Morita-\'equivalente
\`a l'alg\`ebre du carquois $\Delta$.
D'apr\`es la proposition \ref{pr:rectrivial}, on a $B\simeq T(B_0)$.
\end{proof}

\begin{thm}
\label{th:relevext}
Si $\dim V=2$, $k$ n'est pas de caract\'eristique $2$,
$\rho$ est injective
et $G\le\SL(V)$ n'est pas un groupe cyclique d'ordre impair, alors
$\Lambda(V)\rtimes G$ satisfait les rel\`evements d'\'equivalences
stables.
\end{thm}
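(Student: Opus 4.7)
The plan is to combine Lemma~\ref{le:extastrivial} with Theorem~\ref{th:relevhered}. Under the hypotheses, the condition that $G\le\SL(V)$ is not cyclic of odd order guarantees that $-\Id_V\in\rho(G)$, i.e.\ that there is $\sigma\in G$ with $\rho(\sigma)=-1$. Lemma~\ref{le:extastrivial} then supplies an isomorphism of algebras
$$\Lambda(V)\rtimes G\simeq T(C),$$
where $C$ is Morita-\'equivalente \`a l'alg\`ebre du carquois $\Delta$.

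The key step is to observe that in the dimension~$2$ situation $\Delta$ is acyclic, so that its path algebra is h\'er\'editaire. Indeed, an arrow from $\psi$ to $\chi$ exists only when $\psi\in I-I_0$, i.e.\ $\psi(\sigma)=-1$, and when $\chi$ is a constituent of $\psi\otimes\rho$. Computing the central character at $\sigma$ gives $(\psi\otimes\rho)(\sigma)=\psi(\sigma)\rho(\sigma)=(-1)(-1)=1$, so $\chi\in I_0$. Every arrow therefore goes from $I-I_0$ to $I_0$; the quiver $\Delta$ is bipartite and, a fortiori, has no oriented cycle. Hence $k\Delta$ is h\'er\'editaire and, via the Morita equivalence,
$$D^b(C\mMod)\simeq D^b(k\Delta\mMod)=D^b(\CC),$$
where $\CC=k\Delta\mMod$ is a cat\'egorie ab\'elienne h\'er\'editaire.

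It then suffices to invoke Theorem~\ref{th:relevhered} applied to $A_0=C$. Any indecomposable direct factor of $\Lambda(V)\rtimes G\simeq T(C)$ corresponds to a connected component of $\Delta$ (equivalently, to an indecomposable factor of $C$), and each such factor is of the form $T(C')$ with $C'$ d\'eriv\'e-\'equivalente \`a l'alg\`ebre du carquois connexe associ\'e; the hypotheses of Theorem~\ref{th:relevhered} are fulfilled for each such $C'$, and the conclusion transfers to $\Lambda(V)\rtimes G$.

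The only non-routine step is the verification that $\Delta$ is acyclic, and the computation of central characters above settles it; the rest is assembly of previously established results.
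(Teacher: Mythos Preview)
Your proposal is correct and follows exactly the paper's approach: invoke Lemma~\ref{le:extastrivial} to write $\Lambda(V)\rtimes G\simeq T(C)$ with $C$ Morita equivalent to $k\Delta$, then apply Theorem~\ref{th:relevhered}. The paper's own proof is a single sentence citing these two results; your contribution is to make explicit the one point the paper leaves tacit, namely that $\Delta$ is acyclic (hence $k\Delta$ is hereditary), via the central-character computation at $\sigma$ showing all arrows run from $I\setminus I_0$ to $I_0$. Your handling of indecomposability is also fine, though in fact $\Lambda(V)\rtimes G$ is itself indecomposable here (its Ext-quiver is the McKay graph, which is connected for a faithful $G\le\SL_2$), so the block-by-block discussion is not strictly needed.
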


\begin{proof}
Le th\'eor\`eme r\'esulte du lemme \ref{le:extastrivial} ci-dessous et
du th\'eor\`eme \ref{th:relevhered}.
\end{proof}

\begin{rem}
Une alg\`ebre de Brauer associ\'ee \`a une ligne avec multiplicit\'e
$1$ est isomorphe \`a une extension triviale de l'alg\`ebre d'un carquois
de type $A$ o\`u aucun sommet n'est la source (resp. le but) de deux fl\`eches
distinctes \cite[Proposition 8.1]{Bog}. Le th\'eor\`eme
\ref{th:relevhered} montre alors qu'une telle alg\`ebre de Brauer (et donc
toute alg\`ebre de Brauer \`a multiplicit\'e triviale) satisfait
la propri\'et\'e de rel\`evement. C'est un r\'esultat classique et l'approche
d'Asashiba \cite{As1} de ce r\'esultat est proche de celle que nous
avons adopt\'ee.
\end{rem}

\end{document}